\documentclass[11pt,oneside,reqno]{article}

\usepackage[a4paper,width=15cm,height=24cm]{geometry}

\usepackage{amsmath,amsthm,amssymb}
\usepackage{bbm}
\usepackage{mathrsfs}

\usepackage{natbib}
\usepackage{color}
\usepackage[dvipsnames]{xcolor}
\usepackage{tikz}
\usepackage{tikz-cd}
\usetikzlibrary{shapes.geometric}
\usepackage{subcaption}
\usepackage{caption}

\usepackage{filemod}
\usepackage[breaklinks=true]{hyperref}

\theoremstyle{plain}
\newtheorem{theorem}{Theorem}[section]
\newtheorem{proposition}[theorem]{Proposition}
\newtheorem{lemma}[theorem]{Lemma}

\theoremstyle{definition}

\let\oldproof\proof
\renewcommand{\proof}{%
    \setcounter{proofclaim}{0}%
    \oldproof%
}

\makeatletter

\def\given{\typeout{Command 'given' should only be used within bracket command}}
\newcounter{@bracketlevel}
\def\@bracketfactory#1#2#3#4#5#6{
\expandafter\def\csname#1\endcsname##1{%
\addtocounter{@bracketlevel}{1}%
\global\expandafter\let\csname @middummy\alph{@bracketlevel}\endcsname\given%
\global\def\given{\mskip#5\csname#4\endcsname\vert\mskip#6}\csname#4l\endcsname#2##1\csname#4r\endcsname#3%
\global\expandafter\let\expandafter\given\csname @middummy\alph{@bracketlevel}\endcsname
\addtocounter{@bracketlevel}{-1}}%
}
\def\bracketfactory#1#2#3{%
\@bracketfactory{#1}{#2}{#3}{relax}{1mu plus 0.25mu minus 0.25mu}{0.6mu plus 0.15mu minus 0.15mu}
\@bracketfactory{b#1}{#2}{#3}{big}{1mu plus 0.25mu minus 0.25mu}{0.6mu plus 0.15mu minus 0.15mu}
\@bracketfactory{bb#1}{#2}{#3}{Big}{2.4mu plus 0.8mu minus 0.8mu}{1.8mu plus 0.6mu minus 0.6mu}
\@bracketfactory{bbb#1}{#2}{#3}{bigg}{3.2mu plus 1mu minus 1mu}{2.4mu plus 0.75mu minus 0.75mu}
\@bracketfactory{bbbb#1}{#2}{#3}{Bigg}{4mu plus 1mu minus 1mu}{3mu plus 0.75mu minus 0.75mu}
}
\bracketfactory{clc}{\lbrace}{\rbrace}
\bracketfactory{clr}{(}{)}
\bracketfactory{cls}{[}{]}
\bracketfactory{abs}{\lvert}{\rvert}
\bracketfactory{norm}{\Vert}{\Vert}
\bracketfactory{floor}{\lfloor}{\rfloor}
\bracketfactory{ceil}{\lceil}{\rceil}
\bracketfactory{angle}{\langle}{\rangle}

\newcounter{ctr}\loop\stepcounter{ctr}\edef\X{\@Alph\c@ctr}%
	\expandafter\edef\csname s\X\endcsname{\noexpand\mathscr{\X}}
	\expandafter\edef\csname c\X\endcsname{\noexpand\mathcal{\X}}
	\expandafter\edef\csname b\X\endcsname{\noexpand\boldsymbol{\X}}
	\expandafter\edef\csname I\X\endcsname{\noexpand\mathbbm{\X}}
	\expandafter\edef\csname r\X\endcsname{\noexpand\mathrm{\X}}
\ifnum\thectr<26\repeat

\newcount\minute
\newcount\hour
\newcount\hourMins
\def\now{%
\minute=\time%
\hour=\time \divide \hour by 60%
\hourMins=\hour \multiply\hourMins by 60%
\advance\minute by -\hourMins%
\zeroPadTwo{\the\hour}:\zeroPadTwo{\the\minute}%
}
\def\zeroPadTwo#1{\ifnum #1<10 0\fi#1}

\numberwithin{equation}{section}
\allowdisplaybreaks[4]

\renewcommand\section{\@startsection {section}{1}{\z@}%
{-3.5ex \@plus -1ex \@minus -.2ex}%
{1.3ex \@plus.2ex}%
{\center\small\sc\mathversion{bold}\MakeUppercase}}

\def\subsection#1{\@startsection {subsection}{2}{0pt}%
{-3.5ex \@plus -1ex \@minus -.2ex}%
{1ex \@plus.2ex}%
{\bf\mathversion{bold}}{#1}}

\def\subsubsection#1{\@startsection{subsubsection}{3}{0pt}%
{\medskipamount}%
{-10pt}%
{\normalsize\itshape}{\kern-2.2ex. #1.}}

\def\blfootnote{\xdef\@thefnmark{}\@footnotetext}

\makeatother

\renewcommand{\cite}{\citet}

\def\^#1{\ifmmode {\mathaccent"705E #1} \else {\accent94 #1} \fi}
\def\~#1{\ifmmode {\mathaccent"707E #1} \else {\accent"7E #1} \fi}

\edef\-#1{\noexpand\ifmmode {\noexpand\bar{#1}} \noexpand\else \-#1\noexpand\fi}
\def\>#1{\vec{#1}}
\def\.#1{\dot{#1}}

\def\atop{\@@atop}

\renewcommand{\leq}{\leqslant}
\renewcommand{\geq}{\geqslant}
\renewcommand{\phi}{\varphi}

\newcommand{\Var}{\mathop{\mathrm{Var}}\nolimits}

\usepackage{nomencl}
\makenomenclature

\usepackage{mwe}
\usepackage{enumitem}
\usepackage{xcolor}
\usepackage{mathtools} 

\newcommand{\R}{\mathbb{R}}
\newcommand{\N}{\mathbb{N}}
\newcommand{\E}{\mathbb{E}}

\renewcommand{\Var}{\operatorname{Var}}

\renewcommand{\P}{\mathbb{P}}

\DeclareMathOperator*{\argmin}{arg\,min}

\newtheorem*{theorem*}{Theorem}
\newtheorem*{lemma*}{Lemma}
\usepackage[normalem]{ulem}
\usepackage{authblk}
\newcommand{\Z}{\mathbb{Z}}
\DeclareMathOperator*{\argmax}{arg\,max}

\newcommand{\proofappendixheading}[1]{\bigskip\noindent\textbf{#1}\par\medskip}
\newcommand{\proofappendixstatement}[2]{\noindent\emph{#1.} #2\par\medskip}

\makeatletter
\newcommand{\MSC}[1]{%
  \begingroup
    \renewcommand\thefootnote{}
    \footnote{\textbf{2020 Mathematics Subject Classification.} #1}%
    \addtocounter{footnote}{-1}
  \endgroup
}
\newcommand{\keywords}[1]{%
  \begingroup
    \renewcommand\thefootnote{}%
    \footnote{\textbf{Key words and phrases.} #1}%
    \addtocounter{footnote}{-1}%
  \endgroup
}
\makeatother

\title{Self-Balancing Sequential Sampling: Fast Convergence with Controlled Predictability}
\author[1]{Zachary McNulty\footnote{Email: \href{mailto:zachary_mcnulty@berkeley.edu}{zachary\_mcnulty@berkeley.edu}}}
\author[2]{Daniel Raban\footnote{Email: danielraban@berkeley.edu}}
\affil[1]{Department of Mathematics, University of California, Berkeley}
\affil[2]{Department of Statistics, University of California, Berkeley}
\date{}

\begin{document}

\maketitle

\begin{abstract}
Many instances of sequential sampling, including audit and inspection scheduling, representative sampling, and treatment assignment, require selections to be distributed evenly without becoming easy to anticipate or exploit. We study a family of sequential sampling rules that adaptively bias sampling probabilities in order to achieve faster convergence of the empirical distribution to a desired target law, while keeping the resulting samples as unpredictable as possible. The resulting self-balancing sampler is simple to implement, arises naturally among a class of Markovian samplers sharing a certain invariance property, and admits a stochastic mirror-descent interpretation. Our main results show that (i) this self-balancing sampler converges at the fastest possible $O(n^{-1})$ rate with explicit dependence on biasing parameters, beating the standard $O(n^{-1/2})$ rate of IID sampling, (ii) it is the unique solution to a natural entropy-regularized optimization problem which balances the convergence rate of the empirical law and the unpredictability of the samples, and (iii) in the weak-biasing regime, the properly centered counts process converges to an Ornstein–Uhlenbeck process in the diffusive limit. Together, these results support a practical framework for reducing repeated selections and long gaps in coverage without making future selections overly predictable.
\end{abstract}

\MSC{60J10 (Primary); 62L05, 90C25, 60F17 (Secondary)}
\keywords{sequential sampling; entropy-regularized optimization; citizen's assembly; audit scheduling; negatively-reinforced stochastic process; optimal control; stratified sampling}

\section{Introduction}

Consider the problem of sequentially sampling $X_i$ from $d$ outcomes $[d] := \{1,2,...,d\}$ in a way that allows the empirical distributions
$$\mu(n) \coloneqq \frac{1}{n} \sum_{i=1}^n \delta_{X_i}$$
to converge to the uniform distribution. Two naive ways of doing this are:
\begin{itemize}
    \item \emph{Uniformly at random}: Sample $X_i \overset{\operatorname{IID}}{\sim} \text{Uniform}([d])$.
    \item \emph{Deterministically/Greedily}: Sample $X_i = i \pmod d + 1$.
\end{itemize}

For these two sampling schemes, there's a trade-off between the \emph{speed} of convergence and the \emph{predictability} of the samples. Namely, the former converges at rate $O(n^{-1/2})$ in the total variation distance, while the latter converges at the much faster rate $O(n^{-1})$. On the other hand, the former process is as unpredictable as possible, maximizing the entropy at each sampling step, while the latter is perfectly predictable.

This tradeoff becomes readily apparent in the problem of scheduling audits or inspections, such as testing for doping in professional sports. A sports regulatory body is tasked with routinely testing teams for the use of performance enhancing drugs, but any deterministic schedule has the potential to be taken advantage of, since the players can temporarily cease doping in anticipation of the tests. On the other hand, sampling teams IID to receive doping tests can lead to wasteful behavior, such as spending resources to test a team multiple times in relatively short succession, while letting other teams remain temporarily untested; in effect, temporary imbalances in the empirical distribution of audited teams reflect inefficiencies in the order of teams chosen. Moreover, empirical evidence suggests \textit{biasing} inspections towards individuals with certain risk factors can improve outcomes \cite{antidoping}. Similar tradeoffs arise in the scheduling of restaurant health inspections, where inspection agencies often seek to prioritize higher-risk establishments or those with poor inspection histories while still preserving enough unpredictability to prevent strategic compliance behavior \cite{Buchholz2002RiskBasedRestaurant}.

Another closely related problem is how to sample from a population while keeping the realized sample balanced across several overlapping strata. For example, suppose we are constructing a citizen's assembly of 100 members out of a volunteer pool of 1000, and we would like our assembly to be demographically representative across multiple overlapping characteristics, such as gender, age, and education level. We may attempt to balance these characteristics using a deterministic or quota-constrained selection algorithm for choosing a committee, but, as noted in \cite{flanigan2021fair}, some algorithms can lead to somewhat undemocratic outcomes in which certain members of the volunteer pool have a minuscule chance of being chosen for the committee. On the other hand, a simple random sample can result in a committee which is not properly representative for each characteristic. 

Lastly, in randomized controlled trials there is inherent tension between ensuring covariates known to influence treatment effects are balanced across groups, while maintaining enough randomness to provide robustness against unknown prognostic factors. \cite{Harshaw01102024} explore this trade-off between covariate balance and robustness, but their sampling strategy is inherently non-sequential. On the other hand, the classical work of \cite{PocockSimon1975Sequential} provides a sequential analogue, biasing each assignment toward the treatment group that would reduce observed covariate imbalance. While effective as a balancing heuristic, the resulting procedure is not derived from optimizing any explicit objective, making the balance/robustness tradeoff relatively opaque.

In this paper, we aim to explore the trade-off between these two aims---balance/convergence and robustness/unpredictability---for a natural family of sequential sampling rules. Heuristically, we show that by adaptively biasing our sampling process, we can limit predictability while maintaining fast convergence rates. For this purpose, consider the following sequential sampling rule, informally described as ``downweight the outcome's probability and renormalize'': Fix some $\alpha \in (0,1)$, and start with some initial distribution $p(0) \in \Delta_+^d$, where 
    \[\Delta_+^d := \{x \in \R^d : x_i > 0, \sum x_i = 1\}\]
\noindent is the d-dimensional unit simplex with full support. At step $n$, recursively do the following:
\begin{enumerate}
    \item[] Step 1: Sample an outcome $X_n \sim p(n-1)$.
    \item[] Step 2: Define $p(n)$ by downweighting $p_{X_n}(n-1)$ by a factor of $\alpha$ and renormalizing the probability vector, i.e.
    $$p_{i}(n) \coloneqq \begin{cases}
            \frac{\alpha \cdot p_{i}(n-1)}{1- (1-\alpha) \cdot p_{X_n}(n-1)} & \text{if $i = X_n$}, \\[5pt]
            \frac{p_{i}(n-1)}{1 - (1-\alpha) \cdot p_{X_n}(n-1)} & \text{if $i \neq X_n$}.
        \end{cases}$$
\end{enumerate}
See Figure~\ref{fig:uniform_downweight_normalize} for a visual schematic of the update rule.

\begin{figure}
\centering
\begin{tikzpicture}[
    >=Latex,
    font=\small,
    x=0.9cm,
    y=3.2cm,
    bar/.style={draw, thick, fill=gray!20},
    chosen/.style={draw, thick, fill=red!35}
]

\def\d{5}            
\def\k{3}            
\def\w{0.65}         
\def\gap{0.18}       
\def\arrowlen{0.55}  

\def\xA{0}
\def\xB{5.55}
\def\xC{11.10}

\def\hunif{0.42}
\def\hdown{0.20}
\def\hnormother{0.50}
\def\hnormchosen{0.24}

\def\xleft{-0.4}
\def\xright{4.2}

\pgfmathsetmacro{\arrowABstart}{((\xA+\xright)+(\xB+\xleft)-\arrowlen)/2}
\pgfmathsetmacro{\arrowABend}{\arrowABstart+\arrowlen}
\pgfmathsetmacro{\arrowBCstart}{((\xB+\xright)+(\xC+\xleft)-\arrowlen)/2}
\pgfmathsetmacro{\arrowBCend}{\arrowBCstart+\arrowlen}

\draw[thick] (\xA+\xleft,0) -- (\xA+\xright,0);
\draw[thick] (\xA,0) -- (\xA,0.95);

\foreach \i in {1,...,5}{
    \pgfmathsetmacro{\x}{\xA + (\i-1)*(\w+\gap)}
    \ifnum\i=\k
    \draw[chosen] (\x,0) rectangle ++(\w,\hunif);
\else
    \draw[bar] (\x,0) rectangle ++(\w,\hunif);
\fi
    \node at (\x+0.5*\w,-0.08) {\(\frac{1}{d}\)};
}

\pgfmathsetmacro{\xchosenA}{\xA + (\k-1)*(\w+\gap) + 0.5*\w}
\draw[->, very thick, red!70!black] (\xchosenA,0.92) -- (\xchosenA,\hunif+0.03);
\node[red!70!black] at (\xchosenA,0.99) {choose};

\draw[->, ultra thick] (\arrowABstart,0.42) -- (\arrowABend,0.42);

\draw[thick] (\xB+\xleft,0) -- (\xB+\xright,0);
\draw[thick] (\xB,0) -- (\xB,0.95);

\foreach \i in {1,...,5}{
    \pgfmathsetmacro{\x}{\xB + (\i-1)*(\w+\gap)}
    \ifnum\i=\k
        \draw[chosen] (\x,0) rectangle ++(\w,\hdown);
        \node[red!70!black] at (\x+0.5*\w,-0.08) {\(\frac{\alpha}{d}\)};
    \else
        \draw[bar] (\x,0) rectangle ++(\w,\hunif);
        \node at (\x+0.5*\w,-0.08) {\(\frac{1}{d}\)};
    \fi
}

\draw[->, ultra thick] (\arrowBCstart,0.42) -- (\arrowBCend,0.42);

\draw[thick] (\xC+\xleft,0) -- (\xC+\xright,0);
\draw[thick] (\xC,0) -- (\xC,0.95);

\foreach \i in {1,...,5}{
    \pgfmathsetmacro{\x}{\xC + (\i-1)*(\w+\gap)}
    \ifnum\i=\k
        \draw[chosen] (\x,0) rectangle ++(\w,\hnormchosen);
    \else
        \draw[bar] (\x,0) rectangle ++(\w,\hnormother);
    \fi
}

\pgfmathsetmacro{\xchosenC}{\xC + (\k-1)*(\w+\gap) + 0.5*\w}
\node[red!70!black] at (\xchosenC,-0.1075) {\( \frac{\alpha}{d-1+\alpha}\)};

\pgfmathsetmacro{\xleftlab}{\xC + \w + 0.5*\gap}
\pgfmathsetmacro{\xrightlab}{\xC + 4*\w + 3.5*\gap}
\node at (\xleftlab-0.4,-0.10) {\( \frac{1}{d-1+\alpha}\)};
\node at (\xrightlab+0.4,-0.10) {\( \frac{1}{d-1+\alpha}\)};

\end{tikzpicture}
\caption{Schematic of the self-balancing update starting from a uniform distribution: choose an outcome, downweight it by \(\alpha\), then renormalize.}
\label{fig:uniform_downweight_normalize}
\end{figure}

This sequential sampling rule is still random, and intuitively, it generates empirical distributions which are closer to uniform because it de-emphasizes outcomes which are currently over-represented in the empirical distribution. More precisely, if we denote the counts process
    \[N_i = N_i(n) \coloneqq \# \{ 1 \leq j \leq n : X_j = i\},\] 
then this rule generates samples according to
$$p_i = p_{i}(n) \coloneqq \frac{p_{i}(0) \cdot \alpha^{N_i(n)}}{\sum_{j=1}^d p_{j}(0) \cdot \alpha^{N_j(n)} }.$$

\noindent Hence we can see that this sampler interpolates the two naive sampling methods: when $\alpha = 1$ (and $p(0)$ is uniform), we recover the uniform sampler, and when $\alpha \downarrow 0$, we recover greedy least-count sampling with uniformly random tie-breaking; if ties are broken deterministically, we recover deterministic sampling. 

As we will later see, the same scheme works for non-uniform target distributions by replacing a uniform $\alpha$ by a suitable $\alpha_i$ for each $i$; more precisely, for a target distribution $p^*$, one can set $\alpha_i = e^{-c/p_i^*}$ for every $i$, where the choice of $c > 0$ determines the trade-off between balance and predictability. Perhaps most unexpectedly, this simple sampling rule admits three complementary interpretations: as the essentially unique invariant potential-based sampler (Proposition~\ref{prop:charactizing_v_invariance}), as the Gibbs optimizer of an entropy-regularized count-imbalance objective (Theorem~\ref{thm:instantaneous_optimization}), and as an entropic stochastic mirror-descent algorithm (Theorem~\ref{thm:stochastic_mirror_descent}).

\paragraph*{Related Work.}
Our sampling rule is related to several bodies of work on negatively dependent and history-dependent sampling. Negatively reinforced and de-preferential urn models assign lower selection probabilities to colors that are currently overrepresented, typically by applying a fixed decreasing weight function to the vector of urn proportions \citep{Bandyopadhyay_Kaur_2018,kaur2019negatively}. These works establish laws of large numbers and central limit theorems under general replacement mechanisms. Our model differs in that observations directly increment the selection counts and the sampling probabilities depend exponentially on the raw counts, $p_i(n) \propto p_i(0) e^{-(-\log \alpha_i)N_i(n)}$, rather than sample proportions. Equivalently, the weight applied to the empirical proportion $\mu_i(n)$ is $e^{-n(-\log \alpha_i)\mu_i(n)}$, so for fixed $\alpha_i$, the strength of the repulsion on the empirical measure scale grows with time. This produces a positive-recurrent centered-count process and empirical convergence on the $n^{-1}$ scale, rather than ordinary central-limit fluctuations.

A second closely related line of work concerns self-interacting and self-repellent random walks. Recent self-repellent random-walk constructions modify a base Markov kernel by downweighting states having large empirical occupation proportions and establish almost-sure convergence and central limit theorems with reduced asymptotic variance \cite{doshi2024selfrepellentrandomwalksgeneral}. Exponentially interacting reinforced walks on complete graphs have also been studied, particularly for systems of several attracting or repelling walkers \cite{ROSALES2022353}. In contrast, our sampler has no graph constraint or dependence on the current location, and its exponential weights are applied to cumulative counts rather than to occupation proportions with a fixed interaction strength. As above, a time-inhomogeneous choice of $\alpha$ parameters expresses the process in terms of proportions, rather than counts, connecting the two regimes.

The trade-off between balance and unpredictability has a long history in restricted randomization
for sequential experiments. Beginning with Efron's biased-coin design and its subsequent extensions, these procedures
increase the probability of assigning the next subject to an underrepresented treatment while retaining sufficient
randomness to limit selection bias \cite{05287b01-9a36-3190-b702-371959e17200,7708aeae-3258-37ff-b421-c7c45f9f9dd0,https://doi.org/10.1111/1467-985X.00564,https://doi.org/10.1111/j.1467-9876.2004.00436.x,10.1214/09-AOS758}.
This literature commonly evaluates a design through both its resulting treatment imbalance and its \textit{selection bias},
the probability with which an observer can predict the next assignment. Within this framework, our sampler uses assignment probabilities that depend
 exponentially on the weighted cumulative counts. The resulting strength of correction increases with the observed
 count imbalance, rather than remaining at a fixed bias level. We further derive this exponential rule from an explicit
  entropy-regularized count-imbalance objective and analyze its convergence, centered-count dynamics, and weak-bias
  diffusion behavior. In this sense, the self-balancing sampler may be viewed as a multidimensional family of
   biased-coin designs for which the balance--unpredictability trade-off is both explicitly parameterized and
   amenable to a unified stochastic-process analysis.

Most closely related in the biased-coin literature is the work of \cite{Hu_2016}, which
provides a generalized framework for extending these classical ideas to an arbitrary number of treatment groups and positive rational target allocation proportions. Hu gives a Foster–Lyapunov drift condition under which the empirical allocation converges to the target at rate $O_p(n^{-1})$. For fixed bias parameters, our self-balancing sampler belongs to this general framework; in particular, Hu’s theorem recovers the qualitative $O_p(n^{-1})$ behavior in aligned rational cases where its drift condition applies. Our convergence results in Section~\ref{sec:convergence_rate} strengthen and extend this conclusion for our sampler in a number of ways: (i) we prove an $L^1$ total-variation bound of order $n^{-1}$, (ii) our convergence bound has explicit dependence on the bias parameters and initialization, (iii) our results do not have a drift bound as a hypothesis, (iv) our results apply for arbitrary full-support initial laws and arbitrary positive target distributions, and (v) we prove a pathwise $\log n/n$ bound, as well. Moreover, we also consider time-inhomogeneous biasing, which does not fit into Hu's framework. Beyond these convergence results, we derive the exponential sampling rule from an entropy-regularized imbalance objective, interpret it as stochastic mirror descent, and study its weak-bias diffusion limit.

\paragraph*{Outline.}
Section \ref{sec:potential_biased_sampling} shows that such samplers arise naturally by enforcing an invariance property on the sampling probabilities $p(n)$, namely that they depend only on \emph{relative} differences in the counts $N_i$. We show that after properly centering the counts process, it descends to an irreducible and positive recurrent Markov chain on the centered counts, a process which exhibits strong mean-reversion properties.

Then, in Section \ref{sec:convergence_rate} we show our first main result: that for any choice of $\alpha$, the self-balancing sampler converges at rate $O(1/n)$ in expectation, matching the fastest possible rate up to constants, while it pathwise remains within $O(\log n / n)$ of its target law almost surely, which is optimal up to a logarithmic factor. Hence, the self-balancing property of this sampler drastically improves upon the convergence rate of IID sampling, and matches the convergence rate of the naive greedy sampler. Moreover, we quantify the influence the bias parameter $\alpha$ has on the leading order constant and show there are two regimes: the leading constant is order $(\log(1/\alpha))^{-1/2}$ when $p(0) = p^*$, where $p^*$ is the asymptotic sampling distribution, and is order $(\log(1/\alpha))^{-1}$ otherwise. We additionally show that these two regimes apply for time-inhomogeneous biasing parameters, as well. A key technical ingredient, Lemma~\ref{lem:drift_lemma} (which is interesting in its own right), is a quantitative specialization of the drift criterion of \cite{PemantleRosenthal1999}, which provides the explicit parameter dependence needed in our convergence bounds.

In Section \ref{sec:optimization_problem}, we show that the self-balancing sampler admits a natural interpretation through an entropy-regularized optimization problem, balancing minimization of the mean-squared imbalance in the counts with maximizing entropy. This formalizes the balance/convergence versus robustness/unpredictability trade off in terms of an explicit objective, a limitation of the classical procedure \cite{PocockSimon1975Sequential}. We further show that the self-balancing sampler implements stochastic mirror descent on the space of sampling distributions. Section \ref{appendix:optimal_control} later shows that the sampler further fits within the broader framework of entropy-regularized optimal control.

In an effort to further quantify the trade-off between convergence and unpredictability, we develop diffusion
 limits in Section~\ref{sec:diffusion_approximations} and an adversarial notion of predictability in Section
~\ref{sec:adversarial_interpretation}. The latter defines the unpredictability as the error-rate an adversary
   trying to predict the next sampled index would achieve if they behaved optimally, which is possibly more
   natural than the entropy-based characterization of Section~\ref{sec:optimization_problem}. We are
   particularly interested in a \emph{weak-biasing regime}, in which $\alpha \uparrow 1$, which is the
   natural regime under which diffusion limits arise. Using these tools, we provide some non-rigorous analysis
   into how the behavior of the sampler at stationarity changes as we vary the bias parameter $\alpha$.
In this weak-biasing regime, the diffusion heuristics approximate the centered counts by an Ornstein-Uhlenbeck process with stationary
fluctuations of order $(\log(1/\alpha))^{-1/2}$, matching the upper-bounds derived in Section~\ref{sec:convergence_rate} and
suggesting how stronger biasing enforces mean-reversion while reducing randomness.

Complementing this, the adversarial analysis in Section~\ref{sec:adversarial_interpretation}
translates these fluctuations into prediction accuracy. In the $d=2$ case with uniform target law $p^*$,
we show, through explicit analysis of the stationary law of the centered counts process, that an adversary
can predict the next sample with accuracy $\frac{1}{2} + \Theta(\sqrt{\log(1/\alpha)})$ in the weak-bias regime
$\alpha \uparrow 1$. In more general settings, through non-rigorous analysis we provide evidence of the following conjecture:
 If $p^*$ does not have a unique maximizer, the adversary can exploit fluctuations
among the tied outcomes, and its excess prediction accuracy above the naive baseline $\max_i p_i^*$ is again of order
$\sqrt{\log(1/\alpha)}$ as in the $d=2$ case.

Empirical demonstrations and applications to the citizen's assembly model of our results are
provided in Appendix~\ref{appendix:empirical_results}. For clarity of exposition, proofs are
postponed until Appendix~\ref{appendix:proofs}. 

\section{Basic results}

\subsection{Definitions}

A \textit{sequential sampling rule} on $[d]$ consists of a filtered probability space $(\Omega, \cF, (\cF_n)_{n \geq 0}, \P)$, an $\cF_n$-adapted sequence $(\rho_n)_{n \geq 0}$ of random distributions on $[d]$, and random variables $(X_n)_{n \geq 1}$ on $[d]$ so that
    \[\P(X_{n+1} = i \mid \cF_n) = \rho_n(i), \quad \forall i \in [d]. \]
\noindent Most commonly, we have $\cF_n = \sigma(X_1,..., X_n)$. Such a sequential sampling rule selects its distribution $\rho_n$ based solely on the previously observed outcomes, and then conditional on $\rho_n$ generates the next outcome $X_{n+1}$ by sampling from $\rho_n$. 

We say such a rule is \textit{Markovian} if it selects its next sample $X_{n+1} \in [d]$ based solely on the current selection counts $\{N_i(n)\}_{i=1}^d$ where:
\[N_i(n) := |\{ 1 \leq j \leq n : X_j = i\}|.\]
\noindent  Formally, such a sampling rule is given by a measurable function $\pi:\N^d \to \Delta^d$ where $\Delta^d$ is the unit simplex in $\R^d$, and samples are generated via:
\[\P\left(X_{n+1} = i \mid \{X_j\}_{j \leq n}\right) = \pi_i(N(n)).\]
\noindent The self-balancing sampler is then a Markovian sampling rule with sampling function $\pi_i = p_i$. In this paper, we will show that this sampler (and more generally samplers with $\alpha_i \in (0,1)$ possibly different for each outcome) arises naturally within this class of sampling rules after enforcing a specific symmetry on the sampling function $\pi$, and that it exhibits nice optimality properties. Note we can express this sampling rule as a softmax
\begin{equation}
    p_i = p_i(n) = \frac{p_{i}(0) \cdot e^{-\beta_i N_{i}(n)}}{\sum_{j=1}^d p_{j}(0) \cdot e^{-\beta_j N_{j}(n) }},
    \label{eqn:self_balancing_sampler_transition_probs}
\end{equation}

where $\beta_i = - \log \alpha_i$. As we will see below, these $\beta_i$ are in some sense a more natural parameterization. Heuristically, they can be interpreted as follows: for $i,j \in [d]$, if $\beta_i^{-1} /\beta_j^{-1} = \frac{p}{q}$ is rational, then $p$ samples of $i$ are weighted the same as $q$ samples of $j$ under this sampling scheme. Thus $\beta_i^{-1}$ measures the relative weight assigned to sampling $i \in [d]$. Higher weighted indices tend to be selected more often.   

In this paper, we are interested in how the empirical sampling distributions
\[\mu(n) := \frac{1}{n} \sum_{i=1}^n \delta_{X_i}\]
\noindent evolve over time under the self-balancing sampler. We show the limiting distribution is
\[p_i^* := \frac{\beta_i^{-1}}{\sum_{j=1}^d \beta_j^{-1}}.\]
\noindent Namely, asymptotically we just select each $i$ proportional to its relative weight $\beta_i^{-1}.$

Since the counts $N_i(n)$ drift off to infinity over time, we will require some form of centering to properly study this process. There are two natural centering schemes. The first, which we call the \textit{mean-centered counts}, just centers each count by its expected limiting proportion:
\[Y_i(n) := N_i(n) - n p_i^*.\]
The second, which we call the \textit{projection-centered counts}, reweights the counts according to their relative weights $\beta_i^{-1}$, fixes the coordinate $d$ as a baseline, and compares the counts against this baseline:
\[\widetilde{Y}_i(n) := \beta_iN_i(n) - \beta_d N_d(n) = \beta_i\beta_d \cdot  [\beta_d^{-1} N_i(n) - \beta_i^{-1} N_d(n)].\]
\noindent We can express the sample probabilities $p_i$ directly in terms of these centered counts via
\[p_i = \frac{p_{i}(0) \cdot e^{- \beta_i Y_{i}(n)}}{\sum_{j=1}^d p_{j}(0) \cdot e^{- \beta_j Y_{j}(n) }}= \frac{p_{i}(0) \cdot e^{- \widetilde{Y}_{i}(n)}}{\sum_{j=1}^d p_{j}(0) \cdot e^{-\widetilde{Y}_{j}(n) }}.\]

Lastly, we will often be interested in the \textit{weak-biasing regime}, where all the bias parameters tend to zero. To avoid changing the target proportions $p^*$, we want the ratios $\beta_i/\beta_j$ to remain constant. Let $\beta \downarrow 0$ denote any sequence of bias $\beta^{(k)}$ of the form $\beta^{(k)}_i = c_k \beta_i$ for $c_k \downarrow 0$ and $\beta_i$ fixed. Similarly, define $\beta \uparrow \infty$ if $c_k \uparrow \infty$. 

\subsection{Potential-based sampling}
\label{sec:potential_biased_sampling}

In this section, we show that the self-balancing sampler arises naturally by enforcing a certain invariance property on our class of Markovian sampling rules, and that under 
this relation the counts process $(N(n))_{n \in \N}$ descends 
to our projection-centered counts $(\tilde{Y}(n))_{n \in \N}$. This projected chain is a Markov chain in its own right, and under mild assumptions is irreducible, positive recurrent, and admits a unique stationary distribution. Proofs can be found in Appendix~\ref{appendix:2_2_proofs_potential_based_sampling}.

 For a Markovian sampling rule, one natural choice for the sampling function $\pi$ is to define a potential $\phi_i: \N \to (0, \infty)$ for each coordinate and to select a sample proportional to its potential
\[\pi^\phi_i(x) := \frac{\phi_i(x_i)}{\sum_{j=1}^d \phi_j(x_j)}, \quad \forall x \in \N^d.\]
\noindent We say a Markovian sampling rule $\pi$ is \textit{translation invariant} if
\[\pi(x + \mathbf{1}) = \pi(x), \quad \forall x \in \N^d.\]
\noindent Translation-invariance expresses the opinion that our next sample $X_{n+1}$ should depend only on the relative differences in the sampling counts $\{N_i(n)\}_{i=1}^d$ rather than their absolute sizes. More generally, we say $\pi$ is \textit{$v$-invariant} for some $v \in \N^d$ if
\[\pi(x + v) = \pi(x), \quad \forall x \in \N^d.\]
\noindent Here $v$-invariance expresses the heuristic that sampling $i$ a total of $v_i$ times ought to be equivalent to sampling $j$ a total of $v_j$ times. Thus $(v_i)$ plays an analogous role to the relative weights $(\beta_i^{-1})$. 

Note that we can view v-invariance as inducing an equivalence relation $\sim_v$ on $\N^d$:
    \[x \sim_v y \iff x = y + k v, \quad k \in \Z.\]
\noindent Under this relation and a slight technical assumption, our counts process $(N(n))_{n \in \N}$ descends to a Markov chain on $\Z^{d-1}$. 

\begin{proposition}[Projection-Centered Chain]
    If $\pi$ is $v$-invariant and $\mathrm{gcd}(v_1,..., v_d) = 1$ 
    then $v_dN_i(n) - v_i N_d(n),$ is a Markov chain on $\Z^{d-1}$.
     \label{prop:projection_centered_counts_are_markovian}
\end{proposition}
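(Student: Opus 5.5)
The plan is to show that $\pi(N(n))$ is a function of $Z(n) := (v_d N_i(n) - v_i N_d(n))_{i=1}^{d-1}$, and that $Z(n+1)$ is obtained from $Z(n)$ by a deterministic increment depending only on $X_{n+1}$. Together these give the Markov property with respect to $\cF_n = \sigma(X_1,\dots,X_n)$.

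\emph{Increments.} If $X_{n+1} = i$ with $i<d$, then $Z(n+1) = Z(n) + v_d e_i$. If $X_{n+1} = d$, then $Z(n+1) = Z(n) - (v_1,\dots,v_{d-1})$. So
\[
\P\bigl(Z(n+1) = Z(n) + w_j \mid \cF_n\bigr) = \pi_j(N(n)),
\]
where $w_j = v_d e_j$ for $j<d$ and $w_d = -(v_1,\dots,v_{d-1})$. (If several $w_j$ coincide, we sum the corresponding probabilities.) It therefore suffices to show $\pi(N(n)) = F(Z(n))$ for some function $F$ on $\Z^{d-1}$.

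\emph{Key step: $Z$ determines the $\sim_v$-class.} Here the gcd condition enters. The map $x \mapsto (v_d x_i - v_i x_d)_{i<d}$ is linear. If $v_d \neq 0$, its kernel is exactly $\R v$: the equations $v_d x_i = v_i x_d$ force $x = (x_d/v_d)\, v$.

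Now take $x, y \in \N^d$ with the same image. Then $x - y = t v$ for some real $t$, and since $x - y \in \Z^d$ we need $t v \in \Z^d$. Because $\gcd(v_1,\dots,v_d) = 1$, there are integers $a_i$ with $\sum a_i v_i = 1$, hence $t = \sum a_i (t v_i) \in \Z$. Thus $x \sim_v y$.

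Applying $v$-invariance $|t|$ times gives $\pi(x) = \pi(y)$. The direction matters: if $t > 0$ then $x = y + t v$ and we invoke $\pi(y + v) = \pi(y)$ repeatedly, all intermediate points lying in $\N^d$; similarly if $t < 0$. So $F(z) := \pi(x)$ for any $x \in \N^d$ mapping to $z$ is well defined on the image, which is the state space the chain actually visits. We may extend $F$ arbitrarily elsewhere.

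\emph{Degenerate case $v_d = 0$.} This is the main obstacle: the kernel computation fails, and the construction uses coordinate $d$ as baseline. If $v_d = 0$ but some $v_k \neq 0$, the map becomes $x \mapsto -x_d (v_1,\dots,v_{d-1})$, which loses information. The statement then seems to require $v_d \neq 0$, implicitly the case $v \in \N_{>0}^d$ of interest, matching $\beta_d > 0$. I would state this assumption explicitly, or otherwise relabel so that coordinate $d$ has $v_d > 0$.

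With $F$ in hand, $\P(Z(n+1) = z' \mid \cF_n) = \sum_{j : Z(n) + w_j = z'} F_j(Z(n))$, which depends only on $Z(n)$. Hence $(Z(n))$ is a time-homogeneous Markov chain on $\Z^{d-1}$.
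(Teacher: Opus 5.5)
Your proposal is correct and follows essentially the same route as the paper. Both arguments show that $T_v(x)=T_v(y)$ forces $x-y\in\Z v$ via the gcd condition. Then $v$-invariance makes $\pi(N(n))$ a function of the projected counts, and since each increment is determined by $X_{n+1}$, the projected process is Markov. Your Bezout argument for $t\in\Z$ and your explicit requirement $v_d\neq 0$ are slightly more careful than the paper's version, which implicitly assumes all $v_i>0$ when it divides by $v_i$ and $v_d$.
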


\noindent Under the assumptions of the above proposition, the relation $\sim_v$ is equivalent to
    \[x \sim_v y \iff v_dx_i - v_ix_d = v_dy_i - v_iy_d.\]

\noindent Note that
\[v_dN_i(n) - v_i N_d(n) = \beta_d^{-1}N_i(n) - \beta_i^{-1}N_d(n),\]
\noindent which, up to constants, is just our projection-centered counts $\tilde{Y}(n)$. Alternatively, we can express the above equivalence as
\[\frac{x_i}{v_i} - \frac{1}{d} \sum_{j=1}^d \frac{x_j}{v_j} = \frac{y_i}{v_i} - \frac{1}{d} \sum_{j=1}^d \frac{y_j}{v_j}, \quad \forall i \in [d],\]
which shows the projection-centered counts $\tilde{Y}(n)$ center the reweighted counts $\beta_iN_i(n)$ by their mean, rather than the long-term
average $np^*$ as $Y(n)$ does. Now, we show the self-balancing sampler is essentially the only $v$-invariant sampling rule generated by a potential.

\begin{proposition}
    If  $\pi^\phi$ is $v$-invariant, there exists $b, c_{i,k} > 0$ so
    \[\phi_i(mv_i + k) = c_{i,k} b^m, \quad \forall m \in \N, k \in \{0, 1, \ldots, v_i-1\}.\]
    \noindent Moreover, if $\phi_i$ is either log-convex or log-concave, then $\phi_i(n) = c_i \alpha_i^n$ where
    \[\log(\alpha_i) = \log(b) v_i^{-1}.\]

    \label{prop:charactizing_v_invariance}
\end{proposition}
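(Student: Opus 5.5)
The proof has two parts. First, $v$-invariance forces a functional equation that separates the coordinates, which gives the geometric-in-$m$ structure. Second, log-convexity or log-concavity, combined with the resulting periodicity of log-increments, collapses the $v_i$ residue classes into a single exponential. Throughout I assume $d \geq 2$; for $d=1$ the sampler is trivial and the statement is vacuous.

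\emph{Step 1 (separation of variables).} Write $S(x) = \sum_j \phi_j(x_j) > 0$. The condition $\pi^\phi(x+v) = \pi^\phi(x)$ says $\phi_i(x_i+v_i)/S(x+v) = \phi_i(x_i)/S(x)$ for every $i$. Dividing the identities for two indices $i \neq j$ eliminates $S$ and gives
\[
\frac{\phi_i(x_i+v_i)}{\phi_i(x_i)} = \frac{\phi_j(x_j+v_j)}{\phi_j(x_j)}, \qquad \forall x \in \N^d.
\]
The left side depends only on $x_i$ and the right side only on $x_j$, and these coordinates vary independently. Hence both sides equal a constant $b_{ij} > 0$. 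Comparing across pairs shows the constant is a single $b>0$, independent of $i$ and $j$. So $\phi_i(n+v_i) = b\,\phi_i(n)$ for all $n \in \N$, and iterating gives $\phi_i(mv_i+k) = \phi_i(k)\, b^m$. This is the first claim with $c_{i,k} := \phi_i(k) > 0$.

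\emph{Step 2 (periodic increments).} Set $\psi_i = \log \phi_i$ and $\Delta_i(n) = \psi_i(n+1) - \psi_i(n)$. Step 1 gives $\psi_i(n+v_i) = \psi_i(n) + \log b$ for all $n$. Subtracting this identity at $n$ from the same identity at $n+1$ shows $\Delta_i(n+v_i) = \Delta_i(n)$, so $\Delta_i$ is $v_i$-periodic. Summing over one period gives
\[
\sum_{k=0}^{v_i-1} \Delta_i(n+k) = \log b.
\]

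\emph{Step 3 (monotone and periodic implies constant).} Log-convexity of $\phi_i$ on $\N$ means $\Delta_i$ is nondecreasing, and log-concavity means it is nonincreasing. In the convex case,
\[
\Delta_i(n) \leq \Delta_i(n+1) \leq \cdots \leq \Delta_i(n+v_i) = \Delta_i(n),
\]
so all these inequalities are equalities and $\Delta_i$ is constant. The concave case is symmetric. The period-sum identity then forces $\Delta_i \equiv v_i^{-1}\log b$. Therefore $\phi_i(n) = \phi_i(0)\,\alpha_i^n$ with $\log \alpha_i = v_i^{-1}\log b$, which gives the second claim with $c_i = \phi_i(0)$.

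The only delicate point is the separation argument in Step 1. It needs all coordinates of $x \in \N^d$ to range independently, which holds because the invariance is assumed on all of $\N^d$, and it needs $d \geq 2$. The rest is elementary.
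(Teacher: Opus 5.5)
Your proof is correct and follows the paper's argument. Dividing the invariance identities for two coordinates removes $S$ and gives $\phi_i(n+v_i)=b\,\phi_i(n)$; then the monotone log-increments are squeezed over one period until they are constant. Your periodic-increment and period-sum formulation just makes explicit what the paper leaves implicit: a single $b$ serves all pairs, and the common increment equals $\log b/v_i$. One small correction: for $d=1$ the statement is false rather than vacuous, since every $\phi_1$ gives $\pi^\phi\equiv 1$. That is exactly why your restriction to $d\geq 2$ is needed.
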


\noindent In the case of translation-invariance, the log-convexity/concavity condition is unnecessary and we recover the self-balancing sampler in the case of uniform $\beta_i \equiv \beta$. Note that the sampling function for the self-balancing sampler is $v$-invariant only in the case the ratios $\beta_i^{-1}/\beta_j^{-1}$ are all rational, enforcing a relatively minor constraint on $\beta$. 

    When we have potential $\phi_i(n) = c_i \alpha_i^n$ so that $\pi^\phi$ is v-invariant, it is easy to see $\pi^\phi$ is invariant with respect to
        \[v' := v / \mathrm{gcd(v_1,..., v_d),}\]
    \noindent as well. Hence in this setting of interest in this paper, we may always assume $\mathrm{gcd}(v) = 1$ so by Proposition~\ref{prop:projection_centered_counts_are_markovian} our projection-centered counts are Markovian.

In a sense, the projection-centered counts are more natural because they live 
on a countable space $\Z^{d-1}$. As the self-balancing sampler has strong mean-reversion 
properties, it is natural to expect the projection-centered counts are positive recurrent, and hence
have some stationary distribution. This is indeed the case.

\begin{lemma}
    For $\beta_i=cv_i^{-1}$, where $v\in\N_{>0}^d$ and $c>0$, the projected chain
    \[
        \left(v_dN_i(n)-v_iN_d(n)\right)_{i=1}^{d-1}
    \]
    \noindent with potential $\phi_i(n) = p_i(0)e^{-\beta_i n}$ as in \eqref{eqn:self_balancing_sampler_transition_probs}
    is irreducible and positive recurrent, and thus has a unique stationary
    distribution $\eta$ on
    \[
        \left\{\left(v_d n_i-v_i n_d\right)_{i=1}^{d-1}:n\in\Z^d\right\}
        \subseteq\Z^{d-1}.
    \]
    \label{lem:projection_centered_has_stationarity}
\end{lemma}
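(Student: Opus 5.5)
The plan is to work with the chain $Z(n) = (v_dN_i(n) - v_iN_d(n))_{i=1}^{d-1}$. By Proposition~\ref{prop:projection_centered_counts_are_markovian} it is Markov, after first reducing to $\gcd(v)=1$ as discussed above; this reduction does not change the chain up to a scalar factor. Its transitions are explicit. If outcome $i\le d-1$ is sampled, then $Z$ jumps by $v_d e_i$. If outcome $d$ is sampled, then $Z$ jumps by $-(v_1,\dots,v_{d-1})$. The jump probabilities are
\[
p_j(z) \propto p_j(0)e^{-\widetilde Y_j}, \qquad \widetilde Y_j = (c/(v_jv_d))\, z_j \ \ (j<d), \qquad \widetilde Y_d=0 .
\]
The state space is $S=\{(v_dn_i-v_in_d)_i : n\in\Z^d\}$, the lattice generated by these $d$ jump vectors. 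I will prove irreducibility on $S$ and positive recurrence via a Foster--Lyapunov drift criterion. Uniqueness of the stationary law $\eta$ then follows from standard Markov chain theory.

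For \emph{irreducibility}, every jump has positive probability at every state, since $p(0)$ has full support and the softmax is strictly positive. So from any $z$ we can add $v_de_i$ or subtract $(v_1,\dots,v_{d-1})$ in any order. The key observation is that the sum of $v_j$ copies of each $i$-jump plus $v_d$ copies of the $d$-jump is zero, because the vector $n=v$ maps to $0$. Hence every generator also has its negative as a finite positive combination of generators. Concretely, $-(v_de_i)$ equals $(v_i-1)$ further $i$-jumps, plus $v_j$ $j$-jumps for each $j\ne i$, plus $v_d$ $d$-jumps. Therefore the semigroup generated by the jumps is the full group $S$, and any two states of $S$ communicate.

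For \emph{positive recurrence}, I will use a Lyapunov function on the underlying counts modulo $v$. A natural choice is the quadratic
\[
V(z)=\sum_{j=1}^d \tfrac{1}{\beta_j}\big(\widetilde Y_j - \bar Y\big)^2 ,
\]
where $\bar Y$ is the $\beta^{-1}$-weighted mean of the $\widetilde Y_j=\beta_jN_j$ modulo constants. This is well defined on $S$ and is a positive definite quadratic in $z$. Sampling $j$ shifts $\widetilde Y_j$ by $\beta_j$. A direct expansion gives
\[
\E[\Delta V\mid z]=2\sum_j p_j(z)\,(\widetilde Y_j-\bar Y) + O(\max\beta).
\]
Because $p_j\propto p_j(0)e^{-\widetilde Y_j}$ is decreasing in $\widetilde Y_j$, the leading term is a covariance-type expression between $\widetilde Y_j-\bar Y$ and the weights. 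Outside a compact set it should be at most $-\kappa\,\max_j|\widetilde Y_j-\bar Y|$ for some $\kappa>0$. The reason is that as $\|z\|\to\infty$, the coordinate with the largest $\widetilde Y_j-\bar Y$ receives exponentially small mass, while mass concentrates on coordinates with $\widetilde Y_j-\bar Y \le -\epsilon\|z\|$.

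Finite level sets of $V$ plus the strictly negative drift off a finite set then give positive recurrence by Foster's criterion. The drift estimate is the main obstacle. It needs care because the weights $p_j(0)$ and the mismatched $\beta_j$ prevent the clean identity available in the uniform case. I would first try writing $\sum_j p_j(\widetilde Y_j-\bar Y)$ as $\E_p[W]-\E_{p^*}[W]$ with $W=\widetilde Y-\bar Y$, where $\E_{p^*}[W]=0$ by the definition of $\bar Y$. Then I would bound it by a tilting argument: $p$ is $p(0)$ exponentially tilted by $-W$, so $\E_p[W]\le \log\max_j(p_j(0)/p^*_j)\cdot O(1) - \tfrac12\,\mathrm{range}(W)\cdot(1-o(1))$ for large $\|W\|$. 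If this becomes messy, a fallback is to use $V=\log\sum_j p_j(0)e^{-\widetilde Y_j}$-type or $\max_j(\widetilde Y_j-\bar Y)$ Lyapunov functions, checking a multi-step drift over $\sum_j v_j$ steps.
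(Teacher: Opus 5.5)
Your proposal is correct, and your irreducibility argument is the paper's. The vector $n=v$ maps to $0$, so every jump can be undone by a positive combination of jumps; the paper phrases this as returning from $x$ to $0$ by sampling $i$ a total of $kv_i-a_i$ times.

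For positive recurrence you use a different Lyapunov function. You take the $\beta^{-1}$-weighted quadratic $V=\sum_j\beta_j^{-1}W_j^2$ in the unshifted mean-centered coordinates $W_j=\beta_j(N_j-np_j^*)$. This is essentially the function used later for Theorem~\ref{thm:average_case_conv_rate}, without the $c_i$ shift. Its first-order drift is $2\E_p[W]$ with $\E_{p^*}[W]=0$, so you need a tilting estimate. The paper instead shifts to $Q_i=\beta_iN_i-\log(p_i(0)\beta_i)$ and takes the \emph{unweighted} variance $\sum_i(Q_i-\bar Q)^2$. Then the first-order drift is $\sum_i(Q_i-\bar Q)p_i\beta_i=\frac1d\sum_{i<j}(Q_i-Q_j)(p_i\beta_i-p_j\beta_j)$. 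Since $p_i\beta_i\propto e^{-Q_i}$ is the same decreasing function of $Q_i$ for every $i$, each pairwise term is nonpositive. Keeping only the extreme pair gives a bound of the form $-R(1-e^{-R})/(d^2\max_iv_i)$ once the range of $Q$ exceeds $R$. So the clean identity you thought was special to the uniform case survives in general after this shift and reweighting. The paper's choice buys an explicit drift bound with no tilting analysis. Yours buys continuity with the convergence-rate proof and constants stated directly in terms of $p^*_{\min}$ and $\min_jp_j(0)$.

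You leave the drift bound as a heuristic, but it closes in two lines. With $m=\arg\min_jW_j$ one has $p_j\le\frac{p_j(0)}{p_m(0)}e^{-(W_j-W_m)}$, hence
\[
\sum_jp_jW_j\le W_m+\sum_j\frac{p_j(0)}{p_m(0)}(W_j-W_m)e^{-(W_j-W_m)}\le\min_jW_j+\frac{1}{e\min_kp_k(0)}.
\]
Meanwhile $\sum_jp_j^*W_j=0$ forces $\min_jW_j\le-p^*_{\min}\max_j|W_j|$. Thus $\E[\Delta V\mid z]\le-2p^*_{\min}\max_j|W_j|+O(1)$. This is at most $-1$ off a finite subset of $S$, since $z\mapsto W$ is linear and injective, and Foster's criterion applies.

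Drop the fallback inequality with $-\tfrac12\,\mathrm{range}(W)$; it fails for non-uniform $p^*$. Take $d=2$ and $\beta_1=9\beta_2$, so $p^*=(\tfrac1{10},\tfrac9{10})$, and $W=t(9,-1)$. Then $\E_p[W]\approx-t=-\tfrac1{10}\,\mathrm{range}(W)$. The correct coefficient is of order $p^*_{\min}$, which is all you need.
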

In general, determining the stationary distribution $\eta$ is challenging, as the chain is non-reversible for $d > 2$.
However, in the special case $d=2$, we can calculate $\eta$ explicitly, which we will do in Lemma \ref{lem:2d_stationary_project_centered} below. 
Positive recurrence is just the first manifestation of the sampler's mean-reversion properties, which we will explore more in the following sections. 

Unsurprisingly, at stationarity the  sampler generates samples according to the target $p^*$.

\begin{lemma}
Suppose $\pi$ is $v$-invariant and the projection-centered chain $\tilde{Y}^v(n) = v_dN_i(n) - v_i N_d(n)$ has stationary distribution $\eta$ on $\Z^{d-1}$.
 If we sample $y \sim \eta$ and $X \mid y \sim \pi(y)$, then:
    \[\P(X = i) = \frac{v_i}{\sum_j v_j} = p_i^*.\]
 \label{lem:sampling_at_stationarity_gives_pstar}
\end{lemma}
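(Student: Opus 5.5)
We use stationarity of the projected chain to show that the expected increment of the projection-centered counts vanishes, and then read off the sampling probabilities from the resulting linear system. Let $Y \sim \eta$ and $X \mid Y \sim \pi(Y)$, where $\pi(Y)$ means $\pi$ evaluated at any representative of the class $Y$; this is well defined by $v$-invariance. Set $q_i := \P(X=i)$. One step of the chain sends $Y$ to $Y' = Y + \Delta(X)$, where $\Delta(X)_i = v_d \mathbbm{1}\{X=i\} - v_i\mathbbm{1}\{X=d\}$ for $i \le d-1$. Stationarity gives $Y' \overset{d}{=} Y$. Hence, provided $\E\abs{Y} < \infty$,
\[
0 = \E[Y'] - \E[Y] = \E[\Delta(X)], \qquad\text{so}\qquad v_d q_i - v_i q_d = 0 \quad (i = 1,\dots,d-1).
\]
This says $q_i / v_i = q_d / v_d$ for all $i$, so $q \propto v$. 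Since $\sum_i q_i = 1$, we get $q_i = v_i / \sum_j v_j = p_i^*$.

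The main obstacle is integrability: $\E\abs{Y}$ must be finite for the difference of expectations to make sense. A stationary distribution of a positive recurrent chain need not have a first moment. Since the increments are bounded, I would avoid moments entirely with a truncation argument. For each coordinate $i$, apply stationarity to the bounded functions $f_M(y) = (y_i \wedge M) \vee (-M)$, giving
\[
\E[f_M(Y + \Delta(X)) - f_M(Y)] = 0 .
\]
The integrand is bounded by $\max\{v_d, v_i\}$. It converges pointwise to $\Delta(X)_i$ as $M \to \infty$, since for fixed $Y$ the truncation is eventually inactive. Dominated convergence then yields $\E[\Delta(X)_i] = 0$ with no moment assumption.

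A cleaner alternative is an ergodic argument. By the ergodic theorem for positive recurrent chains, the empirical frequencies $N_i(n)/n$ converge almost surely to $q_i$. The projected coordinates $v_d N_i(n) - v_i N_d(n)$ form a recurrent chain, so they return to a fixed state infinitely often; along those times their ratio to $n$ is $0$, which forces $v_d q_i - v_i q_d = 0$. I would present the truncation argument, since it uses only the stationarity equation and boundedness of the jumps.
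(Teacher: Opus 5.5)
Your proposal is correct and follows the same route as the paper: stationarity of the projected chain forces the mean one-step increment to vanish, its coordinates give $v_d q_i - v_i q_d = 0$, and normalizing yields $q \propto v$. Your truncation and dominated-convergence step improves on the paper's argument. The paper passes directly from $\tilde{Y}^v(n) \overset{d}{=} \tilde{Y}^v(n+1)$ to $\E \Delta_n \tilde{Y}^v = 0$, which tacitly assumes $\eta$ has a finite first moment; your argument removes that assumption.
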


\subsection{Fast convergence of the empirical distribution for self-balancing sampling}

\label{sec:convergence_rate}

In this section, we show that for any choice of $\alpha_i \in (0,1)$, the self-balancing sampler converges to the desired limiting distribution at a rate faster than IID sampling, and in fact converges as fast as possible. Below, for functions $f,g:\N \to [0, \infty)$ we say $f = O(g)$ if $\lim\sup_{n\to \infty} f/g < \infty$. Further, we say $f = \Theta(g)$ if $f = O(g)$ and $g = O(f)$. Proofs can be found in Appendix~\ref{appendix:2_3_convergence_rate_proofs}.

As a preliminary result, we note that the empirical distributions associated to many sequential sampling rules generally converge, which is a consequence of their close relationship with the C\'esaro means of the 
sampling distributions $p(n)$. However, this relationship is at the scale of $O(1/\sqrt n)$, which is the best one can hope for in general.

\begin{proposition}\label{prop:empirical_averages}
    $$\mathbb E \left[\max_{1 \leq i  \leq d} \left |\mu_{i}(n) - \frac{1}{n} \sum_{m=0}^{n-1} p_{i}(m) \right| \right]\lesssim \frac{d}{\sqrt n}.$$
    Moreover, if the projection-centered count process for a sequential sampling rule is irreducible and positive recurrent, then both of these quantities converge almost surely.
\end{proposition}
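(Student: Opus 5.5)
The first claim follows from a martingale decomposition. For each $i$, define
\[
M_i(n) := \sum_{m=1}^{n} \bigl(\mathbbm{1}\{X_m = i\} - p_i(m-1)\bigr) = n\mu_i(n) - \sum_{m=0}^{n-1} p_i(m).
\]
Since $\P(X_m = i \mid \cF_{m-1}) = p_i(m-1)$, each $M_i$ is an $(\cF_n)$-martingale. Its increments are bounded by $1$ and are orthogonal, so
\[
\E[M_i(n)^2] = \sum_{m=1}^n \E\bigl[p_i(m-1)(1-p_i(m-1))\bigr] \leq n.
\]
This holds for any sequential sampling rule; we only use $\rho_{m-1} = p(m-1)$.

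To bound the maximum we avoid a union bound on tails and use
\[
\E\Bigl[\max_i |M_i(n)|\Bigr] \leq \sum_{i=1}^d \E|M_i(n)| \leq \sum_{i=1}^d \sqrt{\E[M_i(n)^2]} \leq d\sqrt n.
\]
Dividing by $n$ gives the $d/\sqrt n$ bound. A sharper constant is available, since $\sum_i \E[M_i^2] \leq n$ and Cauchy--Schwarz give $\sqrt{dn}$, but $d/\sqrt n$ is what is claimed.

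Almost sure convergence of the difference holds in general. Because $\sum_m m^{-2}\,\Var(\text{increment}) < \infty$, the martingale strong law (Kronecker's lemma applied to $\sum_m m^{-1}(\mathbbm{1}\{X_m=i\} - p_i(m-1))$, which converges a.s. by $L^2$-boundedness) gives $M_i(n)/n \to 0$ a.s. It therefore suffices to show that the Cesàro means $\frac1n\sum_{m<n} p_i(m)$ converge almost surely.

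For this step we invoke the hypothesis. The projection-centered chain $\tilde Y(n)$ is irreducible and positive recurrent with stationary law $\eta$, and for a $v$-invariant Markovian rule $p(m) = \pi(\tilde Y(m))$ is a bounded function of $\tilde Y(m)$. The ergodic theorem for positive recurrent Markov chains then gives
\[
\frac1n \sum_{m=0}^{n-1} \pi_i(\tilde Y(m)) \to \sum_y \eta(y)\pi_i(y) \quad \text{a.s.},
\]
and by Lemma~\ref{lem:sampling_at_stationarity_gives_pstar} this limit equals $p_i^*$. Combined with the martingale step, $\mu_i(n) \to p_i^*$ almost surely as well.

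The main subtlety is the bookkeeping needed to identify $p(m)$ as a function of the projected chain. This requires $\pi(x)$ to depend only on the $\sim_v$ class of $x$, i.e. $v$-invariance with $\gcd(v)=1$ as in Proposition~\ref{prop:projection_centered_counts_are_markovian}. Irreducibility ensures the ergodic theorem applies from any starting state.
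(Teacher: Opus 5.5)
Your proof is correct. It uses the same martingale $M_i(n)=N_i(n)-\sum_{m<n}p_i(m)$ and the same ergodic-theorem step as the paper, but controls the martingale differently.

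The paper applies Azuma--Hoeffding to get $\P(|M_i(n)|\ge n\varepsilon)\le 2e^{-n\varepsilon^2/2}$. It then uses Borel--Cantelli for the almost sure statement. For the expectation bound it takes a union bound over $i$ and integrates in $\varepsilon$, which is where the factor $d$ comes from. You instead use only orthogonality of increments (an $L^2$ bound) and the $L^2$ martingale strong law via Kronecker's lemma.

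What each route buys:
\begin{itemize}
\item The Azuma route gives sub-Gaussian tail bounds for the maximum, i.e.\ high-probability control rather than control in mean. Integrating $\min\{1,2de^{-n\varepsilon^2/2}\}$ instead of $2de^{-n\varepsilon^2/2}$ would already improve $d$ to order $\sqrt{\log(2d)}$.
\item Your route is more elementary, and it gives better dimension dependence than you state. Since $\max_i|M_i(n)|\le\bigl(\sum_i M_i(n)^2\bigr)^{1/2}$ and $\sum_i\E[M_i(n)^2]=\sum_{m<n}\E\bigl[1-\sum_i p_i(m)^2\bigr]\le n$, Jensen gives $\E\max_i|M_i(n)|\le\sqrt n$. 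That is a dimension-free $n^{-1/2}$ bound, better than both $d/\sqrt n$ and your $\sqrt{d/n}$.
\end{itemize}

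You also go slightly beyond the paper's proof by identifying the almost sure limit as $p_i^*$ via Lemma~\ref{lem:sampling_at_stationarity_gives_pstar}. You also correctly make explicit a requirement the paper uses tacitly: $p(m)$ must be a function of the projected chain, which is what $v$-invariance provides.
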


For any empirical distribution sampled on $d \geq 2$ values (each with nonzero probability), the fastest possible big-O convergence rate is $O(1/n)$, because for any distribution $q \in \Delta^d$, there must always be a subsequence along which $\|\mu(n) - q\|_{\operatorname{TV}} \geq C/n $ for some constant $C$. Indeed,
$$\| \mu(n) - q\|_{\operatorname{TV}} = \frac{1}{2} \sum_{i=1}^d \left| \frac{N_i}{n} - q_i \right| \geq \frac{1}{2n}\sum_{i=1}^d \min_{z \in \Z} |z - nq_i|.$$
If $q_i$ is irrational for some $i$, then $\{n q_i \pmod 1\}_{n=1}^\infty$ is dense in $[0,1]$, so there are infinitely many $n$ such that $\min_{z \in \Z} |z - nq_i| \geq 1/3$, giving an overall lower bound of $\|  \mu_n - q\|_{\operatorname{TV}} \geq \frac{1}{6n}$ for infinitely many $n$. On the other hand, if all $q_i$ are rational with least common denominator $m$, then for any $n$ not divisible by $m$, $\min_{z \in \Z} |z - nq_i| \geq 1/m$ for some $i$. Thus, for infinitely many $n$, we get $\| \mu_n - q\|_{\operatorname{TV}} \geq \frac{1}{2mn}$.

IID sampling converges at the rate $\Theta(1/\sqrt n)$ in expectation, as sampling from $q$ gives
$$\E[\| \mu_n - q\|_{\operatorname{TV}}] = \frac{1}{2\sqrt n} \sum_{i=1}^d \E \left[\left|\frac{N_i - nq_i}{\sqrt n}\right|\right],$$
where each expectation term is $\Theta(1)$ by the central limit theorem. However, for our self-balancing sampler, the
empirical distribution converges much faster than IID sampling. Theorem~\ref{thm:average_case_conv_rate} shows that
 in expectation, the convergence actually matches the fastest possible rate of $O(1/n)$, regardless of the choice
 of $\alpha_i \in (0,1)$.

 Such a rate commonly appears in such biased sampling procedures, and by specializing the classical
 Foster-Lyapunov drift condition (see e.g. \cite{meyn2009markov}) to the sequential sampling setting of interest, \cite{Hu_2016}
 provides a sufficient drift condition for this to occur for a broad family of sequential sampling rules. However, for our sampler, by developing a quantitative version of the classical Foster-Lyapunov
 drift conditions (Lemma~\ref{lem:drift_lemma}), we are able to get more explicit control on the leading order constants.

\begin{theorem}[Average-case convergence rate] \label{thm:average_case_conv_rate}
    For the self-balancing sampling rule, the empirical distribution $\mu(n)$ converges at a rate of $1/n$:
    $$\E\| \mu(n) - p^* \|_{\operatorname{TV}} = O(1/n).$$
    More granularly, for $\max_i \beta_i \leq 1$,
    $$\mathbb E \| \mu(n) - p^*\|_{\operatorname{TV}} \lesssim \begin{cases}
        \frac{1}{n}\sum_i \frac{1}{\beta_i} & \text{if $p(0) \neq p^*$,} \\
        \frac{1}{n}\sum_i \frac{1}{ \sqrt{\beta_i}} & \text{if $p(0) = p^*$.}
    \end{cases}$$
    where the constant depends only on $p(0)$, $p^*$, $\kappa := \frac{\max_i \beta_i}{\min_i \beta_i}$, and $d$.
\end{theorem}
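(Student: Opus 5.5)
We reduce the total-variation error to the size of the centered counts and then control those counts through a quantitative drift bound. Write $\|\mu(n)-p^*\|_{\operatorname{TV}} = \frac{1}{2n}\sum_i |Y_i(n)|$ with $Y_i(n)=N_i(n)-np_i^*$. It suffices to show $\sup_n \E|Y_i(n)| \lesssim \beta_i^{-1}$ in general, and $\lesssim \beta_i^{-1/2}$ when $p(0)=p^*$.

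\textbf{Step 1: reduce to the log-odds.} Set $Z_i(n) := \beta_i Y_i(n) - \log(p_i(0)/p_i^*)$. Then
\[
p_i(n) = \frac{p_i^* e^{-Z_i(n)}}{\sum_j p_j^* e^{-Z_j(n)}}.
\]
Since $\sum_i Y_i(n)=0$, the $Y_i$ cannot all share a sign. Bounding $|Y_i|$ therefore amounts to bounding the spread $\max_j Z_j-\min_j Z_j$, up to the additive constant $\max_i|\log(p_i(0)/p_i^*)|/\beta_i$. This constant is exactly the source of the $1/\beta_i$ term when $p(0)\neq p^*$. When $p(0)=p^*$ it vanishes, and one must exploit diffusive fluctuations to get $\beta_i^{-1/2}$.

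\textbf{Step 2: drift of a Lyapunov function.} Use
\[
V(n) = \log \sum_j p_j^* e^{-Z_j(n)} + \sum_j p_j^* Z_j(n),
\]
which is nonnegative by Jensen. Because $\sum_j p_j^*\beta_j Y_j$ is proportional to $\sum_j Y_j = 0$, the second term is constant, so effectively $V = \log \sum_j p_j^* e^{-Z_j} + \text{const}$. One step adds $\beta_{X}$ to $Z_X$ and subtracts $\beta_j p_j^*$ from every $Z_j$. A Taylor expansion then gives
\[
\E[V(n+1)-V(n)\mid \cF_n] \le -c\,\beta_{\min}\,\|p(n)-p^*\|^2 + C\beta_{\max}^2,
\]
using $\E[\beta_X e^{-\beta_X}\cdots]$ and the fact that under $X\sim p(n)$ the linear term is $\sum_i p_i(n)(\ldots)$. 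The negative term is largest when $p(n)$ is far from $p^*$. Near $p^*$, $\|p-p^*\|^2 \asymp \mathrm{Var}_{p^*}(Z)\asymp V$, so the drift is roughly $-c\beta V + C\beta^2$.

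\textbf{Step 3: quantitative drift lemma.} Apply Lemma~\ref{lem:drift_lemma}, the quantitative Pemantle--Rosenthal criterion, to $\sqrt{V}$ or to $V$ directly. This yields uniform-in-$n$ moment bounds: $\E V(n) \lesssim V(0) + \beta$, and tail bounds once $V$ is large, where the drift is order $-\beta$ because $\|p-p^*\|$ is bounded below. Since $V \gtrsim \min(\mathrm{spread}^2,\mathrm{spread})$, we obtain $\E\,\mathrm{spread}(Z) \lesssim \sqrt{V(0)+\beta}$ plus exponentially decaying corrections. Converting back through $|Y_i| \lesssim \mathrm{spread}(Z)/\beta_{\min}$, with constants in $\kappa$, gives $\E|Y_i| \lesssim \beta^{-1/2}$ when $V(0)=0$, i.e.\ $p(0)=p^*$. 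When $p(0)\ne p^*$, the initial offset $V(0)=O(1)$ contributes $\beta^{-1}$; a cleaner route is to bound $|Y_i|$ by $|\log(p_i(0)/p_i^*)|/\beta_i$ plus the contribution of $Z$, then apply the same estimate.

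\textbf{Main obstacle.} The hardest step is Step 2 in the regime where $V$ is large. There the quadratic approximation fails, and we need an increment bound valid uniformly. I would handle this by splitting into two regions, $V\le 1$ (quadratic drift) and $V>1$ (linear drift $\le -c\beta$). In both regions the increments are bounded by $\beta_{\max}$, which satisfies the bounded-jump hypothesis of the drift lemma. Keeping all constants dependent only on $\kappa$, $d$, $p^*$ and $p(0)$ requires care. Finally, summing over $i$ and dividing by $2n$ gives the two displayed bounds, and $O(1/n)$ for general $\beta$ follows by rescaling, since only the constant changes.
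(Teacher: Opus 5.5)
\textbf{There is a genuine gap, in Step 3.} Your Lyapunov function $V=\log\sum_j p_j^*e^{-Z_j}+\sum_j p_j^*Z_j$ is exactly $\mathrm{KL}(p^*\|p(n))$. Step 2 is not the obstacle you fear; it holds globally, not just near $p^*$. One step gives exactly $\Delta V=S_\beta^{-1}+\log\bigl(1-p_X(1-e^{-\beta_X})\bigr)$ with $S_\beta=\sum_j\beta_j^{-1}$. Hence $\E[\Delta V\mid\cF_n]\le -S_\beta^{-1}\chi^2(p(n)\|p^*)+\tfrac12\beta_{\max}^2$, where $\chi^2(p\|p^*)=\sum_i(p_i-p_i^*)^2/p_i^*$, and $|\Delta V|\le\beta_{\max}$.

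The problem is that Lemma~\ref{lem:drift_lemma} only uses a constant drift $-a$ above a level $b$ and returns $\E U_0+b+c+c^2/(2a)$. It cannot exploit a drift proportional to $-\beta V$.
\begin{itemize}
\item Applied to $U=V$: the jumps are of size $c\asymp\beta$, but the drift at level $V\asymp b$ is only $\asymp-\beta b$, so $c^2/a\asymp\beta/b$. The best choice $b\asymp\sqrt\beta$ gives $\sup_n\E V(n)\lesssim V(0)+\sqrt\beta$, not $V(0)+\beta$. When $p(0)=p^*$ this yields only $\mathrm{spread}(Z)\lesssim\beta^{1/4}$ and $\E|Y_i|\lesssim\beta^{-3/4}$, missing the claimed $\beta_i^{-1/2}$.
\item Applied to $U=\sqrt V$: hypothesis (i) fails. $V$ grows only linearly in the spread and its drift stays bounded as $V\to\infty$, so the drift of $\sqrt V$ tends to $0$.
\end{itemize}
The ``tail bounds'' and ``exponentially decaying corrections'' you invoke are exactly what is missing, and the lemma does not supply them. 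Your route does handle $p(0)\neq p^*$ for small $\beta$: taking $b=1$, $a\asymp\beta$, $c\asymp\beta$ gives $\E V=O(1)$.

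The paper avoids this by using a potential that is quadratic in the centered log-odds everywhere, $V_n=\sum_i\xi_i^2/\beta_i$ with $\xi_i=\beta_iY_i-c_i$, and applying the lemma to $U_n=\sqrt{1+V_n}$. This $U_n$ is $1$-Lipschitz for $\|x\|_\beta=(\sum_ix_i^2/\beta_i)^{1/2}$, so $c=\sqrt{\beta_{\max}}$. The tilting estimate $\sum_i\xi_i(p_i-p_i^*)\le-e^{-2}\min\{1,\|\xi\|_\infty^{-1}\}S_\beta^{-1}V_n$ then gives drift $\le-a$ with $a\asymp\beta_{\max}$ above an $O(1)$ threshold. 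So $c^2/a=O(1)$ in $U$-units, i.e.\ $\|\xi\|\lesssim\sqrt\beta$. To rescue the KL route, you would need to apply the lemma to something like $\phi(V)$, with $\phi(v)=\sqrt v$ for $v\le1$ and linear beyond. You would also need the sharper jump bound $|\Delta V|\le\beta_{\max}|p_X-p_X^*|+C\beta_{\max}^2$.

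\textbf{Two further problems.} First, the far-field drift of $\mathrm{KL}(p^*\|p(n))$ is negative only for $\beta_{\max}$ below a threshold depending on $d,\kappa,p^*$, not on the whole range $\max_i\beta_i\le1$. For example, take $d=4$ and $\beta\equiv1$. Along $p\to(\tfrac13,\tfrac13,\tfrac13,0)$, where $V\to\infty$, the exact drift tends to $\tfrac14+\log\bigl(1-\tfrac{1-e^{-1}}{3}\bigr)\approx0.013>0$. Moderate $\beta$ would therefore need a separate argument.

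Second, ``$O(1/n)$ for general $\beta$ follows by rescaling'' is not valid. Replacing $\beta$ by $c\beta$ keeps $p^*$ but changes the sampler itself, so a small-$\beta$ bound says nothing about large $\beta$. The paper's quadratic potential has no such restriction: its drift is of order $-\sqrt{\beta_{\max}V_n}$ for large $V_n$, whatever $\beta$ is.
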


As one might expect, increasing the bias $\beta$ tends to decrease the gap between the target sampling law $p^*$ and the empirical sampling distribution. To see why these two regimes arise, note when $p(0) = p^*$, the sampling law when $Y(0) = 0$ is already $p^*$. Hence it only has to correct the fluctuations about this stationary sampling law. When $p(0) \neq p^*$, the sampling law is $p^*$ only when $Y(n) = c_i/  \beta_i$, so the sampler must additionally correct this initial gap of order $\beta_i^{-1}$ caused by this mismatch.

We can analyze time-inhomogeneous biasings $\beta =\beta(n)$ (and the corresponding $\alpha = \alpha(n)$), as well; scaling $\beta$ by a constant leaves $p^*$ unchanged, so we may study the weak-biasing parameter regime of $\beta_i(n) = a(n)b_i$ for all $i$, where the $b_i$ are fixed and $a(n) \downarrow 0$ as $n \to \infty$. In this case, the sampler becomes $p_i(n) \propto p_i(0) e^{-a(n) b_i N_i}$.

\begin{theorem}[Convergence rate for time-inhomogeneous biasing] \label{thm:time-dep-conv-rate}
    If $\beta_i(n) = a(n)b_i$ for all $i$, where $a(n) \downarrow 0$ as $n \to \infty$ and $\sup_n a(n+1)^{-1} - a(n)^{-1} < \infty$, then
    $$\mathbb E \| \mu(n) - p^*\|_{\operatorname{TV}} \lesssim \begin{cases}
        \frac{1}{n} \sum_i \frac{1}{a(n)} & \text{if $p(0) \neq p^*$,} \\
        \frac{1}{n} \sum_i \frac{1}{\sqrt{a(n)}} & \text{if $p(0) = p^*$,}
    \end{cases}$$
    where the constant depends only on $p(0),d,\sup_n a(n+1)^{-1} - a(n)^{-1}$, and the $b_i$.
\end{theorem}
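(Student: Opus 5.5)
Our plan is to reduce the total-variation error to a bound on the mean-centered counts and then control those counts with a time-inhomogeneous Lyapunov drift argument. Since $\mu_i(n) - p_i^* = Y_i(n)/n$ with $Y_i(n) = N_i(n) - np_i^*$, and $p^*_i \propto b_i^{-1}$ does not depend on $n$, we have
\[
\E\|\mu(n)-p^*\|_{\operatorname{TV}} = \frac{1}{2n}\sum_i \E|Y_i(n)|.
\]
It therefore suffices to show $\E|Y_i(n)| \lesssim a(n)^{-1}$ in general, and $\E|Y_i(n)|\lesssim a(n)^{-1/2}$ when $p(0)=p^*$. The natural rescaled variable is $Z_i(n) := a(n) b_i Y_i(n) = \beta_i(n)Y_i(n)$. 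The sampling law is a softmax of $\log p_i(0) - Z_i(n)$ up to an additive constant, so the sampler prefers coordinates where $Z_i$ is small.

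We would work with the potential $V(n) = \log\sum_j p_j(0) e^{-Z_j(n)} + \sum_j p^*_j Z_j(n)$, or a quadratic surrogate of it, whose minimizer lies at $Z^\circ_i = \log(p_i(0)/p_i^*)+\text{const}$. Note that $\sum_j p_j^*Z_j$ is a multiple of $\sum_j Y_j=0$, so that term vanishes identically, and $V$ is convex and coercive on the hyperplane. The one-step change has two parts.

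\textbf{The sampling step at fixed $a(n)$.} This is the same computation as in Theorem~\ref{thm:average_case_conv_rate}. The expected gradient step gives a negative drift of order $a(n)\,\mathrm{dist}(Z,Z^\circ)\wedge a(n)$. The second-order noise term is $O(a(n)^2)$.

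\textbf{The rescaling step from $a(n)$ to $a(n+1)$.} This multiplies $Z$ by $a(n+1)/a(n) = 1 - a(n+1)\bigl(a(n+1)^{-1}-a(n)^{-1}\bigr)$, which is $1 - O(a(n+1))$ by the hypothesis $\sup_n (a(n+1)^{-1}-a(n)^{-1})=:L<\infty$. Shrinking $Z$ toward zero moves it by $O(a(n)|Z|)$. Relative to $Z^\circ$ this is a perturbation of size $O(a(n)(|Z - Z^\circ| + |Z^\circ|))$. When $p(0)=p^*$ we have $Z^\circ=0$, and the rescaling only helps or costs $O(a(n)|Z|)$, which the contraction absorbs.

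With these drift and increment estimates in hand, we would apply the quantitative drift criterion Lemma~\ref{lem:drift_lemma} with slowly varying parameters. This should give $\E\,\mathrm{dist}(Z(n),Z^\circ) \lesssim \sqrt{a(n)}$ from the noise-to-drift balance $a^2/a$ on the quadratic scale. In the case $p(0)\neq p^*$ it additionally gives $|Z^\circ| = O(1)$. Translating back via $Y_i = Z_i/(a(n)b_i)$ yields $\E|Y_i| \lesssim a(n)^{-1}(|Z^\circ|+\sqrt{a(n)})$. This is $O(a(n)^{-1})$ in general and $O(a(n)^{-1/2})$ when $p(0)=p^*$, and summing over $i$ gives the claim.

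The main obstacle is the inhomogeneity. Lemma~\ref{lem:drift_lemma} is presumably stated for a fixed chain, while here both the drift strength and the noise shrink with $n$. We would handle this by running a scalar recursion for $u(n) = \E\,(V(n) - V_{\min})$ of the form
\[
u(n+1) \le (1 - c\,a(n))\,u(n) + C a(n)^2 + C' a(n)\bigl(a(n)^{-1}-a(n+1)^{-1}\bigr)\text{-type terms}.
\]
We would then show by induction that $u(n)\le K a(n)$. This induction closes because $a(n+1)\ge a(n)/(1+La(n))$, so $a$ decays no faster than about $1/(Ln)$, and the slowly varying ratio $a(n+1)/a(n)\to1$ is what makes the induction work. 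We expect the constants to depend on $L$, $d$, $p(0)$ and the $b_i$, as stated in the theorem.
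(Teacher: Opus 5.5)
Your reduction to bounding $\E|Z_i(n)|$ with $Z_i=a(n)b_iY_i$ is fine, and so is your observation that the rescaling step is a contraction by $a(n+1)/a(n)$. The gap is in the step meant to carry the proof: the scalar recursion $u(n+1)\le(1-c\,a(n))u(n)+Ca(n)^2$ for $u(n)=\E(V(n)-V_{\min})$. To get it from a one-step drift estimate you need $\E[V(n+1)-V(n)\mid\cF_n]\le -c\,a(n)\,(V(n)-V_{\min})+Ca(n)^2$ in every state, and this is false for your potential. For $V=\log\sum_jp_j(0)e^{-Z_j}$ the gradient is minus the softmax vector, and the sampling drift is $-a(n)\sum_ib_i(p_i-p_i^*)^2+O(a(n)^2)$. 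This is comparable to $-a(n)(V-V_{\min})$ only in a bounded neighbourhood of $Z^\circ$. Everywhere it is at most $Ca(n)$ in absolute value, while $V-V_{\min}$ grows linearly. A quadratic surrogate has the same defect: its drift has size $a(n)|Z-Z^\circ|$ against $V\asymp|Z-Z^\circ|^2$. Taking expectations in a bound of the form $-c\,a\min\{V-V_{\min},1\}$ does not give $-c\,a\,u(n)$, because $\E\min\{X,1\}\le\min\{\E X,1\}$ goes the wrong way. A rare excursion with $|Z-Z^\circ|\gtrsim1$ can carry most of $u(n)$. When $p(0)=p^*$ you need $u(n)\lesssim a(n)$, so you must show such excursions have probability $O(a(n))$, and nothing in the proposal does that. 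In the homogeneous proof this saturation is absorbed by applying Lemma~\ref{lem:drift_lemma} to $\sqrt{1+V_n}$. But that lemma has fixed parameters, and the recursion you substitute for it discards exactly this feature.

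The missing ingredient is an exponential Lyapunov function, whose drift is proportional to its excess over the minimum everywhere, not just near $Z^\circ$. When $p(0)\neq p^*$, the paper uses the odds-ratio sum $V_n=\sum_{i,j}p_j(n)/p_i(n)$. It satisfies $\E[V_{n+1}]\le(1-\frac{C''}{d}a(n))\E[V_n]+Ca(n)$, and a logarithm plus Jensen finishes. When $p(0)=p^*$, it uses $W_n=e^{a(n)Q_n/8}$ with $Q_n=\sum_ib_iY_i(n)^2$. Its multiplicative drift is controlled by Lemma~\ref{lem:tilting-bound}, and then $\sqrt{Q_n}\le2a(n)^{-1/2}W_n$.

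Within your framework, simply dropping the logarithm also works. Take $G=\sum_jp_j(0)e^{-Z_j}$ and $S_b=\sum_jb_j^{-1}$. The sampling step at fixed $a$ gives exactly $\E[G(Z')\mid\cF_n]=e^{a/S_b}G\bigl[1-\sum_kp_k^2(1-e^{-ab_k})\bigr]\le G\bigl(1-a\sum_kb_k(p_k-p_k^*)^2+Ca^2\bigr)$. One can check that $G\sum_kb_k(p_k-p_k^*)^2\ge c\,(G-G_{\min})$ on the whole hyperplane. When $p(0)=p^*$, convexity gives $G(rZ)-1\le r\,(G(Z)-1)$ under rescaling. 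Your induction $u(n)\le Ka(n)$ then closes, and $G-1\gtrsim\min\{|Z|^2,|Z|\}$ converts it to $\E|Z|\lesssim\sqrt{a(n)}$.

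Separately, your intermediate claim $\E\,\mathrm{dist}(Z(n),Z^\circ)\lesssim\sqrt{a(n)}$ is false when $p(0)\neq p^*$. Rescaling contracts toward $0$ rather than toward $Z^\circ$, which adds a forcing of order $a(n)|Z^\circ|$ per step. For instance, with $a(n)=1/(Ln)$ the process settles at distance of order one from $Z^\circ$, and $u(n)\le Ka(n)$ fails. This is harmless, since $\E|Z|=O(1)$ is all that case requires.
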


For example, if we take $p(0) = p^*$ and $\beta(n) = n^{-\gamma} b_i$ for $0 < \gamma \leq 1$ (so that $\alpha_i(n) = e^{-n^{-\gamma}b_i} \approx 1 - n^{-\gamma}b_i$), we get a sampler with convergence behavior that interpolates between the rates of iid sampling and deterministic sampling:
    $$\mathbb E \| \mu(n) - p^*\|_{\operatorname{TV}} \lesssim n^{-1+\gamma/2}.$$
    The scaling $\beta_i(n) = b_i/n$ is also the regime in which the transition probabilities become fixed functions of the empirical occupation measure $p_i(n) \propto p_i(0) e^{-b_i\mu_i(n)}$, bringing the model into the stochastic-approximation scale commonly considered for negatively reinforced urns and self-repellent nonlinear Markov chains.

The following shows that even pathwise, the self-balancing sampler still improves upon the $O(n^{-1/2})$ convergence rate of IID sampling, staying within distance $O(\log n/n)$ of its target $p^*$ almost surely.

\begin{theorem}[Pathwise convergence rate] \label{thm:limit_probabilities}
    For the self-balancing sampling rule, the empirical distribution
    $$\| \mu(n) - p^*\|_{\operatorname{TV}} \xrightarrow{a.s.} 0$$
    as $n \to \infty$. Moreover, almost surely,
    $$\| \mu(n) - p^*\|_{\operatorname{TV}} \lesssim \frac{\log n}{n} \sum_{i=1}^d \frac{1}{\beta_i}$$
    for all but finitely many $n$.
\end{theorem}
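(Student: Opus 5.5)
We will reduce the statement to a pathwise bound on the mean-centered counts $Y_i(n) = N_i(n) - np_i^*$. Since
\[\|\mu(n)-p^*\|_{\operatorname{TV}} = \frac{1}{2n}\sum_i |Y_i(n)|,\]
it suffices to show that almost surely $\max_i \beta_i|Y_i(n)| \lesssim \log n$ for all large $n$, with the constants arranged so that $|Y_i(n)| \lesssim \beta_i^{-1}\log n$. Almost-sure convergence to zero then follows immediately. Because $\sum_i Y_i(n)=0$, it is enough to control the spread $M(n):=\max_i \beta_i Y_i(n) - \min_i \beta_i Y_i(n)$, together with the constant offsets $\log p_i(0)$ that shift the equilibrium point. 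This spread is, up to those constants, the oscillation of the projection-centered counts $\widetilde Y(n)$.

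The key step is an exponential Lyapunov function. We take
\[V(n) = \sum_i p_i^* e^{\lambda \beta_i Y_i(n)}\]
for a small $\lambda>0$, or alternatively the softmax-type potential $\log\sum_i e^{\beta_iY_i - \log p_i(0)}$. We then show a drift condition: there exist $K$ and $\rho<1$ such that $\E[V(n+1)\mid\cF_n]\le \rho V(n)+K$ whenever $M(n)$ exceeds a threshold.

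The mechanism is as follows. When some coordinate $i$ has $\beta_iY_i$ much larger than the others, the sampling probability $p_i(n) \propto p_i(0)e^{-\beta_iY_i}$ is exponentially small. The increment of $\beta_iY_i$ is then $-\beta_ip_i^*$ with probability close to $1$, while up-moves have exponentially small probability. A one-step computation, using $\beta_i p_i^*$ equal to the common value $(\sum_j\beta_j^{-1})^{-1}$, gives the contraction. Iterating yields $\sup_n \E V(n) \le C<\infty$. This is essentially the quantitative drift argument behind Theorem~\ref{thm:average_case_conv_rate}, via Lemma~\ref{lem:drift_lemma}, so we will reuse it in exponential form.

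From the uniform moment bound, Markov's inequality gives
\[\P\bigl(\max_i \lambda\beta_iY_i(n) > 2\log n\bigr) \le C n^{-2},\]
which is summable. The Borel--Cantelli lemma then gives $\max_i \beta_i Y_i(n)\le (2/\lambda)\log n$ eventually. Since $\sum_i Y_i=0$, a one-sided bound on every coordinate also controls the negative parts: each $|Y_i|\le \sum_j Y_j^+ \lesssim \log n\sum_j\beta_j^{-1}$. This yields the stated bound $\frac{\log n}{n}\sum_i\beta_i^{-1}$. Only upper tails are needed, since the push-down is the strong direction; this is why the exponential is applied to $+\beta_iY_i$.

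We expect the main obstacle to be verifying the one-step exponential drift uniformly in $d$ and in the $\beta_i$, with $\lambda$ independent of $n$. The difficulty is the case where several coordinates are simultaneously large, which slows the descent of any single one. We will handle this by choosing $\lambda \le 1$ and expanding
\[e^{\lambda \beta_i(\mathbf 1\{X=i\}-p_i^*)}-1 \le \lambda\beta_i(\mathbf 1\{X=i\}-p_i^*) + O(\lambda^2\beta_i^2).\]
The conditional mean of the linear term, weighted by $e^{\lambda\beta_iY_i}$, equals
\[\lambda\sum_i p_i^*\beta_i e^{\lambda\beta_iY_i}\,(p_i(n)-p_i^*).\]
This expression is negative and proportional to $V$ when $V$ is large, because the large terms of $V$ correspond exactly to coordinates with $p_i(n)<p_i^*$ up to the $\log p_i(0)$ offsets. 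If the constants become delicate, we fall back on a cruder polynomial moment bound for $M(n)$. Such a bound would still give $O(n^{\epsilon})$ pathwise, but it would not give $\log n$, so the exponential version is the one to push through.
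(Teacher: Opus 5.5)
Your proposal is correct, but it follows a genuinely different route from the paper's.

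\textbf{The paper's route.} The paper proves no moment bound at all. It works with the pairwise log-odds $R_{i,j}(n)=\beta_iN_i(n)-\beta_jN_j(n)$. On $\{R_{i,j}(n)\ge 2\log n\}$ the likelihood ratio gives $p_i(n)\le \frac{p_i(0)}{p_j(0)}n^{-2}$, so by Borel--Cantelli, index $i$ is sampled while $R_{i,j}$ already exceeds $2\log n$ only finitely often. Since $R_{i,j}$ moves up only when $i$ is sampled, and then by at most $\max_k\beta_k$, this gives $|R_{i,j}(n)|\le 2\log n+2\max_k\beta_k$ eventually. A linear-algebra step then converts this into the TV bound.

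\textbf{Your route.} You establish uniform-in-time exponential moments and apply Markov plus Borel--Cantelli at each fixed $n$. Your reduction, via $\sum_iY_i=0$, to the upper tail of $\max_i\beta_iY_i(n)$ is sound. The drift step is easier than you anticipate, because it can be computed exactly. With $S=\sum_j\beta_j^{-1}$,
\[
\mathbb{E}[V(n+1)\mid\mathcal{F}_n]=e^{-\lambda/S}\Bigl(V(n)+\sum_i p_i^*\bigl(e^{\lambda\beta_i}-1\bigr)\,p_i(n)\,e^{\lambda\beta_iY_i(n)}\Bigr).
\]
For $\lambda\le 1$ one has $p_i(n)e^{\lambda\beta_iY_i(n)}\le p_i(0)/\min_j p_j(0)$, because $\min_j\beta_jY_j(n)\le 0$. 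Hence $\mathbb{E}[V(n+1)\mid\mathcal{F}_n]\le \rho V(n)+K$ with $\rho=e^{-\lambda/S}$ holds globally, and the recursion $\mathbb{E}V(n+1)\le\rho\,\mathbb{E}V(n)+K$ simply iterates. You need no Taylor expansion and no threshold. You also need no appeal to Lemma~\ref{lem:drift_lemma}, which in any case requires bounded increments, and $V$ does not have them. In particular you may take $\lambda=1$. This gives $\max_i\beta_iY_i(n)\le 2\log n$ eventually, hence the stated bound with constant $2$, independent of $\beta$. By contrast, the crude second-order expansion you sketch would force $\lambda\lesssim 1/(S\max_i\beta_i^2)$. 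That would put $\beta$-dependence into the constant and blur the $\sum_i\beta_i^{-1}$ scaling.

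\textbf{What each route buys.} The paper's argument is shorter and purely pathwise. It uses only the one-step likelihood ratio and the up-crossing structure of $R_{i,j}$. Yours gives the stronger distributional statement $\sup_n\mathbb{E}\sum_ip_i^*e^{\beta_iY_i(n)}<\infty$. Via Jensen, this also shows that $\mathbb{E}\max_i\beta_iY_i(n)$ is bounded uniformly in $n$, and uniformly in $\beta$ when $\max_i\beta_i\le 1$. As a byproduct it recovers the $p(0)\neq p^*$ case of Theorem~\ref{thm:average_case_conv_rate}.
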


Hence, even pathwise, the empirical distribution converges at a rate of $O(\log n/n)$, which is only a logarithmic factor worse than the optimal $O(1/n)$ rate,
and drastically improves over the rate of $O(1/\sqrt n)$ for IID sampling.

\subsection{Interpretation as an optimization problem}

\label{sec:optimization_problem}

Here, we formalize the interpretation of the self-balancing sampler as optimizing the trade-off between 
the convergence rate of the empirical distribution and its unpredictability at each step.
 To this end, we show that this sampling rule solves a greedy optimization problem, attempting 
 to maximize both these quantities at every step. Proofs can be found in Appendix~\ref{appendix:2_4_optimization_proofs}.

Let $H(q) \coloneqq -\sum_{i=1}^d q_i \log q_i$ denote the Shannon entropy, let $H(q,p) \coloneqq - \sum_{i=1}^d q_i \log p_i$ denote the cross-entropy, 
and let $\operatorname{KL}(q \| p) \coloneqq H(q,p) - H(q) = \sum_{i=1}^d q_i \log \frac{q_i}{p_i}$.

\begin{theorem} \label{thm:constant_optimization_problem}
    Let $L(n) \coloneqq \frac{1}{2} \sum_{i=1}^d (N_i(n) - \frac{n}{d})^2$ denote the quadratic loss for the empirical counts and $D(q) \coloneqq  \mathbb E[ L(n+1) - L(n) \mid N(n), q]$ denote the one-step increase in $L$ when sampling $X_{n+1} \sim q$. Then the unique solution to the optimization problem
    $$\argmax_{q \in \Delta^d} \left(H(q) - \beta D(q)\right)$$
    is the self-balancing sampling method $p_i(n) \propto e^{-\beta N_i(n)}$, initialized at the uniform distribution.
\end{theorem}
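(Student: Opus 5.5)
We first compute $D(q)$ explicitly and show it is affine in $q$. Conditional on $N(n)$, if $X_{n+1}=j$ then $N_j$ increases by one and $n$ increases by one, so every centering term $n/d$ shifts by $1/d$. Write $Y_i := N_i(n) - n/d$. After sampling $j$, the new centered count is $Y_i + \mathbf{1}\{i=j\} - 1/d$. Expanding the square gives
\[
L(n+1)-L(n) = \sum_{i=1}^d Y_i\bigl(\mathbf{1}\{i=j\}-\tfrac1d\bigr) + \tfrac12\sum_{i=1}^d \bigl(\mathbf{1}\{i=j\}-\tfrac1d\bigr)^2 .
\]
Since $\sum_i Y_i = 0$, the first sum equals $Y_j$. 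The second sum is a constant $C_d = \tfrac12(1-1/d)$, independent of $j$. Averaging over $j\sim q$ yields
\[
D(q) = \sum_{j=1}^d q_j Y_j + C_d = \sum_{j=1}^d q_j N_j(n) - \tfrac{n}{d} + C_d .
\]
Hence, on $\Delta^d$, $D(q) = \langle q, N(n)\rangle + \text{const}$, where the constant does not depend on $q$.

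The problem therefore reduces to maximizing $F(q) := H(q) - \beta\langle q, N(n)\rangle$ over the simplex. This is the classical Gibbs variational principle. Set $Z := \sum_j e^{-\beta N_j(n)}$ and $g_j := e^{-\beta N_j(n)}/Z \in \Delta_+^d$. Then
\[
F(q) = -\sum_j q_j \log q_j + \sum_j q_j \log(g_j Z) = \log Z - \operatorname{KL}(q\,\|\,g).
\]
Gibbs' inequality gives $\operatorname{KL}(q\|g)\ge 0$, with equality if and only if $q=g$, because $g$ has full support. This proves existence and uniqueness of the maximizer at once. (Equivalently, $F$ is strictly concave and continuous on the compact simplex, and the Lagrange condition $-\log q_j - 1 - \beta N_j = \lambda$ forces $q_j\propto e^{-\beta N_j}$.)

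Finally, we match the maximizer with the sampler. The self-balancing rule \eqref{eqn:self_balancing_sampler_transition_probs} with uniform $p(0)$ and $\beta_i\equiv\beta$ is exactly $p_i(n)\propto e^{-\beta N_i(n)}$, which is $g$. The only step needing care is the bookkeeping in the first paragraph: the shift of the centering $n/d$ must be tracked, and the identity $\sum_i Y_i=0$ is what cancels the $q$-dependence except through the linear term. Once linearity of $D$ is established, the rest is routine. We also use the convention $0\log 0=0$ at the boundary of the simplex, and the KL identity handles this automatically.
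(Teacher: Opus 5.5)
Your proof is correct and follows essentially the paper's route: the paper obtains this result as the uniform-$\beta$, uniform-$p(0)$ case of Theorem~\ref{thm:general_optimization_problem}. There it expands $D_\beta$ directly to isolate the $q$-dependent terms $\sum_i \beta_i (N_i(n)+\tfrac12) q_i$, which for equal $\beta_i$ reduce to $\beta\langle q, N(n)\rangle$ plus a constant, and then solves the entropy-regularized linear problem with a Lagrange multiplier. The only difference is that you use centered counts with $\sum_i Y_i = 0$ for the bookkeeping, and you close with the Gibbs identity $F(q)=\log Z-\operatorname{KL}(q\,\|\,g)$ rather than a stationarity condition, which gives uniqueness and global optimality on all of $\Delta^d$, including its boundary, in one step.
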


Theorem~\ref{thm:constant_optimization_problem} is a special case of the following more general version, which 
allows the $\alpha_i$ (and hence the $\beta_i$) to be non-constant in $i$. 
\begin{theorem} \label{thm:general_optimization_problem}
    Let $L_\beta(n) \coloneqq \frac{1}{2} \sum_{i=1}^d \beta_i (N_i(n) - np_i^*)^2$ denote the weighted quadratic loss for the empirical counts and $D_\beta(q) \coloneqq  \mathbb E[ L_\beta(n+1) - L_\beta(n) \mid N(n), q]$ denote the one-step increase in $L_\beta$ when sampling $X_{n+1} \sim q$. Then the unique solution to the optimization problem
    $$\argmax_{q \in \Delta^d} \left(H(q) - H(q,p(0)) - D_\beta(q)\right) = \argmin_{q \in \Delta^d} \left(\operatorname{KL}(q \| p(0)) + D_\beta(q)\right)$$
    is the $1/2$-offset self-balancing sampling method $p_i(n) \propto p_i(0)e^{-\beta_i (N_i(n)+1/2)}$, initialized at a tilted distribution $\propto p_i(0) e^{-\beta_i/2}$.
\end{theorem}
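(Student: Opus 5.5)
The plan is to compute $D_\beta(q)$ explicitly and observe that it is affine in $q$. Once that is done, the problem reduces to minimizing KL divergence plus a linear functional over the simplex. That is the classical Gibbs variational principle, with a unique minimizer given by an exponential tilt of $p(0)$.

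\textbf{Computing the one-step increase.} Write $Y_i = N_i(n) - np_i^*$. If $X_{n+1}=j$, then $Y_j$ increases by $1-p_j^*$ and every other $Y_i$ with $i\ne j$ changes by $-p_i^*$. Write $e_j$ for the $j$-th standard basis vector; then the increment of $Y$ is $e_j - p^*$, and
\[
L_\beta(n+1)-L_\beta(n) = \sum_i \beta_i Y_i\,(\delta_{ij}-p_i^*) + \tfrac12\sum_i \beta_i(\delta_{ij}-p_i^*)^2 .
\]
Taking the expectation over $j\sim q$ gives $D_\beta(q)=\sum_j q_j c_j$, where
\[
c_j = \beta_j Y_j + \tfrac12\beta_j(1-2p_j^*) + \Big(-\sum_i \beta_i Y_i p_i^* + \tfrac12\sum_i\beta_i (p_i^*)^2\Big).
\]
The parenthesized term does not depend on $j$. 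Since $\sum_j q_j=1$, it adds a constant to the objective and does not affect the argmin.

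\textbf{Simplifying the coefficients.} Next I use $p_j^* = \beta_j^{-1}/\sum_k\beta_k^{-1}$, so that $\beta_j p_j^* = 1/\sum_k \beta_k^{-1}$ is independent of $j$. Then $\beta_j Y_j = \beta_j N_j - n/\sum_k\beta_k^{-1}$ and $\beta_j p_j^*$ are both $j$-independent shifts, and up to such constants
\[
c_j \equiv \beta_j N_j(n) + \tfrac12\beta_j = \beta_j\big(N_j(n)+\tfrac12\big).
\]
This identity $\beta_jp_j^*=\text{const}$ is exactly why the weighted loss with target $p^*$ produces a clean exponent. It is the step I would check most carefully.

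\textbf{Solving the reduced problem.} The objective is now $\operatorname{KL}(q\|p(0))+\sum_j q_j c_j$, up to an additive constant. Define $g_j \propto p_j(0)e^{-c_j}$, normalized by $Z=\sum_j p_j(0)e^{-c_j}$. A direct rearrangement gives
\[
\operatorname{KL}(q\|p(0))+\langle q,c\rangle = \operatorname{KL}(q\|g) - \log Z .
\]
By Gibbs' inequality this is uniquely minimized at $q=g$, since KL divergence is nonnegative and zero only at equality. Strict convexity of KL on the simplex gives the same uniqueness. This also confirms that the two displayed argmax/argmin formulations coincide, because $H(q)-H(q,p(0))=-\operatorname{KL}(q\|p(0))$.

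\textbf{Identifying the sampler.} The minimizer is therefore $p_i(n)\propto p_i(0)e^{-\beta_i(N_i(n)+1/2)}$. At $n=0$ this equals $p_i(0)e^{-\beta_i/2}$ normalized, which is the stated tilted initialization. From there, each step multiplies coordinate $X_n$ by $e^{-\beta_{X_n}}$ and renormalizes, so this is exactly the self-balancing update.
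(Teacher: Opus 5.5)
Your proposal is correct and follows essentially the paper's route: expand $D_\beta(q)$, observe that it is affine in $q$ with coefficients $\beta_i(N_i(n)+\tfrac12)$ up to $i$-independent constants (via $\beta_i p_i^*\equiv 1/\sum_k\beta_k^{-1}$), and solve the resulting KL-plus-linear problem. The only difference is that you finish with the Gibbs identity $\operatorname{KL}(q\|p(0))+\langle q,c\rangle=\operatorname{KL}(q\|g)-\log Z$ rather than the paper's Lagrange-multiplier stationarity condition; this has the small advantage of giving global uniqueness over all of $\Delta^d$, including its boundary, directly.
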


The entropy-regularized optimization problem found in Theorem \ref{thm:general_optimization_problem} appears naturally in the context of optimal control and reinforcement learning. In this setting, such regularization is used to quantify the classical exploration versus exploitation trade-off \cite{so2022maximumentropydifferentialdynamic}, \cite{WangZariphopoulouZhou2020}, \cite{haarnoja2018softactorcriticoffpolicymaximum}. Moreover, as \cite{NIPS2006_d806ca13} observes, such exponential reweighting naturally appears as a result of this entropy penalty. We briefly summarize some of the main results in discrete-time in Section \ref{appendix:optimal_control} below, and show that the self-balancing sampler and the above entropy-regularized optimization problem arise naturally in this framework when you penalize imbalance using mean-squared error.

The curious offset of $1/2$ in the counts appears to be an effect of greedy optimization over \emph{discrete} time steps with a quadratic cost. Indeed, $\frac{\beta_i}{2}((x+1)^2 - x^2) = \beta_i x + \frac{\beta_i}{2}$. When all $\beta_i$ are equal, this factor is absorbed into the normalization of the density. Theorem~\ref{thm:instantaneous_optimization} gives an ``instantaneous'' optimization perspective, which does not include the $1/2$ offset.

\begin{theorem} \label{thm:instantaneous_optimization}
    Let $L_\beta(x) \coloneqq \frac{1}{2} \sum_{i=1}^d \beta_i (x_i - n p_i^*)^2$ denote the weighted quadratic loss as a function of counts $x$. Then the unique solution to the optimization problem
    $$\argmin_{q \in \Delta^d} \left(\langle \nabla L_\beta(N(n)),q \rangle + \operatorname{KL}(q \|p(0)) \right)$$
    is the self-balancing sampling method $p_i(n) \propto p_i(0)e^{-\beta_i N_i(n)}$, initialized at the distribution $p_i(0)$.

    \label{thm:mirror_descent_optimization}
\end{theorem}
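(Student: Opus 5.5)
The plan is to compute the linear term explicitly and then recognize the objective as a KL divergence to a Gibbs distribution, which is minimized uniquely at that distribution.

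First, compute the gradient. We have
\[
\partial_i L_\beta(x) = \beta_i (x_i - n p_i^*),
\]
so
\[
\langle \nabla L_\beta(N(n)), q\rangle = \sum_i q_i \beta_i N_i(n) - n\sum_i q_i \beta_i p_i^*.
\]
By definition $p_i^* = \beta_i^{-1}/\sum_j \beta_j^{-1}$, hence $\beta_i p_i^* = (\sum_j \beta_j^{-1})^{-1} =: \lambda$ does not depend on $i$. The second term therefore equals $n\lambda \sum_i q_i = n\lambda$, a constant on the simplex, and can be dropped from the argmin. This cancellation is the only step that uses the specific form of $p^*$.

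Next, let $Z = \sum_j p_j(0) e^{-\beta_j N_j(n)}$ and define the Gibbs law
\[
\hat p_i = p_i(0) e^{-\beta_i N_i(n)}/Z.
\]
By \eqref{eqn:self_balancing_sampler_transition_probs}, $\hat p$ is exactly the self-balancing law $p(n)$. Then
\[
\log \frac{q_i}{p_i(0)} + \beta_i N_i(n) = \log \frac{q_i}{\hat p_i} - \log Z.
\]
Multiplying by $q_i$ and summing gives
\[
\sum_i q_i \beta_i N_i(n) + \operatorname{KL}(q\|p(0)) = \operatorname{KL}(q\|\hat p) - \log Z.
\]
So the objective equals $\operatorname{KL}(q\|\hat p)$ plus a constant independent of $q$. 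This identity holds on all of $\Delta^d$, including at boundary points, using the convention $0\log 0 = 0$, since $p(0)$ and $\hat p$ both have full support.

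Finally, Gibbs' inequality says $\operatorname{KL}(q\|\hat p)\ge 0$, with equality if and only if $q = \hat p$. This follows from strict convexity of $t\mapsto t\log t$, or from Jensen's inequality applied to $\log$. Hence the unique minimizer over $\Delta^d$ is $q = \hat p = p(n)$, which is the self-balancing sampler initialized at $p(0)$, as claimed.

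No step is a real obstacle. The only points needing care are noticing that the $np_i^*$ term becomes constant because $\beta_i p_i^*$ is independent of $i$, and handling boundary points of the simplex in the KL identity.
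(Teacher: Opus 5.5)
Your proof is correct, but it takes a different route from the paper's.

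The paper computes the gradient, introduces a Lagrange multiplier for the constraint $\sum_i q_i = 1$, and solves the stationarity condition $\beta_i N_i(n) - n/\sum_j \beta_j^{-1} + \log q_i + 1 - \log p_i(0) + \lambda = 0$. The constant $n\beta_i p_i^*$ is absorbed into the normalization, exactly as in your cancellation step. You instead rewrite the objective as $\operatorname{KL}(q\|\hat p) - \log Z + n\lambda$ and invoke Gibbs' inequality. This is the Gibbs variational principle, which the paper mentions in the main text but does not use in its proof.

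Your version gives more. It certifies global minimality and uniqueness over the closed simplex $\Delta^d$ in one step, with no calculus. The Lagrangian computation, as written, only locates an interior stationary point. It leaves implicit two facts needed to conclude that this point is the unique minimizer: the strict convexity of $q \mapsto \operatorname{KL}(q\|p(0))$, and the exclusion of minimizers on the boundary of the simplex.

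The paper's calculation is a quick way to find the form of the optimizer. Your identity is the cleaner complete argument.
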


It is somewhat remarkable that the same sampling rule characterized by a global invariance rule (as in Proposition~\ref{prop:charactizing_v_invariance}) can also be uniquely characterized by a myopic, one-step optimization. However, Theorem~\ref{thm:instantaneous_optimization} can be viewed as an instance of the usual Gibbs variational principle \cite{https://doi.org/10.1002/cpa.3160280102}: a linear cost regularized by relative entropy has an exponentially weighted optimizer. The cost assigned to index $i$ is $\beta_i (N_i(n) - np_i^*)$, and since $\beta_i p_i^*$ is constant in $i$, the target-centering term does not affect the optimizer after normalization. The resulting policy therefore depends only on the weighted counts $\beta_i N_i(n)$.

Finally, we provide a third interpretation of the sampler: we observe that the self-balancing sampler implements stochastic mirror descent \cite{doi:10.1137/070704277}, minimizing the objective $\ell_\beta(x) \coloneqq \frac{1}{2} \sum_{i=1}^d \beta_i (x_i - p_i^*)^2$. The sampler uses the unbiased stochastic gradient $\beta_{X_n} e_{X_n} - \frac{1}{\sum_{j=1}^d \beta_j^{-1}} \mathbf 1$, where $e_1,\dots,e_d$ are the standard basis vectors:
$$\mathbb E\left[\beta_{X_n}e_{X_n} - \tfrac{1}{\sum_{j=1}^d \beta_j^{-1}} \mathbf 1 \mid p(n-1)\right] = \sum_{i=1}^d \beta_i(p_i(n-1) - p_i^*) e_i = \nabla \ell_\beta(p(n-1)).$$

\begin{theorem}[Stochastic mirror descent formulation]\label{thm:stochastic_mirror_descent}
    For fixed $h > 0$, the unique solution to the optimization problem
    $$\argmin_{q \in \Delta^d} \left(h \left\langle \beta_{X_n} e_{X_n} - \tfrac{1}{\sum_{j=1}^d \beta_j^{-1}} \mathbf 1, q \right \rangle + \operatorname{KL}(q \| p(n-1)) \right)$$
    is the self-balancing sampler $p_i(n) \propto p_i(0) e^{-h\beta_i N_i(n)}$.
\end{theorem}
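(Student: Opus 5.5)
The proof reduces to the Gibbs variational principle and then an induction on $n$. Write $g \coloneqq \beta_{X_n} e_{X_n} - \frac{1}{\sum_j \beta_j^{-1}}\mathbf 1$. For a fixed vector $g\in\R^d$ and $p\in\Delta_+^d$, I will first show that
$$F(q) \coloneqq h\langle g,q\rangle + \operatorname{KL}(q\|p)$$
has the unique minimizer $q_i^\star \propto p_i e^{-h g_i}$ on $\Delta^d$.

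The cleanest route is the identity
$$F(q) = \operatorname{KL}(q\|q^\star) - \log Z, \qquad Z=\sum_j p_j e^{-hg_j}.$$
It follows by writing $\log q_i^\star = \log p_i - h g_i - \log Z$ and expanding. Since $\operatorname{KL}(q\|q^\star)\geq 0$, with equality iff $q=q^\star$ (Gibbs' inequality, i.e. strict convexity of $x\log x$), both existence and uniqueness follow. This also handles boundary points of $\Delta^d$: there $\operatorname{KL}(q\|p)$ is finite, but $q\neq q^\star$ because $q^\star$ has full support.

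Next I apply this with $p=p(n-1)$. The constant component $-\frac{1}{\sum_j\beta_j^{-1}}\mathbf 1$ of $g$ contributes the factor $e^{h/\sum_j\beta_j^{-1}}$ to every coordinate, so it cancels after normalization. What remains is
$$q_i^\star \propto p_i(n-1)\,e^{-h\beta_i\mathbbm 1\{X_n=i\}}.$$
This is exactly the downweight-and-renormalize step, with $\alpha_i=e^{-h\beta_i}$ applied only to the sampled coordinate.

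Finally I induct on $n$. Assume $p_i(n-1)\propto p_i(0)e^{-h\beta_i N_i(n-1)}$. Since $N_i(n)=N_i(n-1)+\mathbbm 1\{X_n=i\}$, multiplying by $e^{-h\beta_i\mathbbm 1\{X_n=i\}}$ and renormalizing (proportionality constants are irrelevant) gives $p_i(n)\propto p_i(0)e^{-h\beta_iN_i(n)}$. The base case $n=0$ is trivial.

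I expect no genuine obstacle. The only point needing care is uniqueness over the closed simplex, including boundary points, and the KL identity above settles that. One interpretive remark is needed: the theorem is read with $p(n-1)$ denoting the sampler run with effective bias $h\beta$, so that the recursion closes. Equivalently, the sequence of minimizers, started at $p(0)$, coincides with the self-balancing sampler with parameters $h\beta_i$.
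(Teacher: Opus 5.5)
Your argument is correct and follows essentially the same route as the paper: tilt $p(n-1)$ exponentially by $-hg$, note that the constant component of $g$ cancels upon normalization, and recognize the downweight-and-renormalize step with $\alpha_i = e^{-h\beta_i}$. The paper gets the tilted form from a Lagrange-multiplier stationarity condition and leaves uniqueness and boundary points implicit. Your identity $F(q)=\operatorname{KL}(q\|q^\star)-\log Z$ instead proves existence and uniqueness on the closed simplex directly. Your explicit induction, together with your remark that $p(n-1)$ must be the sampler run with bias $h\beta$, also spells out what the paper's phrase ``precisely the update rule'' takes for granted.
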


Theorem~\ref{thm:stochastic_mirror_descent} shows that the elementary ``downweight the selected coordinate and renormalize'' operation is precisely an entropic mirror-descent step \cite{BECK2003167}. In the standard mirror-descent identity, $q_i(n) \propto p_i(n) e^{-h g_i}$, where $g$ is a stochastic gradient. In our case, the stochastic gradient is supported only on the sampled coordinate $X_n$. Hence the mirror-descent update simply downweights the probability of the outcome which was just sampled and then renormalizes.

From the usual optimization perspective, the constant step-size $h$ may seem strange, since it generally means that the stochastic mirror descent algorithm may not converge. In our case, this is expected, since the sequence $p(n)$ does not converge. Considering a step size decreasing in $n$, as in Theorem~\ref{thm:time-dep-conv-rate}, can lead to a version of the sampler in which $p(n)$ (and the associated stochastic mirror descent) does converge.

The role of entropy here is also slightly different from its usual interpretation in reinforcement learning. There, entropy is often used to encourage exploration while learning an unknown environment \cite{haarnoja2018softactorcriticoffpolicymaximum}. Here, the environment is known, and the entropy term instead prevents the sampling rule from becoming overly predictable. Thus the corresponding trade-off is between reducing the current imbalance and preserving randomness in the next sample.


\subsection{Expected cover time}

Beyond asymptotic convergence of the empirical distribution, it is also important in applications 
that the sampler does not neglect any index for too long. A natural way to quantify this is via the cover 
time, namely the number of steps required before every outcome has been sampled at least once.
 In audit and inspection settings, this measures how long a team can go entirely untested; 
 in sampling and representation problems, it measures how long a category can remain completely unrepresented in the realized sample. Proofs can be found in Appendix~\ref{appendix:2_5_expected_cover_time}.

For deterministic sampling, the number of steps to see all $d$ outcomes is $d$, and for IID sampling, the expected number of steps to see all the outcomes, known as the ``coupon collector problem,'' is $\Theta(d\log d)$. The self-balancing sampler has behavior which interpolates between these two extremes, as we now show.

\begin{theorem}\label{thm:expected_cover_time}
    Let $C \coloneqq \max_{1 \leq i \leq d} \min \{ n \geq 1 : X_n = i \}$ be the number of steps until the self-balancing sampler sees every outcome at least once. Then, starting from the initial condition $p_i(0) = 1/d$,
    $$\E[C] = \sum_{i=1}^d \frac{\log(1 + \beta_i \log d)}{\beta_i} + O(d).$$
\end{theorem}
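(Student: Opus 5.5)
The plan is to use a continuous-time (Rubin-type) embedding that makes the coordinates independent, and then to read off the discrete cover time as a total event count at a random time. Let each outcome $j$ carry its own pure-birth counting process $M_j(t)$. When $M_j = k$, it jumps at rate $e^{-\beta_j k}$, and the processes for different $j$ are independent. Because $p_j(0)=1/d$, the uniform prefactor cancels. So when the next event of the superposition occurs, the probability that it belongs to $j$ is $e^{-\beta_j M_j}/\sum_l e^{-\beta_l M_l}$, which is exactly the self-balancing transition law \eqref{eqn:self_balancing_sampler_transition_probs}. The jump chain of $(M_1,\dots,M_d)$ therefore has the law of the counts process $N(n)$.

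In this embedding the first firing time $T_j$ of coordinate $j$ is $\mathrm{Exp}(1)$, since the rate is $e^0=1$ before the first selection, and the $T_j$ are i.i.d. Set $\tau := \max_j T_j$. Then the discrete cover time is
\[ C = \sum_{j=1}^d M_j(\tau). \]
It follows that $\E[C] = \sum_j \E[M_j(\tau)]$.

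Next, compute $\E[M_j(t)]$ for fixed $t$ and show it equals $\log(1+\beta_j t)/\beta_j + O(1)$ uniformly in $\beta_j$ and $t$. The hitting time $S_k$ of level $k$ is a sum of independent $\mathrm{Exp}(e^{-\beta m})$ variables for $m<k$. Its mean is $(e^{\beta k}-1)/(e^{\beta}-1)$, and its variance is of order the square of its last term. The heuristic is the ODE $\dot M = e^{-\beta M}$, whose solution is $M(t)=\log(1+\beta t)/\beta$.

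For the rigorous version I would write $\E M(t) = \sum_{k\ge1}\P(S_k\le t)$ and split at $k_0 \approx \log(1+\beta t)/\beta$. For $k \le k_0 - K$, $\E S_k$ is a constant factor below $t$; for $k \ge k_0+K$ it is a constant factor above. A Chernoff bound on the sums of exponentials then makes the tails summable to $O(1)$, uniformly in $\beta$. The case $\beta$ near $0$ (Poisson-like) and the case $\beta$ large (where $M$ is $O(1)$ and the claimed main term is $\log(1+\beta t)/\beta = O(1)$) should both fall out of the same estimate, since the relevant ratios $e^{\beta}$ per level only help. I expect this uniform-in-$\beta$ estimate to be the main technical obstacle, because the claimed error is $O(d)$ with no dependence on the $\beta_i$.

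Then handle the random time $\tau$ and its dependence on $M_j$. The dependence enters only through $T_j$. Let $\tau_{-j} := \max_{l\ne j}T_l$, which is independent of $M_j$. Since $\tau=\max(\tau_{-j},T_j)$, we have
\[ M_j(\tau_{-j}) \le M_j(\tau) \le M_j(\tau_{-j}) + M_j(T_j)\mathbf 1\{T_j>\tau_{-j}\}. \]
The last term is $1$ in that event, so the correction is at most $1$ per coordinate and $O(d)$ overall. By independence, $\E M_j(\tau_{-j}) = \E[g_j(\tau_{-j})]$ with $g_j(t) := \log(1+\beta_j t)/\beta_j + O(1)$.

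Finally, $\tau_{-j}$ is the maximum of $d-1$ i.i.d. exponentials, so $\tau_{-j} = \log d + G$ with $\E|G| = O(1)$ (Gumbel-type fluctuations). The function $t\mapsto \log(1+\beta t)/\beta$ is concave with derivative $1/(1+\beta t)\le 1$, hence it is $1$-Lipschitz. Therefore
\[ \E\!\left[\frac{\log(1+\beta_j\tau_{-j})}{\beta_j}\right] = \frac{\log(1+\beta_j\log d)}{\beta_j} + O(\E|G|) = \frac{\log(1+\beta_j\log d)}{\beta_j}+O(1). \]
Summing over $j$ gives $\E[C] = \sum_{i=1}^d \frac{\log(1+\beta_i\log d)}{\beta_i} + O(d)$.
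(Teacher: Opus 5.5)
Your architecture matches the paper's proof. You use the same independent pure-birth embedding with level-$k$ rate $e^{-\beta_j k}$, the same decomposition $C=\sum_j M_j(\tau)$, and the same decoupling through $\tau_{-j}$; your sandwich is the paper's identity $C_i\overset{d}{=}\max\{1,\widetilde C_i(M_{-i})\}$. The finish via $1$-Lipschitzness and $\E|\tau_{-j}-\log d|=O(1)$ is also the same.

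\textbf{The gap.} It is in the step you flagged, namely $\E M(t)=\log(1+\beta t)/\beta+O(1)$ uniformly in $\beta$ and $t$. The Chernoff-splitting argument you describe cannot produce this.

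First, its premise fails for small $\beta$. From $e^{\beta k_0}=1+\beta t$ you get $\E S_{k_0-K}\approx t-K(1+\beta t)$, so for constant $K$ the ratio $\E S_{k_0\pm K}/t$ tends to $1$ when $t$ is large and $\beta$ is small.

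More fundamentally, $M(t)$ fluctuates around $k_0$ on the scale $\min\{\sqrt t,\beta^{-1/2}\}$. As $\beta\downarrow 0$ it becomes $\mathrm{Poisson}(t)$, and then $\P(S_{k_0-K}>t)\approx\frac12$ for $K\ll\sqrt t$. So $\P(S_k\le t)$ stays bounded away from $0$ and $1$ on a band of that width around $k_0$. Now $\E M(t)-k_0$ is a difference of two nonnegative tail sums, each of order $\min\{\sqrt t,\beta^{-1/2}\}$. Bounding them separately only gives $|\E M(t)-k_0|\lesssim\min\{\sqrt t,\beta^{-1/2}\}$; the true $O(1)$ comes from a cancellation that one-sided tail bounds cannot see.

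At $t\approx\log d$ this gives a total error of $O(d\sqrt{\log d})$ as $\beta\downarrow0$, not $O(d)$. For fixed $\beta$ it gives only an $O_\beta(1)$ per-coordinate error whose constant blows up as $\beta\downarrow 0$. That loses exactly the uniformity the theorem needs, including for the comparison with coupon collecting.

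\textbf{How the paper closes this step.} It uses two exact moment identities and Jensen, with no fluctuation analysis. Applying the generator of the birth process to $f(x)=x$ and to $f(x)=e^{\beta x}$ gives
\[
\frac{d}{dt}\E M(t)=\E e^{-\beta M(t)}\ge e^{-\beta\E M(t)},\qquad
\frac{d}{dt}\E e^{\beta M(t)}=\E\bigl[e^{-\beta M(t)}\bigl(e^{\beta (M(t)+1)}-e^{\beta M(t)}\bigr)\bigr]=e^{\beta}-1 .
\]
The first identity yields $\frac{d}{dt}e^{\beta\E M(t)}\ge\beta$, hence $\E M(t)\ge\log(1+\beta t)/\beta$. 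The second yields $\E e^{\beta M(t)}=1+(e^\beta-1)t$, so by Jensen and $e^\beta-1\le\beta e^\beta$,
\[
\E M(t)\le\frac{\log(1+(e^\beta-1)t)}{\beta}\le\frac{\log(1+\beta t)}{\beta}+1.
\]
This sandwich holds for all $\beta>0$ and $t\ge 0$ with absolute constants. If you substitute it for your Chernoff step, the rest of your argument goes through as written.
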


For fixed $\beta_i$, the cover time is $\Theta(d \log \log d)$. Moreover, in the extremal cases $\beta \uparrow \infty$ and $\beta \downarrow 0$, the self-balancing sampler's behavior recovers the deterministic and IID cover-time rates, respectively:
$$\E[C] =\Theta(d) \text{ as $\beta \uparrow \infty$}, \qquad \E[C] = \Theta(d \log d) \text{ as $\beta \downarrow 0$}.$$


\section{Diffusion Approximation}
\label{sec:diffusion_approximations}

In this section, we explore a few different diffusion approximations of our self-balancing sampler, highlighting its mean-reversion and other asymptotic properties. We are primarily interested in the \textit{weak-biasing regime} where $\beta \downarrow 0$. Since for fixed $\beta$ the centered counts process is essentially bounded, this is the natural regime under which a diffusive limit can arise.

 Our first result shows that in this weak-biasing regime each coordinate is in a sense just a time-changed Ornstein–Uhlenbeck (OU) process. Proofs can be found in Appendix~\ref{appendix:3_1_diffusion_limit_proofs}.

\begin{proposition}\label{prop:projection_centered_diffusion_limit}
    
Fix $i \in [d-1]$ and let $\tilde{Y}_i^\beta(n) := \beta_iN_i(n) - \beta_dN_d(n)$ be the projection-centered version of the self-balancing sampler with the given $\beta$. Let:
\[\tau^\beta_k := \inf \Big\{n > \tau^\beta_{k-1} \; : \; \tilde{Y}_i^\beta(n) \neq \tilde{Y}_i^\beta(\tau^\beta_{k-1}) \Big\}, \quad \tau_0^\beta = 0.\]
For $h > 0$, define the continuous-time chain:
\[Z^h_t := h^{-1/2} \left(\tilde{Y}_i^{h\beta}(\tau^{\beta h}_{\lfloor t/h\rfloor}) - c_\beta\right), \quad c_\beta := \log\left(\frac{p_i(0)}{p_d(0)}\right) - \log\left(\frac{p^*_i}{p^*_d}\right).\]
\noindent If $Z_0^h \to z_0$, then we have $Z^h_t \Rightarrow Z_t$ as $h \downarrow 0$ where $Z_t$ is the diffusion
\[dZ_t = -\frac{\beta_i \beta_d}{\beta_i + \beta_d} Z_t dt + \sqrt{\beta_i\beta_d} \cdot dW_t, \quad Z_0 = z_0.\]
\noindent Here $W_t$ denotes a standard 1-dimensional Brownian motion and $\Rightarrow$ denotes convergence in the Skorokhod space $D([0,T], \R)$ for any $T < \infty$. 
\end{proposition}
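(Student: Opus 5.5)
The key observation is that the jump chain of a single projection-centered coordinate is a one-dimensional Markov chain, so the statement reduces to a standard Markov-chain-to-diffusion limit theorem. Write $y = \tilde{Y}_i^{h\beta}(n)$. Then $\tilde{Y}_i$ changes only when $X_{n+1} = i$, when it increases by $h\beta_i$, or when $X_{n+1} = d$, when it decreases by $h\beta_d$. Since $\tilde{Y}_d \equiv 0$, equation \eqref{eqn:self_balancing_sampler_transition_probs} (written in projection-centered form) gives
\[\frac{p_i(n)}{p_d(n)} = \frac{p_i(0)}{p_d(0)}\, e^{-y}.\]
Hence, conditionally on the next change occurring, it is an up-jump with probability
\[q(y) := \frac{1}{1 + \frac{p_d(0)}{p_i(0)} e^{y}},\]
which depends only on $y$. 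By the strong Markov property of the count chain at the stopping times $\tau^{\beta h}_k$, the sequence $V^h_k := \tilde{Y}_i^{h\beta}(\tau^{\beta h}_k)$ is a time-homogeneous Markov chain on a lattice in $\R$. Its steps are $+h\beta_i$ with probability $q(V^h_k)$ and $-h\beta_d$ otherwise. The other coordinates are irrelevant, and Lemma~\ref{lem:projection_centered_has_stationarity} (positive recurrence) guarantees that each $\tau^{\beta h}_k$ is almost surely finite.

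Next I compute the local characteristics of $Z^h_t = h^{-1/2}(V^h_{\lfloor t/h\rfloor} - c_\beta)$. Because $p^*_i/p^*_d = \beta_d/\beta_i$, we get
\[q(c_\beta) = \frac{\beta_d}{\beta_i + \beta_d}, \qquad q'(y) = -q(y)\bigl(1 - q(y)\bigr).\]
So the one-step mean increment of $V$ is
\[h\bigl[(\beta_i + \beta_d)\, q(y) - \beta_d\bigr] = -h\,\frac{\beta_i\beta_d}{\beta_i + \beta_d}\,(y - c_\beta) + h\,O\bigl((y - c_\beta)^2\bigr).\]
Substituting $y = c_\beta + h^{1/2} z$, the mean increment of $Z^h$ per time step $h$ is
\[b_h(z) = -h\,\frac{\beta_i\beta_d}{\beta_i + \beta_d}\, z + O(h^{3/2} z^2).\]
The second moment of the $Z^h$ increment is
\[a_h(z) = h\,(\beta_i + \beta_d)^2\, q(1-q) + O(h^2) = h\,\beta_i\beta_d + O(h^{3/2}|z|).\]
Jumps of $Z^h$ are bounded by $h^{1/2}\max(\beta_i, \beta_d) \to 0$. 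Dividing $b_h$ and $a_h$ by the time step $h$, the rescaled drift and diffusion coefficients converge, uniformly for $z$ in compact sets, to $-\frac{\beta_i\beta_d}{\beta_i + \beta_d} z$ and $\beta_i\beta_d$ respectively.

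I would then invoke the standard convergence theorem for Markov chains to diffusions (Ethier–Kurtz, Theorem 7.4.1, or Stroock–Varadhan, Theorem 11.2.3). Its conditions are: drift and covariance converge uniformly on compacts, the jump-size condition holds, $Z^h_0 \to z_0$, and the limiting martingale problem is well posed. The last condition holds because the OU generator has Lipschitz coefficients with linear growth, so there is no explosion. This yields $Z^h \Rightarrow Z$ in $D([0,T], \R)$.

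The main obstacle I expect is that the Taylor remainders are only uniform on compacts, while the theorem needs control up to exit times. I would handle this by localization, using Ethier–Kurtz's version with stopping times $\inf\{t : |Z^h_t| \ge R\}$. I would also note that $q \in (0,1)$ is globally Lipschitz, so the true rescaled drift has linear growth uniformly in $h$. That gives a Gronwall-type bound on $\E \sup_{t \le T} |Z^h_t|^2$, which supplies the tightness needed to remove the localization as $R \to \infty$.
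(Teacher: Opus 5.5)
Your proposal is correct and follows essentially the same route as the paper. Both identify the jump chain of $\tilde{Y}_i^{h\beta}$ as a birth--death chain whose up-probability depends only on its current value, expand its drift and variance about $c_\beta$ to obtain $-\frac{\beta_i\beta_d}{\beta_i+\beta_d}z$ and $\beta_i\beta_d$, use the $O(h^{1/2})$ jump bound, and invoke a standard Markov-chain-to-diffusion theorem (the paper cites Section 8.7 of Durrett).

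Two small remarks. First, Lemma~\ref{lem:projection_centered_has_stationarity} only covers rational ratios $\beta_i/\beta_j$. Finiteness of each $\tau_k$ nonetheless holds almost surely for every $\beta$, because sampling outside $\{i,d\}$ only increases $p_i+p_d$. Second, your localization and Gronwall step is harmless but unnecessary: the Ethier--Kurtz theorem you cite already requires only convergence of the characteristics on compacts plus well-posedness of the limiting martingale problem.
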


In the uniform case $\beta_i \equiv \beta$, the diffusion simplifies to
\[dZ_t = -\frac{\beta}{2} Z_t dt + \beta dW_t,\]
\noindent a simple OU process. The OU process is well-known for its strong mean-reversion properties, so in a sense it is unsurprising it appears as a limit of our discrete self-balancing sampler.

The correction $c_\beta$ shifts the starting point of $\tilde{Y}(n)$ to the location
where it has zero drift and where the sampling law $p(n)$ agrees with the target law $p^*$.
Except in the case where $p(0) = p^*$ and thus $c_\beta = 0$, this zero-drift location is \textit{not} the natural
initial condition where all counts are zero. Consequently, Proposition~\ref{prop:projection_centered_diffusion_limit}
should not be interpreted as a diffusion limit from the all-zero count
initialization. Rather, it describes the local fluctuations of the sampler near
the zero-drift point, or heuristically the behavior after the process has already
entered its equilibrium scale. In a sense, $c_\beta$ represents a penalty we pay for starting our process
away from stationarity. As we saw in Theorem~\ref{thm:average_case_conv_rate}, this penalty has a strong
impact on leading-order constants of the convergence rate.

We develop an analogous diffusion-approximation for the mean-centered counts, this time without needing a time-change. Here, the interaction between the different coordinates becomes visible, and again the process exhibits mean-reversion.


\begin{proposition}
Let $M^\beta(n) := (\beta_i[N_i(n) - np_i^*])_{i=1}^d$ be the (weighted) mean-centered version of the self-balancing sampler with the given $\beta$. For $h > 0$, define the continuous-time chain:
\[Z^h_t := h^{-1/2}\left(M^{h\beta}(\lfloor t/h\rfloor) - c_\beta\right), \quad (c_\beta)_i := \log\left(\frac{p_i(0)}{p_i^*}\right) + \mathrm{KL}(p^* \mid p(0)).\]
\noindent If $Z_0^h \to z_0$, then $Z^h_t \Rightarrow Z_t$ as $h \downarrow 0$ where $Z_t$ is the diffusion:
\[dZ_t = -\frac{1}{S_\beta}Z_t dt + \Sigma^{1/2} d\mathbf{W}_t, \quad Z_0 = z_0,\]
\noindent for $S_\beta = \sum_j \beta_j^{-1}$  and $\Sigma = S_\beta^{-1}\mathrm{diag}(\beta)  - S_\beta^{-2} \mathbf{1}\mathbf{1}^T$. Here $\mathbf{W}_t$ denotes a d-dimensional standard Brownian motion.

\label{prop:mean_centered_diffusion_limit}
\end{proposition}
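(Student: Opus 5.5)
The plan is to verify the hypotheses of a standard Markov-chain-to-diffusion limit theorem, such as Ethier–Kurtz, Thm.~7.4.1 (generator or martingale-problem convergence with locally uniform convergence of drift and covariance). The first step is to rewrite the sampler as a function of $M^{h\beta}$ alone. Here $M^{h\beta}_i(n)=h\beta_i(N_i(n)-np_i^*)$. Since $\beta_i p_i^*=S_\beta^{-1}$ does not depend on $i$, the factor $e^{-h\beta_i n p_i^*}$ is common to all coordinates. It therefore cancels in the softmax \eqref{eqn:self_balancing_sampler_transition_probs}, giving
\[
p_i(n)=\frac{p_i(0)e^{-M^{h\beta}_i(n)}}{\sum_j p_j(0)e^{-M^{h\beta}_j(n)}} .
\]
Hence $M^{h\beta}$, and so $Z^h$, is a time-homogeneous Markov chain. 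Its increments are
\[
Z^h_i\big((n+1)h\big)-Z^h_i(nh)=h^{1/2}\beta_i\bigl(\mathbf 1\{X_{n+1}=i\}-p_i^*\bigr),
\]
which are deterministically bounded by $O(h^{1/2})$. This removes any jump or Lindeberg issue. I also record the linear constraint $\sum_i M_i/\beta_i=h\bigl(\sum_i N_i-n\bigr)=0$. It will confine the chain to the hyperplane $\{z:\langle p^*,z\rangle=0\}$, so I will assume $z_0$ lies in it, which the hypothesis $Z_0^h\to z_0$ forces.

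Next I check the role of the centering $c_\beta$. With $M=c_\beta+h^{1/2}z$,
\[
p_i(0)e^{-(c_\beta)_i}=p_i^*e^{-\mathrm{KL}(p^*\mid p(0))}\propto p_i^*,
\]
so the sampling law at $z=0$ is exactly $p^*$. Moreover, $\sum_i p_i^*(c_\beta)_i=-\mathrm{KL}+\mathrm{KL}=0$, so $c_\beta$ itself lies in the constraint hyperplane and $Z^h$ stays in it. Expanding the softmax to first order gives
\[
p_i=p_i^*\bigl(1-h^{1/2}(z_i-\langle p^*,z\rangle)\bigr)+O(h|z|^2),
\]
uniformly for $z$ in compacts.

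The conditional moments then follow. The one-step drift is
\[
h^{1/2}\beta_i(p_i-p_i^*)=-h\,\beta_ip_i^*\,(z_i-\langle p^*,z\rangle)+O(h^{3/2}|z|^2)=-h S_\beta^{-1}z_i+O(h^{3/2}|z|^2)
\]
on the hyperplane. The one-step conditional covariance is
\[
h\beta_i\beta_j(p_i\delta_{ij}-p_ip_j)=h\bigl(S_\beta^{-1}\beta_i\delta_{ij}-S_\beta^{-2}\bigr)+O(h^{3/2}|z|),
\]
using $\beta_ip_i^*=S_\beta^{-1}$. After dividing by $h$, both converge uniformly on compacts to $b(z)=-S_\beta^{-1}z$ and $a=\Sigma$. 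The fourth moments are $O(h^2)$.

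The last step, which I expect to be the only real obstacle, is passing from local convergence of the coefficients to convergence in $D([0,T],\R^d)$. The error terms are only locally uniform, so tightness is not automatic. I would handle this by localization: stop the chain at its exit from a ball of radius $R$, apply the theorem to the stopped processes, and use well-posedness of the martingale problem for the OU generator $\tfrac12\operatorname{tr}(\Sigma\nabla^2 f)-S_\beta^{-1}\langle z,\nabla f\rangle$ (linear drift, constant diffusion). Non-explosion of the OU limit then lets $R\to\infty$; this is exactly the localized form of Ethier–Kurtz, Thm.~7.4.1. A direct alternative is to check tightness via Aldous' criterion, using the $O(h^{1/2})$ increments and a Gronwall bound on $\E|Z^h_t|^2$ from the linear drift, and then identify the limit through the martingale problem.
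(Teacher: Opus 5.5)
Your proposal is correct and follows essentially the same route as the paper. Both arguments write the transition law as a softmax in $M^{h\beta}$ (using that $\beta_i p_i^*$ is constant), check that $c_\beta$ makes the law at $z=0$ equal to $p^*$ while lying in $\{z:\langle p^*,z\rangle=0\}$, expand to get drift $-hS_\beta^{-1}z$ and covariance $h\Sigma$ locally uniformly with $O(h^{1/2})$ increments, and invoke a standard Markov-chain diffusion-approximation theorem (the paper cites Durrett, \S 8.7, where you cite Ethier--Kurtz); your explicit localization and handling of the hyperplane constraint are, if anything, slightly more careful than the paper's.
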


\noindent As a consequence of the centering, this diffusion is constrained to the hyperplane
$$\{z : \langle z, p^* \rangle = 0\}.$$

\noindent Again, the situation simplifies considerably in the uniform $\beta_i \equiv \beta$ case. 
\[dZ_t = -\frac{\beta}{d} Z_t dt + \frac{\beta}{d}(d I_d  - \mathbf{1} \mathbf{1}^T)^{1/2} dW_t.\]
\noindent The centering $c_\beta$ in Proposition~\ref{prop:mean_centered_diffusion_limit} satisfies
\begin{equation}
    \langle c_\beta, p(0) \rangle = \mathrm{KL}(p(0) \mid p^*) + \mathrm{KL}(p^* \mid p(0)),
    \label{eqn:c_beta_and_sym_KL_divergence}
\end{equation}

\noindent the symmetrized KL divergence. Hence again $c_\beta$ is a measure of the typical discrepancy between the initial sampling law $p(0)$ and the stationary sampling law $p^*$. 

\subsection{Interpreting Diffusive Limits}

Using our diffusion limits, we can study how our two centered-counts processes behave in this weak-biasing regime. This section is non-rigorous and mainly serves to provide some evidence the upper bounds appearing in Theorem~\ref{thm:average_case_conv_rate} are in a sense tight. Again, we emphasize the diffusion limits model the fluctuations of the process about the zero drift location, which is not the natural starting condition where all counts are zero except when $c_\beta = 0$. Moreover, we will study these limiting diffusions at their stationarity, but the discrete centered chains may only have a stationary distribution in the rational-ratio setting of Lemma~\ref{lem:projection_centered_has_stationarity}.

We start with the projection-centered case. Moreover, by studying the process's autocorrelation we aim to provide an alternative measure of its predictability in contrast to the adversarial notion discussed in Section~\ref{sec:adversarial_interpretation}. 

Below, we will rely on some basic facts about OU processes, details of which can be found in Section 6.5 of \cite{risken1996fokker}. Recall the one-dimensional OU process
\[dZ_t = -\theta Z_t dt + \sigma dW_t, \quad Z_0 = z_0,\]
\noindent has stationary distribution $N(0, \sigma^2/2\theta)$ and autocorrelation
\[\mathrm{Corr}(Z_t, Z_{t+s}) = e^{-\theta s}.\]
\noindent For small $\beta$, where sampling is close to uniform, the waiting times $\tau^\beta_k - \tau_{k-1}^\beta \approx \mathrm{Geo}(2/d)$, so we wait roughly time $d/2$ between samples. Hence using Proposition \ref{prop:projection_centered_diffusion_limit}, we heuristically see that in the uniform $\beta$ case the (time-changed) projection-centered counts are well-approximated by: 
    \[N_i(n) - N_d(n) \approx \frac{Z_{2nh/d}}{\beta\sqrt{h}}.\]
\noindent which is an OU process with stationary distribution $N(0, (\beta h)^{-1})$. At stationarity:
\[\mathrm{Corr}(N_i(n+k) -  N_d(n+k), N_i(n) -  N_d(n)) \approx \exp\left(-\frac{\beta hk}{d}\right). \]
\noindent Since $\beta h$ was just our rescaled bias, this essentially says in the small $\beta$ regime at stationarity the process's auto-correlations decay exponentially at rate $O(\beta)$. Since $p^*(\beta) = p^*(c\beta)$ for any $c > 0$, scaling $\beta$ has no effect on the limiting distribution. 

For the mean-centered counts, the story is similar. Specifically, for arbitrary $d\times d$ matrices $\Theta, \Sigma$ the $d-$dimensional OU process
\[dZ_t = -\Theta Z_t dt + \Sigma dW_t,\]
\noindent has stationary distribution $N(0, \omega)$ for $\omega$ solving the Lyapunov equation
\[\Theta \omega + \omega \Theta^T = \Sigma \Sigma^T.\]
\noindent Moreover, stationary covariance is
\[\mathrm{Cov}(Z_{t+s}, Z_t) = e^{-\Theta s} \omega\]
\noindent In the context of Proposition \ref{prop:mean_centered_diffusion_limit}, it is easy to see $\Theta = S_\beta^{-1} I_d$ and
\[\omega = \frac{1}{2} [D_\beta - S_\beta^{-1} 11^T] = \frac{1}{2}D_\beta [I_d - p^*1^T],\]
\noindent for $D_\beta = \mathrm{diag}(\beta_i)$. Thus, Proposition \ref{prop:mean_centered_diffusion_limit} implies
\[N(n) - np^* \approx D_\beta^{-1} \left[\frac{c_\beta}{h} + \frac{Z_{nh}}{\sqrt{h}} \right] \sim N\left( \frac{D_\beta^{-1}c_\beta}{h},\;  \frac{D_\beta^{-1}\omega D_\beta^{-1}}{h}\right).\]
\noindent At stationarity, for any nonzero $a \in \R^d$ we have autocorrelation
\[\mathrm{Corr}(a^T(N(n+k) - (n+k)p^*), \; a^T(N(n) - np^*)) = e^{-k h / S_\beta},\]
Hence the auto-correlation decay rate is $O(\beta)$.

Note
\[\E ||\mu_n - p^*||_{TV} = \frac{1}{2n} \sum_i |N_i(n) - np_i^*| = \frac{\E ||Y(n)||}{2n}.  \]

\noindent Using the above stationary distribution, our diffusion approximation suggests
\[\E ||\mu_n - p^*||_{TV} \approx \frac{1}{2n}\left(\sum_{i=1}^d \E \left|N\left(\frac{(c_\beta)_i}{\beta_i h}, \;  \frac{1-p_i^*}{2\beta_ih}\right)\right| \right).\]
\noindent Hence, the mean of a folded-normal
\[\E |N(\mu, \sigma^2)| =
\sigma \sqrt{\frac{2}{\pi}}
\exp\!\left(-\frac{\mu^2}{2\sigma^2}\right)
+
\mu\left[1-
2\Phi\!\left(-\frac{\mu}{\sigma}\right)
\right],
\]
\noindent suggests there are two regimes:
\[\E ||\mu_n - p^*||_{TV} \approx \begin{cases}
    O\left(\frac{d}{n\sqrt{\beta h}}\right) & c_\beta = 0,\\[5pt]
    O\left(\frac{1}{nh} \sum \frac{|(c_\beta)_i|}{\beta_i}\right)& c_\beta \neq 0.\\
\end{cases}\]
\noindent Since $c_\beta = 0$ if and only if $p(0) = p^*$, we recover the same leading order constants appearing in the upper bound Theorem~\ref{thm:average_case_conv_rate}. This suggests these constants are indeed tight, and we supplement this with further empirical evidence in Appendix \ref{appendix:empirical_results}.

\subsection{Adversarial Interpretation of Predictability}
\label{sec:adversarial_interpretation}

In order to develop a notion of how
``predictable'' such a sampler is, consider the case where an adversary attempts to guess the next sample given the current sample history. Then one way of quantifying the predictability of the sampler is to study how accurate such an adversary can be if they behave optimally. In this section, we formalize this idea. Proofs can be found in Appendix~\ref{appendix:3_3_adversarial_error_rate_proof}.

Predictability criteria of this form have been studied extensively for restricted randomization designs, where selection bias is commonly measured by the probability that an observer can correctly guess the next treatment assignment \cite{05287b01-9a36-3190-b702-371959e17200,7708aeae-3258-37ff-b421-c7c45f9f9dd0,https://doi.org/10.1111/1467-985X.00564}. When $d=2$, $\beta$ is uniform, and $p(0) = p^*$ we show that the prediction accuracy is order $1/2  + \Theta(\sqrt{\beta})$ in the weak-biasing $\beta \downarrow 0$ regime. We conjecture this is true more generally, and using our diffusive limits, we provide heuristic evidence this accuracy is order $\max_i p_i^* + O(\sqrt{\beta})$ when $p^*$ does not have a unique maximizer in the same weak-biasing regime. 


Clearly, since the sampler exponentially favors under-sampled indices, the best strategy for the adversary is to just guess
the most under-sampled index. Under this strategy, we define the adversary's asymptotic error-rate as
\[E_\beta := \limsup_{n \to \infty} \frac{1}{n}\sum_{k=0}^{n-1}\P\!\left(X_{k+1} \neq \argmax_{i\in [d]} p_i(N(k))\right) = 1-\liminf_{n \to \infty} \frac{1}{n}\sum_{k=0}^{n-1} \E_{y \sim \tilde{Y}(k)} \left[\max_{i\in[d]} p_i(y)\right]\]

\noindent where $\tilde{Y}(k)$ denote our projection-centered counts. Lemma \ref{lem:projection_centered_has_stationarity} shows, at least in the rational-ratio setting, these centered counts form an ergodic Markov chain, so it has a stationary distribution $\eta$. Moreover, the ergodic theorem implies the finite-sample error-rate ought to converge to the expected error-rate at stationarity:
    \[E_\beta = 1 - \E_{y \sim \eta} \left[\max_{i\in[d]} p_i(y)\right].\]
In general, it is hard to determine what the stationary distribution of the projection-centered counts $\tilde{Y}$ is. However, in the special case of $d=2$, then $\tilde{Y}$ becomes a simple birth-death process on $\Z$ which makes this a lot more tractable. 

\begin{lemma}
    When $d=2$ and uniform $\beta_1 = \beta_2 = \beta > 0$, the projection-centered counts $\tilde{Y}(n)$ form a reversible Markov chain on $\beta\Z$ with stationary distribution:
    \[\eta(\beta z) \propto  \frac{p_2(0) + p_1(0)e^{-\beta z}}{p_1(0) + p_2(0)} \left(\frac{p_1(0)}{p_2(0)}\right)^z e^{-\beta {z \choose 2}}\]
    \label{lem:2d_stationary_project_centered}
\end{lemma}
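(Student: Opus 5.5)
The plan is to write the $d=2$ chain down explicitly as a nearest-neighbour (birth--death) walk on $\beta\Z$, solve the detailed balance equations in closed form, and then check that the result is summable. Since the chain is irreducible, a summable solution of detailed balance is the unique stationary law. This also agrees with Lemma~\ref{lem:projection_centered_has_stationarity}, which already gives positive recurrence in this setting with $v=(1,1)$.

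\emph{Step 1: the transition kernel.} With $\beta_1=\beta_2=\beta$ we have $\tilde Y(n)=\beta(N_1(n)-N_2(n))$. Each step changes $N_1-N_2$ by exactly $\pm1$, and the chain starts at $0$, so it lives on $\beta\Z$. Write $\tilde Y(n)=\beta z$ and divide the numerator and denominator of \eqref{eqn:self_balancing_sampler_transition_probs} by $e^{-\beta N_2(n)}$. This gives
\[
u(z):=\P(\text{up}\mid z)=\frac{p_1(0)e^{-\beta z}}{D(z)},\qquad 1-u(z)=\frac{p_2(0)}{D(z)},\qquad D(z):=p_1(0)e^{-\beta z}+p_2(0).
\]
These probabilities depend only on $z$, which recovers Proposition~\ref{prop:projection_centered_counts_are_markovian} in this case. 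Both $u(z)$ and $1-u(z)$ are strictly positive for every $z$, so the chain is an irreducible birth--death chain on all of $\beta\Z$.

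\emph{Step 2: detailed balance.} For a birth--death chain, the equations $\eta(z)u(z)=\eta(z+1)(1-u(z+1))$ determine $\eta$ up to a constant. They give
\[
\frac{\eta(z+1)}{\eta(z)}=\frac{u(z)}{1-u(z+1)}=\frac{p_1(0)}{p_2(0)}\,e^{-\beta z}\,\frac{D(z+1)}{D(z)} .
\]
I will take as the candidate $\eta(z)\propto D(z)\,(p_1(0)/p_2(0))^z e^{-\beta\binom z2}$, where $\binom z2=z(z-1)/2$ for all integers $z$, including negative ones. To verify it, use $\binom{z+1}{2}-\binom z2=z$: the ratio of consecutive candidate values then matches the display above for every $z\in\Z$, positive and negative alike. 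Finally, $D(z)/(p_1(0)+p_2(0))$ differs from $D(z)$ only by a constant factor, so the candidate coincides with the normalization written in the statement.

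\emph{Step 3: normalizability and conclusion.} This is the only step needing any care, and it is routine. The factor $e^{-\beta z(z-1)/2}$ decays like a Gaussian in $|z|$. It dominates both $(p_1(0)/p_2(0))^z$ and $D(z)$, which grows at most like $e^{\beta|z|}$ as $z\to-\infty$. Hence $\sum_z\eta(z)<\infty$ and $\eta$ normalizes to a probability measure. A probability measure satisfying detailed balance is stationary, so the chain is reversible with respect to $\eta$. Irreducibility, together with the positive recurrence from Lemma~\ref{lem:projection_centered_has_stationarity}, makes $\eta$ the unique stationary distribution.
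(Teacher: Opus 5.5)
Your proposal is correct and follows essentially the same route as the paper: you identify $\tilde Y$ as a birth--death chain with up-probability $p_1(0)e^{-\beta z}/(p_1(0)e^{-\beta z}+p_2(0))$, solve detailed balance in closed form, and use the Gaussian factor $e^{-\beta\binom z2}$ for normalizability. You verify the closed form through the consecutive-ratio identity $\binom{z+1}{2}-\binom z2=z$ rather than the paper's telescoping product over $j=1,\dots,n$, which has the minor advantage of covering negative $z$ explicitly, and your uniqueness remark spells out a step the paper leaves implicit.
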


\noindent In the case $p_1(0) = p_2(0) = \frac{1}{2}$, we get the symmetric and especially simple form:
\[\eta(\beta z) \propto \cosh\left(\frac{\beta z}{2}\right)  e^{-\frac{\beta z^2}{2}}.\]

\noindent In the $d=2$ case, we can quantify how this error-rate depends on $\beta$ in the weak-biasing regime.

\begin{lemma}\label{lem:small_beta_adversarial}
    When $d=2$ and uniform $\beta$, as $\beta \downarrow 0$ we have:
    \[E_\beta = \frac{1}{2} - \Theta(\sqrt{\beta}). \]
\end{lemma}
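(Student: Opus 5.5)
We will prove the claim under the standing assumption $p(0)=p^*=(\tfrac12,\tfrac12)$ (the case $c_\beta=0$). The plan is to express $E_\beta$ as an explicit expectation under the stationary law of Lemma~\ref{lem:2d_stationary_project_centered}, and then estimate that expectation by a Gaussian-scale Riemann-sum argument.

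\emph{Step 1: reduce $E_\beta$ to a stationary expectation.} With $v=(1,1)$ we have $\gcd(v)=1$, so Lemma~\ref{lem:projection_centered_has_stationarity} applies: the chain $\tilde Y(n)=\beta(N_1(n)-N_2(n))\in\beta\Z$ is irreducible and positive recurrent. The ergodic theorem for positive recurrent chains then gives Cesàro convergence of $\E[f(\tilde Y(k))]$ to $\E_\eta f$ for the bounded function
\[
f(y)=\max_i p_i(y).
\]
Hence
\[
E_\beta = 1-\E_{y\sim\eta}\Bigl[\max_i p_i(y)\Bigr].
\]
With $p(0)$ uniform we have $p_1(y)=1/(1+e^{y})$, so
\[
\max_i p_i(y)=\frac{1}{1+e^{-|y|}},
\qquad
1-\max_i p_i(y)=\tfrac12-\tfrac12\tanh\bigl(|y|/2\bigr).
\]
It therefore suffices to show
\[
T_\beta:=\E_\eta\bigl[\tanh(\beta|Z|/2)\bigr]=\Theta(\sqrt\beta),
\]
where, by Lemma~\ref{lem:2d_stationary_project_centered}, $Z$ has law $\eta(z)\propto w_\beta(z):=\cosh(\beta z/2)\,e^{-\beta z^2/2}$ on $\Z$.

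\emph{Step 2: sandwich $\tanh$.} We use $\tanh x\le x$ for all $x\ge 0$, and $\tanh x\ge x/2$ for $0\le x\le 1$. These give
\[
c\,\beta\,\E\bigl[|Z|\,\mathbf 1\{|Z|\le 2/\beta\}\bigr]\;\le\; T_\beta\;\le\; \frac{\beta}{2}\,\E|Z|.
\]
So the claim reduces to showing that $\E|Z|\lesssim\beta^{-1/2}$, and that $\E\bigl[|Z|\,\mathbf 1\{|Z|\le\beta^{-1/2}\}\bigr]\gtrsim\beta^{-1/2}$. Since $\beta^{-1/2}\le 2/\beta$ for small $\beta$, this truncation is admissible in the lower bound.

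\emph{Step 3: moment estimates (main obstacle).} This step is the only real work: we must control the discrete sums uniformly in $\beta$.

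\textbf{Normalizer.} Since $\cosh\ge 1$,
\[
\sum_z w_\beta(z)\;\ge\;\sum_z e^{-\beta z^2/2}\;\gtrsim\;\beta^{-1/2},
\]
by comparison with $\int e^{-\beta x^2/2}\,dx=\sqrt{2\pi/\beta}$.

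\textbf{Upper bound.} Use $\cosh(\beta z/2)\le e^{\beta|z|/2}$ and complete the square:
\[
e^{\beta|z|/2-\beta z^2/2}\le e^{\beta/8}\,e^{-\beta(|z|-1/2)^2/2}.
\]
Then
\[
\sum_z |z|\,w_\beta(z)\;\lesssim\;\sum_z |z|\,e^{-\beta(|z|-1/2)^2/2}\;\lesssim\;\beta^{-1}.
\]
This follows from a Riemann-sum comparison: substitute $u=\sqrt\beta\,z$, with mesh $\sqrt\beta$, and handle the monotone tails in the usual way. Dividing by the normalizer gives $\E|Z|\lesssim\beta^{-1/2}$.

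\textbf{Lower bound.} On $\{|z|\le\beta^{-1/2}\}$ we have $\beta|z|/2\le\sqrt\beta/2\le 1$, so $\cosh(\beta z/2)\le\cosh 1$ and $e^{-\beta z^2/2}\ge e^{-1/2}$. Therefore
\[
\sum_{|z|\le\beta^{-1/2}}|z|\,w_\beta(z)\;\gtrsim\;\sum_{|z|\le\beta^{-1/2}}|z|\;\gtrsim\;\beta^{-1}.
\]
For the normalizer, the same completed-square bound as above gives
\[
\sum_z w_\beta(z)\;\lesssim\;\sum_z e^{-\beta(|z|-1/2)^2/2}\;\lesssim\;\beta^{-1/2}.
\]
Dividing, $\E\bigl[|Z|\,\mathbf 1\{|Z|\le\beta^{-1/2}\}\bigr]\gtrsim\beta^{-1/2}$.

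\emph{Step 4: conclude.} Combining Steps 2 and 3 gives $T_\beta=\Theta(\sqrt\beta)$, and hence, by Step 1,
\[
E_\beta=\tfrac12-\tfrac12 T_\beta=\tfrac12-\Theta(\sqrt\beta).
\]
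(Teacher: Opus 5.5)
Your proof is correct and follows the same route as the paper, under the same symmetric initialization $p(0)=(\tfrac12,\tfrac12)$ that the paper's proof uses: reduce $E_\beta$ by the ergodic theorem to an expectation under the explicit stationary law $\eta(\beta z)\propto\cosh(\beta z/2)e^{-\beta z^2/2}$, sandwich $(1+e^{x})^{-1}=\tfrac12-\tfrac12\tanh(x/2)$ by its linear term, and control $|Z|$ on the scale $\beta^{-1/2}$. The differences are only technical. You use the truncated bound $\tanh x\ge x/2$ on $[0,1]$ and explicit completed-square comparison sums, which is fully rigorous but yields only $\Theta(\sqrt\beta)$. The paper instead uses $\tfrac12-\tfrac x4\le(1+e^x)^{-1}\le\tfrac12-\tfrac x4+\tfrac{x^3}{48}$ with Gaussian-mixture asymptotics for $\E|Y|$ and $\E|Y|^3$, which also identifies the leading constant $\tfrac14\sqrt{2/\pi}$ (one small slip: your window lower bound needs $\cosh\ge 1$, not $\cosh(\beta z/2)\le\cosh 1$).
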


\noindent More precise constants can be found in Appendix \ref{appendix:proofs}. Even when $d=2$, if $\beta$ is non-uniform we lose this simple birth-death structure. Instead, to analyze the general $\beta$ case we rely on our diffusive limits. Again, the following section is non-rigorous, but aims to add some heuristic evidence for why this $\Theta(\sqrt{\beta})$ rate appears. 

We saw in Section \ref{sec:diffusion_approximations} that in the weak-biasing regime the weighted mean-centered counts are approximated at stationarity by
\[M^\beta(n) = D_\beta [N(n) - np^*] \approx c_\beta + \sqrt{h} Z, \qquad Z \sim N(0,\omega),\]
\noindent where $\omega$ is the stationary covariance from Section \ref{sec:diffusion_approximations}. Since
\[p_i(M) = \frac{p_i(0)e^{-M_i}}{\sum_{j=1}^d p_j(0)e^{-M_j}}, \qquad p(c_\beta) = p^*,\]
\noindent this suggests
\[\E\left[\max_i p_i(n)\right] \approx \E\left[\max_i p_i(c_\beta + \sqrt{h} Z)\right].\]
\noindent Let $J_\beta$ denote the Jacobian of the softmax map at $c_\beta$. A direct calculation gives
\[J_\beta = -\left[\operatorname{diag}(p^*) - p^*(p^*)^T\right],\]
\noindent and hence the first-order Taylor expansion yields
\[p(c_\beta + \sqrt{h} Z) = p^* + \sqrt{h}J_\beta Z + O(h).\]
\noindent so heuristically
\[\E\left[\max_i p_i(n)\right] \approx \E\left[\max_i \left(p_i^* + \sqrt{h} G_i\right)\right], \quad G = J_\beta Z \sim N(0, J_\beta \omega J_\beta^T).\]

\noindent If the maximum of $p^*$ is not unique, then these first-order Gaussian fluctuation contribute. Writing $I^* = \{i : p_i^* = \max_j p_j^*\}$ for the set of maximizers, a first-order expansion gives
\[\E\left[\max_i p_i(n)\right] = \max_i p_i^* + \sqrt{h}\,\E\left[\max_{i \in I^*} G_i\right] + O(h).\]
\noindent Using the fact $\langle Z, p^* \rangle  = 0$, we see:
\[G = -\mathrm{diag}(p^*) Z \overset{d}{=} -\mathrm{diag}(p^*)(W - \langle W, p^* \rangle 1) \]

\noindent for $W \sim N(0, D_\beta / 2)$. Hence as the second term is constant on $I^*$ and mean-zero
\[\E\left[\max_i p_i(n)\right] - \max_i p_i^* = \sqrt{\frac{h}{2\beta_{min} S_\beta^2}} \cdot \E \max_{i \leq |I^*|} B_i + O(h)  \leq \sqrt{\frac{h \log | I^*|}{\beta_{min} S_\beta^2}} + O(h).\]
\noindent for $B_i \overset{iid}{\sim} N(0,1)$. The second approximation comes from classical asymptotics for the maximum of iid Gaussians, and is approximately tight when $|I^*|$ is large. Hence 
\[E_\beta \approx 1-\max_i p_i^* - \sqrt{\frac{h \log | I^*|}{\beta_{min} S_\beta^2}} + O(h).\]
\noindent In the uniform $\beta$ case, this becomes
\[E_\beta \approx 1 - \frac{1}{d} - \frac{\sqrt{\beta h  \log d}}{d} + O(h)\]
 Again, this is reasonable when $d$ is large. Thus, the diffusion limit suggests in the weak-biasing regime, the predictability
of the sampler is governed primarily by the maximum of $p^*$, and by the number of coordinates sharing this maximum value. Increasing either increases the predictability of the sampler. Moreover, for fixed $p^*$, it shows this
 predictability grows at rate $\Theta(\beta^{1/2})$ as $\beta \downarrow 0$, which matches the $d=2$ case from Lemma \ref{lem:small_beta_adversarial}. Somewhat interestingly, predictability can increase when several coordinates share the minimal value of \(\beta_i\), since the adversary can exploit the largest realized fluctuation among these tied coordinates. 
 

\subsection{Optimal Control}
\label{appendix:optimal_control}

In this section, we briefly review some basic optimal control theory, and discuss how Theorem \ref{thm:general_optimization_problem} fits into the entropy-regularized optimal control framework. Suppose we have some discrete time dynamical system on some state space $S$:
\[x_{t+1} = b(x_t, u_t), \quad x_t \in S, u_t \in U\]
\noindent where $u_t$ is the input of some controller and $U$ is the set of all allowable controls. In classical control
theory, we have some loss function $\ell(x,u)$ and we wish to minimize the cumulative (discounted) loss over the entire
trajectory $(x_t)_{t \in \N}$. Namely we aim to solve
\[V(x) := \inf_{(u_t)} \left[\sum_{s=1}^\infty \gamma^s \cdot \ell(x^u_s, u_s) \right],\]
\noindent for some discount $\gamma \in (0,1)$ and where $x_0 = x$ is the initial state of the system. Here $(x^u_t)$ is the trajectory of the process when control sequence
$(u_t)$ is applied. The \textit{value function} $V$ and optimal control $(u^*_t)$ can be determined
using the standard Bellman equation:
\[V(x) = \inf_{u \in U} \left\{\ell(x,u) + \gamma V( b(x,u))\right\}.\]
\noindent If we let $Q_\gamma(x,u) := \ell(x, u) + \gamma V(b(x,u))$ then we see the optimal policy $u^*$ and its
corresponding value function $V$ satisfy
\begin{equation} 
	u^*(x) = \arg\min_{u \in U} Q_\gamma(x, u), \quad V(x) = Q_\gamma(x, u^*(x)).
	\label{eqn:optimal_unregularized_policy}
\end{equation}

We can generalize the above by allowing the control policy to select its control $u_t$ at random, possibly depending on the current state. This paradigm is useful in the following settings:

\begin{enumerate}[label=(\roman*)]

    \item \textit{Adversarial or Strategic Environments}: In some settings, predictability can be exploited by an adversary or the environment. Entropy regularization encourages policies that remain sufficiently randomized, thereby mitigating worst-case or strategic responses to deterministic behavior \cite{husain2021regularized}.

    \item \textit{Exploration Versus Exploitation}: In reinforcement learning, we often want to encourage the agent to explore the state space in order to learn about the environment and the corresponding reward process. Randomized policies explicitly encode this exploration into the objective function.

    \item \textit{Robustness and Regularization}: Entropy regularization acts as a convex regularizer on the control problem, smoothing the optimization landscape and preventing degenerate (overly deterministic) solutions. This can significantly improve stability and sample efficiency in practice \cite{haarnoja2018softactorcriticoffpolicymaximum}.

    \item \textit{Information Constraints / Bounded Rationality}: Entropy regularization can also be interpreted as penalizing deviations from a reference policy via a Kullback--Leibler divergence. In this view, the controller faces a cost for using highly concentrated policies, reflecting limited information-processing capacity or a preference for staying close to a prior behavior (\cite{ortega2015informationtheoreticboundedrationality}). 
\end{enumerate}

 Let $U$ denote the set of available controls. Suppose there is some deterministic function $\pi(\cdot \mid x) : S \to \mathcal{P}(U)$ which maps the state-space $S$ into the set  $\cP(U)$ of probability distributions on the set of controls. Define dynamics
\[X_{t+1} \mid X_t  \sim b(X_t, U_t), \quad \P(U_t \in \cdot \mid \{X_i\}_{i \leq t}, \{U_i\}_{i < t} ) =  \pi(\cdot \mid X_t), \quad X_0 = x_0.\]
\noindent Namely, at time $t$ our state $X_t$ determines a distribution $\pi(\cdot \mid X_t)$ over the set of controls, and conditional on $X_t$ we sample a new control independently from this distribution. This generalizes the Markovian sampler we discuss above. We then force a trade-off between the entropy of the control and the accrued loss:
\[V(x_0) := \inf_{(\pi)} \E\left[\sum_{s=0}^\infty \gamma^s \cdot (\ell(X^u_s, U_s) - \tau H(\pi(\cdot \mid X_s)))\right],\]
\noindent In this case, Bellman's equation gives:
\[V(x) = \inf_{\mu \in \mathcal{P}(U)} \left\{\int_U \left[Q_\gamma(x,u) + \tau \log(\mu(u))\right] \; \mu(du)  \right\}.\]
\noindent This can be optimized to find the optimal policy $\pi$ pointwise, as well as the value function. In the case $U$ is finite,
these solutions can be expressed implicitly as
\begin{equation}
	\pi^*( u \mid x) \propto \exp\left(- \frac{Q_\gamma(x,u)}{\tau}\right), \quad V(x) = -\tau \log \left(\sum_{u \in U} \exp\left[- \frac{Q_\gamma(x,u)}{\tau}\right]\right).
	\label{eqn:optimal_entropy_regularized_policy}
\end{equation}

\noindent As we can see, the optimal policy is naturally a soft-min / Gibbs distribution with Hamiltonian $-Q_\gamma(x, \cdot ) / \tau$, and the value function
is essentially the log-partition function of this distribution. Note how entropy-regularization essentially smooths out
the strict minimization problem that occurs in equation \eqref{eqn:optimal_entropy_regularized_policy}. While equations \eqref{eqn:optimal_entropy_regularized_policy} only implicitly define the optimal policy, for certain choices of dynamics and loss functions we can be more explicit. Suppose we have some potential $\Phi:S \to \R$ and we define loss:
\[\ell(x, u)  := \Phi(x) +  \tau \log \sum_{v \in U} \exp\left(-\frac{\gamma}{\tau} \cdot \Phi(b(x,v))\right),\]
\noindent which is independent of $u$. Note that the second term is a simple soft-minimum. That is, we penalize spending time in states $x$ that have high-potential $\Phi(x)$ and that do not have neighboring low-potential states. In this setting:
\[\pi^*( u \mid x) \propto \exp\left(- \frac{\gamma(\Phi(b(x,u)) - \Phi(x))}{\tau}\right), \qquad V(x) = \Phi(x).\]
The self-balancing sampler fits nicely into this framework. If we choose state-space $S = \N^d$, controls $u \in [d]$, dynamics $b(x,u) = x + \delta_u $, and potential $\Phi(x) = \frac{1}{2}||x||^2$ then we have $\pi^*(u | x) \propto e^{-\gamma x_u/\tau}$. Hence we recover optimization problem Theorem \ref{thm:constant_optimization_problem}, the uniform $\beta_i \equiv \beta$ case. Analogously, if we instead choose $\Phi(x) = \frac{1}{2} \sum_i \beta_i(x_i - np_i^*)^2$ we recover optimization problem \ref{thm:general_optimization_problem}.

The former gives us another way of interpreting the role of $\beta$ in the self-balancing sampler: $\beta = \gamma / \tau$ measures the trade-off between minimizing future imbalance through the value term $Q_\gamma$ and maintaining randomness in the sampling policy via entropy-regularization.

Alternatively, if we choose a linear loss $\ell(x,u) = \beta_u x_u$, and myopic discounting $\gamma=0$, we see the policy $\pi^*(u \mid x) \propto \exp(-\beta_ux_u)$ is the minimizer of:
\[V(x) := \inf_\pi \left\{\sum_{i=1}^d [\pi(i|x)  \beta_ix_i] - H(\pi(\cdot \mid x)) \right\},\]
\noindent Hence the self-balancing sampler can be viewed as an entropy-regularized greedy mechanism that favors actions having smaller weighted current counts $\beta_i x_i$, analogous to Theorem \ref{thm:mirror_descent_optimization}.

We showed above that the self-balancing sampler converges at rate $O(n^{-1})$ on average. The theory of entropy-regularized optimal control provides us a convenient framework for extending these ideas to a broader class of samplers. The following shows as long as under-sampled indices are at least exponentially favored, we get the same convergence rate.

\begin{proposition}

Suppose we have sampler $\pi_i(x) \propto a_i\exp(-H_i(x))$ so that for $P = I - p^*1^T$ then $\pi(x) = \pi(x')$ whenever $Px=Px'$. Moreover, suppose there are constants $c,C > 0$ so for all $y$ with $\sum y_i = 0$ and $y_j \leq y_i$ we have: 
\[H_i(y) - H_j(y) \geq c(y_i-y_j) - C.\]
\noindent Then if $\mu_n$ is the empirical distribution associated to sampler $\pi$, we have:
\[\E || \mu_n - p^*||_{TV} \leq O(n^{-1}).\]
\label{prop:average_case_optimal_control}
\end{proposition}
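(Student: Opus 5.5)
The plan is a Foster--Lyapunov drift argument on the mean-centered counts $Y(n) = N(n) - np^*$, followed by the quantitative drift criterion of Lemma~\ref{lem:drift_lemma}. First, note that $Y(n)$ determines $\pi(N(n))$. Indeed, $PN(n) = N(n) - (\sum_i N_i(n)) p^* = N(n) - np^* = Y(n)$, so the hypothesis that $\pi(x)=\pi(x')$ whenever $Px = Px'$ makes $(Y(n))$ a Markov chain on the hyperplane $\{\sum_i y_i = 0\}$. Its increments $e_{X_{n+1}} - p^*$ are bounded by $2$. Since
\[\|\mu(n)-p^*\|_{\operatorname{TV}} = \tfrac{1}{2n}\|Y(n)\|_1,\]
it suffices to prove $\sup_n \E\|Y(n)\|_1 < \infty$.

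For the Lyapunov function I would take the weighted quadratic $V(y) = \frac12\sum_i y_i^2/p_i^*$. The $p^*$-weighting kills the deterministic part of the drift. Expanding,
\[\E[V(Y(n+1)) - V(Y(n)) \mid Y(n)=y] = \sum_i \frac{y_i}{p_i^*}\pi_i(y) - \sum_i y_i + O(1) = \sum_i \frac{y_i\,\pi_i(y)}{p_i^*} + K_0,\]
because $\sum_i y_i = 0$. The key step is then to show that this is at most $-c'\|y\|_\infty + K$ for constants $c',K$ depending only on $c,C,a,p^*,d$. Let $m = \min_j y_j$, attained at some $j$. Since the coordinates sum to zero, $|m| \geq \|y\|_\infty/(d-1)$. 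For any $k$ with $y_k \geq m+s$, the hypothesis gives
\[\pi_k/\pi_j = (a_k/a_j)e^{-(H_k-H_j)} \leq A e^{C-cs},\]
so $\pi_k \leq A' e^{-cs}$.

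I would split the sum into two parts. The positive terms have $y_k>0$, hence $y_k - m \geq y_k + |m|$. This gives $y_k\pi_k/p_k^* \leq A'' y_k e^{-c(y_k+|m|)}$, which is uniformly bounded. For the negative terms, take $s = |m|/2$. The indices with $y_k \leq m/2$ carry total mass at least $1 - dA'e^{-c|m|/2}$, and each contributes at most $(m/2)\pi_k/\max_i p_i^*$. Terms with $m/2 < y_k \leq 0$ contribute nonpositively and can be dropped. Once $|m|$ exceeds a constant, this yields drift $\leq -c'\|y\|_\infty + K$. For bounded $\|y\|$ the drift is trivially bounded.

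Finally, I would feed this into Lemma~\ref{lem:drift_lemma}. If its hypotheses are phrased as a constant negative drift outside a compact set for a function with bounded increments, I would pass to $W = \sqrt{V}$. Because $V$ is quadratic and the increments of $Y$ are bounded, $W$ has bounded increments. Its drift is at most $(\E\Delta V)/(2W) + O(1/W) \leq -c''$ once $W$ is large. The lemma (a Pemantle--Rosenthal/Hajek-type bound) then gives $\sup_n \E W(Y(n)) < \infty$ starting from $Y(0)=0$, hence $\sup_n \E\|Y(n)\|_1 < \infty$, and the $O(n^{-1})$ rate follows. I expect the main obstacle to be the drift inequality in the previous paragraph. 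The hypothesis only compares each $H_i$ to a coordinate with smaller $y$, so the argument must anchor everything at the minimum coordinate. It also needs the sum-zero constraint to guarantee that this minimum is of order $-\|y\|$; that is the part to check carefully, including the uniformity of constants.
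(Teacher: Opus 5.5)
Your proof is correct. It differs from the paper's mainly in the choice of Lyapunov function.

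\textbf{The paper's route.} The paper works directly with $W_n=\|Y(n)\|_1$, the quantity in the total-variation bound. It writes the one-step change as $\sum_i h_i(y_i)+g_{X_{n+1}}(y_{X_{n+1}})$, with $h_i(u)=|u-p_i^*|-|u|$ and $g_i(u)=|u+1-p_i^*|-|u-p_i^*|$. It then bounds the deterministic part by $1-2p^*_{\min}$ once some $y_j\ge p_j^*$. For the random part it uses the same anchoring-at-the-minimum estimate you use: all but $O(e^{-c\|y\|_1/(2d)})$ of the mass of $\pi(y)$ sits on coordinates with $y_i\le -1$, where $g_i=-1$. This gives drift at most $-p^*_{\min}$ outside a compact set and increments at most $2$. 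Lemma~\ref{lem:drift_lemma} then applies immediately, with no square root or norm comparison.

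\textbf{Your route.} Your $1/p_i^*$-weighted quadratic makes the drift an exact expression, $\sum_k \pi_k(y)\bigl(y_k/p_k^*+(1-p_k^*)/(2p_k^*)\bigr)$. This avoids the case analysis on the signs of $y_i$ and $y_i\pm p_i^*$, and gives the stronger intermediate bound $\E[\Delta V]\le -c'\|y\|_\infty+K$. The price is the extra passage to $\sqrt V$. You use that $\sqrt V$ is a norm for the increment bound, and the tangent-line inequality for the drift; the latter already gives $\E[\Delta W]\le \E[\Delta V]/(2W)$, so your $O(1/W)$ term is unnecessary. You also lose dimension- and $p^*_{\min}$-dependent factors when comparing $\sqrt V$, $\|y\|_\infty$ and $\|y\|_1$.

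\textbf{Comparison.} Your argument is essentially the template of the paper's proof of Theorem~\ref{thm:average_case_conv_rate} (the quadratic $V_n=\sum_i\xi_i^2/\beta_i$ followed by $U_n=\sqrt{1+V_n}$), transplanted to general $H$. That makes the two results' proofs uniform. The paper's $\ell^1$ argument is shorter and tailored to the target quantity, with explicit drift constant $p^*_{\min}$ and jump bound $2$.
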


\noindent As in the previous sections, $p^*$ plays the role of the limiting distribution. Our assumptions on $\pi$ simply state that it is a function of the mean-centered counts alone, and the proper centering $p^*$ determines the limiting distribution. 

If we have a sampler generated by potential $\Phi$ as above, then the assumptions of Proposition \ref{prop:average_case_optimal_control} reduce to the constraints:
\[\Phi(x + \delta_i) - \Phi(x) = \Phi(Px + \delta_i) - \Phi(Px) + \kappa,\]
\[\Phi(x + \delta_i) - \Phi(x+\delta_j) \geq c((Px)_i - (Px)_j) - C,\]
for some $\kappa$ independent of $x, i$. The self-balancing sampler is the special case $H_i(y) = \beta_i y_i$ which is globally linear and arises from potential $\Phi(x) = \frac{1}{2}||x||^2$.

\section{Concluding Remarks}

We introduced a family of self-balancing sequential samplers that adaptively downweight previously sampled outcomes. The resulting process interpolates between IID sampling, which is maximally random but poorly balanced, and deterministic sampling, which is optimally balanced but perfectly predictable. Despite retaining randomness at every step, the self-balancing sampler achieves empirical convergence at the deterministic scale \(O(n^{-1})\) in expectation and \(O(\log n/n)\) almost surely.

A central theme of the paper is that this sampler is not merely a heuristic. It arises naturally from an invariance principle for potential-based Markovian samplers, is the unique solution to a one-step entropy-regularized imbalance minimization problem, implements a stochastic mirror descent on the space of next-step sampling distributions, and shows up informally in actual policy decisions. Together, these perspectives give complementary evidence that such exponential sampling rules arise naturally.

The weak-biasing regime makes the convergence--predictability tradeoff explicit. As the bias parameter decreases, the centered counts fluctuate on a larger scale and mix more slowly, preserving more unpredictability. As the bias parameter increases, the process becomes more tightly mean-reverting, improving balance but making the next sample easier to predict. The diffusion approximation quantifies this tradeoff and shows that the geometry of the target law matters: tied maximizers of \(p^*\) produce first-order Gaussian corrections to predictability, while unique maximizers with fixed gaps only contribute through rare crossing events.

Several extensions remain open. One natural direction is to replace categorical balance with covariate balance, sampling units according to their contribution to a cumulative covariate imbalance. Namely, suppose each unit $i \in [d]$ has some covariate vector $v_i$ assigned to it. If we have samples $X_1,..., X_n \in [d]$ define the cumulative (centered) covariate vector as:
    \[S_n := \sum_{i=1}^n v_{X_i} - n \bar{v}, \qquad \bar{v} := \frac{1}{d} \sum_{i=1}^d v_i.\]
\noindent We can then generalize our self-balancing sampler so that index $i$ is sampled with probability:
    \[p_i(n) \propto \exp\left(-\sum_{j} \beta_j v_{ij} (S_n)_j\right).\]
\noindent When $v_i = e_i$ are the standard basis vectors, this recovers our original self-balancing sampler. This provides a sequential analog to the Gram-Schmidt walk of \cite{BansalDadushGargLovett2019}, which in the context of \cite{Harshaw01102024} provides the key tool for balancing covariates across treatment groups in randomized controlled trials. 

Another is to study risk-weighted or nonstationary versions of the sampler, where the target distribution changes over time or depends on observed outcomes. Such extensions would bring the model closer to applications in inspections, audits, randomized experiments, and other settings where one seeks to balance coverage, responsiveness, and resistance to prediction. For instance, in both our health-inspections and anti-doping examples, follow-up inspections are scheduled more frequently for those performing poorly on the initial inspection.

Overall, we have shown that self-balancing exponential sampling rules provide a simple yet principled framework for designing sequential randomized procedures that balance coverage, unpredictability, and long-run empirical stability.

\paragraph*{Acknowledgements.} The authors gratefully acknowledge Sinho Chewi, Steven Evans, and Robin Pemantle for many helpful comments. Daniel Raban acknowledges financial support from Sandia National Laboratories, a U.S.\ Department of Energy multimission laboratory.

\newpage 

\setlength{\nomlabelwidth}{3cm} 
\renewcommand{\nomlabel}[1]{\hfil #1 \hfil} 

\setlength{\nomitemsep}{8pt} 

\printnomenclature

\newpage

\bibliographystyle{plainnat}
\bibliography{references}


\newpage 
\appendix
\section{Empirical Results}
\label{appendix:empirical_results}

In this section, we provide some additional empirical evidence for some of the $\beta$ scalings we developed heuristically in Section \ref{sec:diffusion_approximations} above. Simulation code can be found in the GitHub repository \href{https://github.com/zackmcnulty/self_balancing_sampler}{self\_balancing\_sampler}. 

In the first few sections, we study the behavior of the self-balancing sampler under two different unit-norm choices of $\beta$ when $d=4$: the uniform $\beta$ and the non-uniform $\beta \propto [1.1, 0.8, 0.9, 1.2]$, the latter of which also has a unique maximizer of $p^*(\beta)$. In the last section, we give an example of how the self-balancing sampler can be applied to the citizen's assembly problem, and analyze its behavior in this setting.

\subsection{Convergence Rate}

 For $d_n := \E ||\mu_n - p^*||_{TV}$, Theorem \ref{thm:average_case_conv_rate} and the preceding remarks imply

\[\lim\sup_{m \geq n} md_m  = C(\beta).\]

\noindent The goal of this section is to determine how the bias parameter $\beta$ influences the coefficient of the leading $O(n^{-1})$ term. Theorem~\ref{thm:average_case_conv_rate} and our heuristic analysis in Section \ref{sec:diffusion_approximations} suggests
\[C(c\beta) = \begin{cases}
    \Theta(c^{-1/2}) & p(0) = p^*,\\
    \Theta(c^{-1}) & p(0) \neq p^*.
\end{cases}\]
We provide empirical evidence these are the correct scalings. To do so, we generate estimates $\hat{d_n}$ by simulating $M$ trajectories of the self-balancing sampler and taking the empirical average
    \[\hat{d}_N = \frac{1}{M} \sum_{i=1}^M\  ||\mu_N^{(i)} - p^*||_{TV},\]
\noindent for some large $N$. From here, we estimate
    \[\hat{C}(\beta) = N\hat{d}_N.\]
To get a sense of how this constant varies with $\beta$, we fix some unit-norm $\beta$ and study how $C(c\beta)$ varies as we vary the scaling $c > 0$. Since $p^*(\beta) = p^*(c\beta)$ for any $c>0$, all these samplers have the same limiting distribution.

For our simulation, we chose $M=1000$ and $N=10000$. The latter was chosen large enough so $N||\mu_N - p^*||_{TV}$ approximately stabilizes across the range of $c$ values tested. The results are displayed in Figure \ref{fig:convergence_plots}. Empirically, this supports the $\Theta(c^{-1})$ and $\Theta(c^{-1/2})$ scalings our analysis suggests, at least in the small $\beta$ regime.  

\begin{figure}[t]
    \centering

    \begin{subfigure}[t]{0.48\textwidth}
        \centering
        \includegraphics[width=\textwidth]{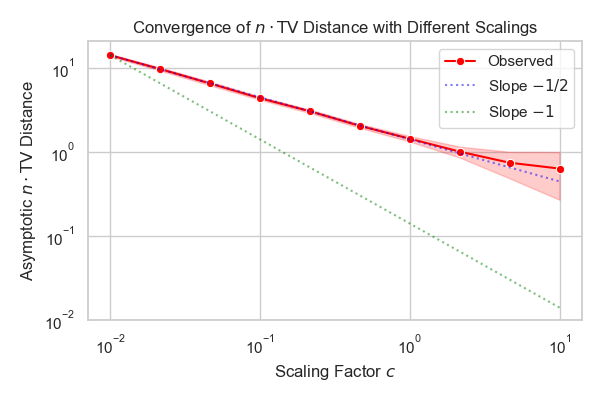}
        \caption{$p(0) = p^*$}
    \end{subfigure}
    \hfill
    \begin{subfigure}[t]{0.48\textwidth}
        \centering
        \includegraphics[width=\textwidth]{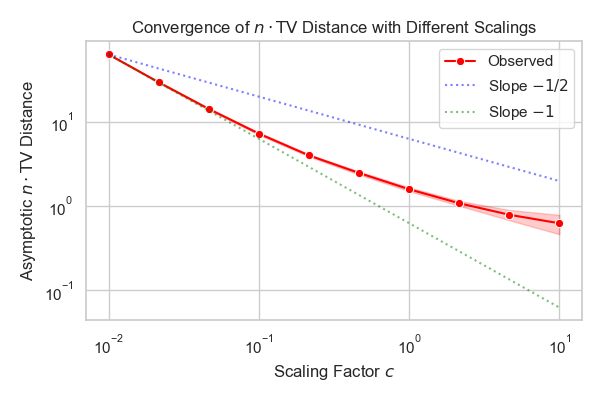}
        \caption{$p(0) \neq p^*$}
    \end{subfigure}

    \caption{Log-log plots of $Nd_N$ as a function of the scaling factor $c$ for (a) uniform $\beta = [0.5, 0.5, 0.5, 0.5]$ and (b) non-uniform $\beta \propto [1.1, 0.8, 0.9, 1.2]$. In both cases $p(0)$ is the uniform distribution. Dotted lines show the expected $\Theta(c^{-1})$ and $\Theta(c^{-1/2})$ scalings in the $p(0) \neq p^*$ and $p(0) = p^*$ regimes respectively.}
    \label{fig:convergence_plots}
\end{figure}

\subsection{Predictability}

In this section, we explore how the parameter $\beta$ affects the predictability of the sampler, as described in Section \ref{sec:adversarial_interpretation}. There, we showed that the key factor was whether or not $p^*$ has a unique maximizer. If it does not, then the predictability scales like $\Theta(\sqrt{\beta})$ above the naive baseline $p_{max}^*$ of IID sampling. If the maximizer is unique, we saw this first order correction vanishes, and the gap is exponentially small. We compare it to our two naive baselines: IID sampling and deterministic sampling. 

In IID sampling, the adversary will clearly always predict the index $i$ which maximizes $p_i^*$, and hence its long-term accuracy will be $p_{max}^*$. In greedy sampling, we can make the sampler a bit more unpredictable by sampling the next index uniformly from $\arg\max_{i} p_i(n)$, rather than just the purely deterministically. In the uniform $\beta$ case, this gives an accuracy of approximate $\frac{\log(d)}{d}$, compared to the $d^{-1}$ of IID sampling. In the non-uniform $\beta$ case, we estimate the error rate under the greedy model through a simple Monte-carlo simulation. Namely, assume at step $n$ the adversary has probability $m_n^{-1}$ of correctly guessing the next sample, where
    \[m_n = |\arg\max_{i} p_i(n)|,\]
is the number of outcomes that are maximally likely to be sampled. We generate samples until the average accuracy $N^{-1}\sum_{n=1}^N m_n^{-1}$ approximately converges. Similarly, we can likewise estimate the adversarial accuracy for the self-balancing sampler by averaging these estimates over many sampling trajectories.

The results are shown in Figure \ref{fig:predictability_plots}. In the uniform $\beta$ case, prediction accuracy increases gradually as the bias scale (c) increases, and in the small-(c) regime the gap above the IID baseline roughly follows the predicted $\Theta(\sqrt{c})$ scaling. In the non-uniform, unique-maximizer case, prediction accuracy increases more sharply as (c) grows. Conversely, as $c\downarrow 0$, the gap above the IID baseline decays much faster than the $\sqrt c$ scaling observed in the uniform case.

\begin{figure}[t]
    \centering
    \begin{subfigure}{\textwidth}
        \centering
        \includegraphics[width=\textwidth]{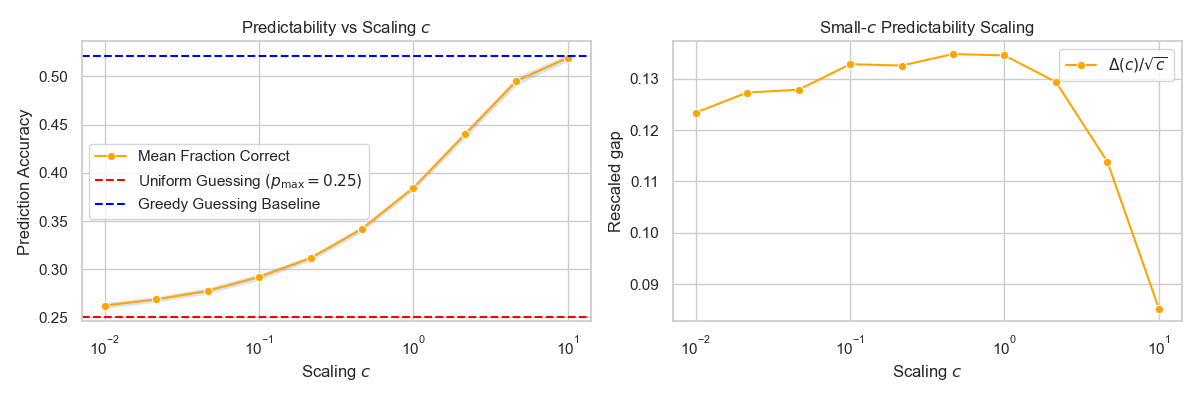}
        \caption{Uniform $\beta = [0.5, 0.5, 0.5, 0.5]$.}
        \label{fig:predictability_uniform_top}
    \end{subfigure}

    \vspace{0.3cm}

    \begin{subfigure}{\textwidth}
        \centering
        \includegraphics[width=\textwidth]{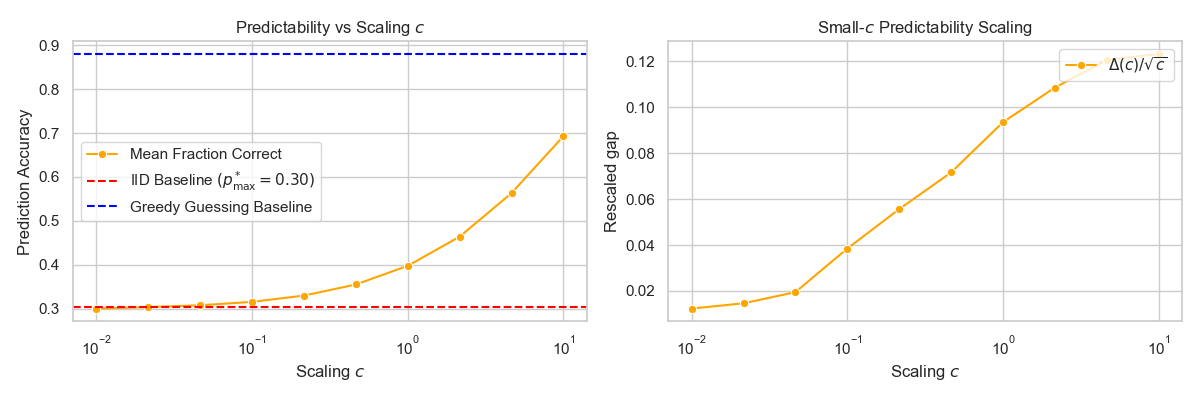}
        \caption{Non-uniform, distinct maximizer $\beta \propto [1.1, 0.8, 0.9, 1.2]$.}
        \label{fig:predictability_nonuniform_bottom}
    \end{subfigure}

    \caption{Predictability plots for different choices of $\beta$. (left) Relationship between prediction accuracy and the scaling $c$ of $\beta$. (right) The ratio of the gap $\Delta(c) := \text{accuracy} - p_{max}^*$ from the uniform baseline and the first order $\sqrt{c}$ expected scaling. }
    \label{fig:predictability_plots}
\end{figure}

\subsection{Trade-Off}

Lastly, we study the trade-off between predictability and the convergence rate. As discussed in the previous two sections, we quantify the former using the adversary's prediction accuracy, and the latter using the limit $\lim_{n\to \infty} n d_n$. Figure \ref{fig:trade_off} highlights the results.  

Note the predictability starts essentially at the baseline $p_{max}^*$, and increases as we increase the scaling $c$. It starts smaller in the uniform $\beta$ case merely because $p_{max}^*$ is smaller than in the non-uniform case. However, we see that in the latter the predictability is much slower to increase initially, but then quickly becomes more predictable as we scale $\beta$.

\begin{figure}[t]
    \centering

    \begin{subfigure}[t]{0.48\textwidth}
        \centering
        \includegraphics[width=\textwidth]{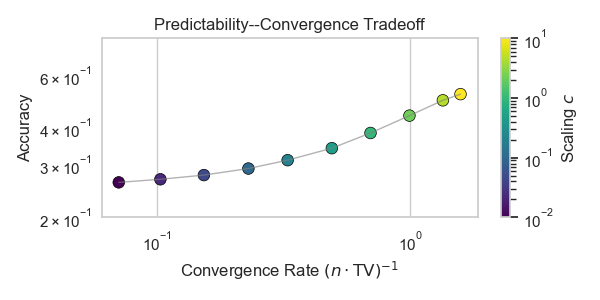}
        \caption{Uniform $\beta$.}
    \end{subfigure}
    \hfill
    \begin{subfigure}[t]{0.48\textwidth}
        \centering
        \includegraphics[width=\textwidth]{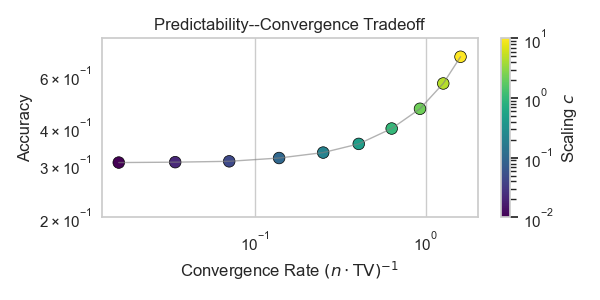}
        \caption{Non-uniform $\beta$.}
    \end{subfigure}

    \caption{Log-log plots of the trade-off between adversarial accuracy and the convergence rate $(Nd_N)^{-1}$.  (a) uniform $\beta = [0.5, 0.5, 0.5, 0.5]$ and (b) non-uniform $\beta \propto [1.1, 0.8, 0.9, 1.2]$. }
    \label{fig:trade_off}
\end{figure}

\subsection{Citizen's Assembly}
\label{sec:citizens_assembly}

In this section, we simulate the self-balancing sampler on the citizen's assembly problem discussed in Section~13 of~\cite{flanigan2021fair}. The setting is as follows.

Suppose we want to select an assembly of $k = 200$ people that includes at least $99$ members of each of the following categories: women, men, liberals, and conservatives. Let the pool consist of $1{,}000$ conservative men, $999$ liberal women, and $1$ conservative woman. The goal is to generate an assembly satisfying these quotas while ensuring that each individual in the pool has a reasonable chance of being selected. More precisely, we want to maximize the minimum individual inclusion probability induced by the sampling scheme. We refer to this minimum inclusion probability as the \textit{representativeness} of the scheme.

In this constructed population, the stratified sampling algorithm that selects $100$ uniformly drawn women and $100$ uniformly drawn men satisfies the quotas and selects each pool member with equal probability $10\%$. On the other hand, the \textit{LEGACY} algorithm discussed in \cite{flanigan2021fair} has only a $0.2\%$ chance of selecting the conservative woman. Thus, although LEGACY aims to balance covariates across the assembly, it provides poor ex ante representation for the rare class, since the conservative woman is almost never selected.

This stratified sampling strategy is tailored to the special structure of the example and does not provide a general solution. It works here because the population can be partitioned by gender into two groups of equal size, so selecting $100$ women and $100$ men simultaneously satisfies the gender quotas and gives every individual inclusion probability $100/1000 = 10\%$. In a more general population, the quota constraints may involve several overlapping attributes, and no single stratification need both satisfy the quotas and equalize individual inclusion probabilities. Our goal is therefore to test whether the self-balancing sampler can approximate the same effect in a more flexible way: biasing selection toward underrepresented attributes while preserving reasonable individual-level representativeness.

To apply the self-balancing sampler to this setting, we consider two sequential strategies:

\begin{enumerate}
    \item \textbf{Stratified self-balancing over classes}: We first choose a class in
    \[
        \{\text{man}, \text{woman}\} \times \{\text{conservative}, \text{liberal}\}
    \]
    using the self-balancing sampler, and then choose an individual uniformly at random from the selected class. The relative weights $(\beta_i^{-1})_{i=1}^4$ are chosen proportional to the initial class sizes. This ensures that, in the long-run, individuals are selected uniformly from the total population.

    \item \textbf{Attribute-balanced sampler}: Each individual is assigned weight proportional to
    \[
        \exp\left(
            -\beta_{\mathrm{gender}} N_{\mathrm{gender}}
            -\beta_{\mathrm{party}} N_{\mathrm{party}}
        \right).
    \]
    Equivalently, the sampler biases selection according to the current gender and party imbalances, and then samples among individuals with probabilities induced by these attribute-level weights.
\end{enumerate}

By design, the attribute-based sampler aims to match the marginal distributions of the assembly to those of the population. This tends to make it better at matching desired marginal quotas, but at the cost of fairness: it tends to under-sample rare joint classes (e.g. the conservative woman) and biases the sampling distribution away from the population distribution. On the other hand, the stratified sampler aims to be as fair as possible. In settings where fairness is the main objective, we recommend the stratified sampler.

As a baseline, we additionally study the simple strategy where individuals are selected uniformly at random to be part of the assembly. We call this the \textit{IID uniform baseline}. For each of these sampling strategies, we study the following quantities:

\begin{itemize}
    \item The probability that the quota constraints are satisfied.
    \item The individual inclusion probabilities induced by the sampling scheme, especially the worst-case inclusion probability.

\end{itemize}

 Moreover, we are interested in the influence the bias $\beta$ has on these quantities. To investigate this tradeoff, we simulate many assemblies sampled \textit{without replacement} according to these three sampling schemes. While our theory is developed for sampling with replacement, the difference between these situations for a large population size makes this a reasonable exploratory application. As before, we fix baseline bias parameters $\beta$ and rerun the citizen's assembly experiment with rescaled parameters $\beta' = c\beta$ over a range of scalings $c$.

For each value of $c$ and each sampler, we estimate both the probability that the quota constraints are met and the worst-case probability that a specific individual is selected. For the former, we use the fraction of simulated assemblies satisfying the quota constraints. For the latter, we estimate the inclusion probability $\hat p_i$ of each individual using the fraction of simulated assemblies in which that individual appears, and then take the minimum over individuals. To generate confidence bands, we bootstrap resample from the collection of simulated assemblies.

The estimator of the minimum selection probability is downward biased, since it is the minimum of many noisy empirical estimates. Heuristically, uniformly sampling a fraction $p$ of a population of size $m$ across $n$ assemblies, the empirical selection probability of individual $i$ satisfies
\[
    \hat p_i \sim n^{-1}\mathrm{Binom}(n,p) \approx N\left(p, \frac{p(1-p)}{n}\right),
\]
 for large $n$, and thus
\[
    \mathbb{E}\min_{i \leq m} \hat p_i
    \approx
    p + \mathbb{E}\min_{i \leq m}
    N\left(0, \frac{p(1-p)}{n}\right)
    \approx
    p - \sqrt{\frac{2p(1-p)\log m}{n}}.
\]
Thus, the empirical minimum underestimates the population-level inclusion probability by an amount of order $\sqrt{\log(m) / n}$. This motivates the $IID$ baseline.

The results are shown in Figure~\ref{fig:citizen_assembly} for the class sizes described above, using $10^4$ simulated assemblies and $1000$ bootstrap samples. Figure~\ref{fig:citizen_quota_prob} shows that quota-satisfaction probability increases with the bias scaling $c$ and is already close to one for relatively small bias. Figure~\ref{fig:citizen_representativeness} shows a sharper distinction between the two self-balancing variants: the stratified sampler remains essentially optimally representative across the tested range, whereas the attribute-balanced sampler loses representativeness at moderate bias. This appears to be a parity effect. Because the population consists almost entirely of conservative men and liberal women, the attribute-based sampler tends to alternate between these two classes as $\beta \to \infty$, while the conservative woman receives little or no mass. The simulations confirm that the conservative woman is again underrepresented in this high-bias regime.

\begin{figure}[t]
    \centering

    \begin{subfigure}[t]{0.48\textwidth}
        \centering
        \includegraphics[width=\textwidth]{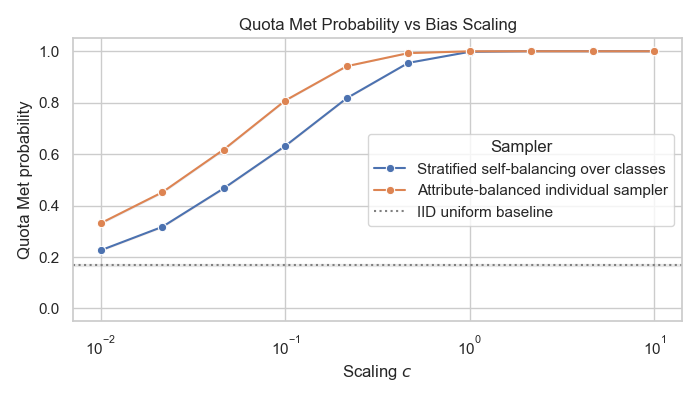}
        \caption{Quota satisfaction probability}
        \label{fig:citizen_quota_prob}
    \end{subfigure}
    \hfill
    \begin{subfigure}[t]{0.48\textwidth}
        \centering
        \includegraphics[width=\textwidth]{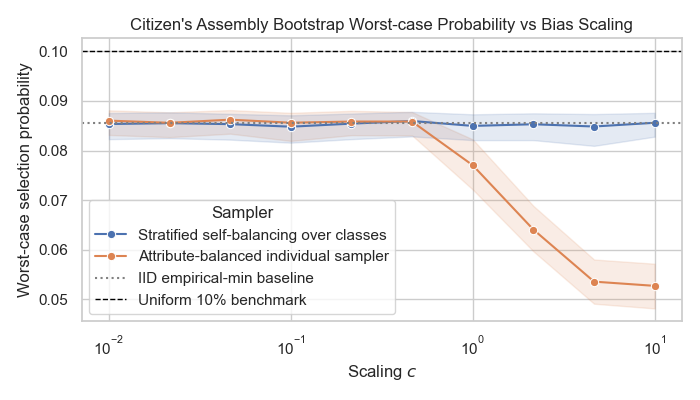}
        \caption{Worst-case inclusion probability}
        \label{fig:citizen_representativeness}
    \end{subfigure}

    \caption{Citizen's assembly simulation results. The effect of the rescaled bias parameter $\beta' = c\beta$ on the probability that the quota constraints are met and on the worst-case individual inclusion probability.}
    \label{fig:citizen_assembly}
\end{figure}


\newpage 
\section{Proof Appendix}
\label{appendix:proofs}

In this Appendix, we collect the formal proofs of all the main results of the paper.

\subsection{Section 2.2 Proofs: Potential-Based Sampling}
\label{appendix:2_2_proofs_potential_based_sampling}

\proofappendixheading{Proof of Proposition~\ref{prop:projection_centered_counts_are_markovian}}
\proofappendixstatement{Proposition}{If $\pi$ is $v$-invariant and 
$\mathrm{gcd}(v_1,..., v_d) = 1$ then $v_dN_i(n) - v_i N_d(n),$ is a Markov chain on $\Z^{d-1}$.}

    \begin{proof}
        Let $x,y \in \N^d$. Define
            \[T_v(x) := (v_dx_i - v_ix_d)_{i=1}^{d-1}.\]
        \noindent First we show that $x \sim_v y$ if and only if $T_v(x) = T_v(y)$. If $y = x + kv$ then
            \[T_v(y) = v_dy_i - v_iy_d = (v_dx_i -v_ix_d) + k(v_dv_i - v_iv_d) = v_dx_i - v_ix_d = T_v(x),\]
        \noindent as desired. Conversely, if
        $T_v(x) = T_v(y)$ then re-arranging shows
            \[\frac{x_i - y_i}{v_i} = \frac{x_d - y_d}{v_d}, \quad \forall i \in [d-1].\]
        \noindent Since the right hand side is independent of $i$, we get $x = y + kv$ for some constant $k \in \R$. Since $x,y\in \N^d$ and $\mathrm{gcd}(v_1,..., v_d) = 1$, we must in fact have $k \in \Z$. Thus $x \sim_v y$. 

        Now we show that the counts process descends to a Markov chain under $T_v$. If $T_v(x)=T_v(y)$, then $y=x+kv$ for some $k\in\mathbb Z$, and hence $v$-invariance gives $\pi(y)=\pi(x)$. Therefore, the conditional law of $T_v(N(n+1))$ given $N(n)=x$ depends on $x$ only through $T_v(x)$. Thus $(T_v(N(n)))_{n\geq 0}$ is a Markov chain on $T_v(\mathbb N^d)$.
    \end{proof}

\proofappendixheading{Proof of Proposition~\ref{prop:charactizing_v_invariance}}
\proofappendixstatement{Proposition}{If $\pi^\phi$ is $v$-invariant, then for each $i$ there exist constants $b>0$ and $c_{i,k}$ such that $\phi_i(mv_i+k)=c_{i,k}b^m$ for all $m\in\N$ and $k\in\{0,\dots,v_i-1\}$. If, in addition, each $\phi_i$ is log-convex or log-concave, then $\phi_i(n)=c_ir_i^n$ and $(\log r_1,\dots,\log r_d)\propto (v_1^{-1},\dots,v_d^{-1})$.}
\begin{proof}
    Denote $S(x) := \sum_j \phi_j(x_j)$. 
    Then $v$-invariance implies:
    \[\frac{\phi_i(x_i)}{S(x)} = \frac{\phi_i(x_i + v_i)}{S(x + v)}, \quad \frac{\phi_j(x_j)}{S(x)} = \frac{\phi_j(x_j + v_j)}{S(x + v)}.\]
    \noindent Dividing these equalities and rearranging yields:
    \[\frac{\phi_i(x_i + v_i)}{\phi_i(x_i )} = \frac{\phi_j(x_j + v_j)}{\phi_j(x_j)}.\]
    \noindent As the left side is independent of $x_j$ and the right side is independent of $x_i$, these ratios must equal some constant $b > 0$ independent of $x_i, x_j$. 
    Hence for every $i \in [d]$ and $k \in \{0, 1, ..., v_i-1\}$ we must have
    \[\phi_i(m v_i + k) = \phi_i(k) \cdot b^m, \quad \forall m \in \N.\]
    \noindent Log-concavity implies for $k \in \{0, 1, ..., v_{i} - 2\}$:
    \[\frac{\phi_i(0)}{\phi_i(1)} \leq \frac{\phi_i(k)}{\phi_i(k+1)} \leq \frac{\phi_i(v_i)}{\phi_i(v_i+1)} = \frac{\phi_i(0)b}{\phi_i(1)b}.\]
    \noindent If instead $\phi_i$ is log-convex, the direction of the inequalities all swap. Either way, we see these ratios are constant, and thus $\phi_i(m) = c_i\alpha_i^m$ where $\alpha_i = b^{1/v_i}$. 
    Thus we have
        \[\log(\alpha_i) = \log(b) \cdot v_i^{-1},\]
    \noindent giving the desired result.
\end{proof}

\proofappendixheading{Proof of Lemma~\ref{lem:projection_centered_has_stationarity}}
\proofappendixstatement{Lemma}{
       For $\beta_i = cv_i^{-1}$ for $v \in \N^d$ and $c > 0$, the projection-centered chain $\tilde{Y}^v(n) = v_dN_i(n) - v_i N_d(n)$ associated to the self-balancing sampler~\eqref{eqn:self_balancing_sampler_transition_probs} is irreducible and 
    positive recurrent, and thus has a unique stationary distribution $\eta$ on some subset of $\Z^{d-1}$.}

    \begin{proof}
        Clearly the state-space of $\tilde{Y}^v$ is
        the subset of $\Z^{d-1}$ generated by nonnegative integer combinations of the vectors $\{v_d e_i\}_{i=1}^{d-1} \cup \{-v\}$.
        Of course, this does not always span $Z^{d-1}$, even when $\mathrm{gcd}(v_i) = 1$. For example, if $v_1 = 2, v_2 = 3, v_3 = 4$ then the first coordinate
        of $\tilde{Y}^v$ is always a multiple of $2$. 

        Nonetheless, we show the chain is still irreducible on this state-space. To see why, suppose:
        \[x = v_d\sum_{i=1}^{d-1} a_i e_i - a_d v, \quad a_i \geq 0.\]
        \noindent Clearly we can go from $0$ to $x$ by sampling $a_i$ times from index $i$ for each $i \in [d-1]$ and sampling $a_d$ times 
        from index $d$. Conversely, we can go from $x$ to $0$ by sampling index $i \in [d-1]$ a total of $k v_i - a_i$ times, for some $k$ large enough so $k v_i \geq a_i$ for all $i \in [d - 1]$ and $kv_d \geq a_d$. 
        Then sample coordinate $d$ a total of $k v_d - a_d$ times. 

        Lastly, we show the chain is positive recurrent by developing a suitable Lyapunov function and proving a Foster-Lyapunov drift condition. See, for example, \cite{meyn2009markov}, for more information about this technique.

        Let $\beta_i = v_i^{-1}$ and define
        \[Q_i := \beta_i N_i - \log(p_i(0) \beta_i), \quad \overline{Q} = \frac{1}{d} \sum_{i=1}^{d} Q_i.\]
        \noindent The constants $\log(p_i(0) \beta_i)$ serve an analogous role to $c_\beta$ in our diffusive limit:
        the shift the chain so it is centered at the zero-drift location. Note
        \[p_i = \frac{p_i(0)e^{-\beta_i N_i}}{\sum_j p_j(0)e^{-\beta_j N_j}} = \frac{v_i e^{-Q_i}}{\sum_j v_j e^{-Q_j}},\]
        \noindent is a function of $Q$. Using this, define Lyapunov function
        \[V(Q) := \sum_{i=1}^{d} Z_i^2, \quad Z_i := Q_i - \overline{Q}.\]
        \noindent Then $V$ is a function of the projection-centered counts $v_d N_i - v_i N_d$ alone. Moreover, $V(Q) = V(Q + c 1)$ for all $c \in \R$, so
        it suffices to consider the case $Q_d = 0$. 
        We will show $V$ has negative drift outside some finite set. First, note as $\sum Z_i = 0$
        \[V(Q + \Delta Q) = 2 \sum_{i=1}^{d} Z_i \Delta Q_i + \sum_{i=1}^{d} (\Delta Q_i - \overline{\Delta Q})^2\]
        \noindent Thus $V$ has expected drift
        \[\E[ \Delta V(Q) \mid Q = q] = 2 \sum_{i=1}^d Z_i p_i(q)\beta_i + \frac{d-1}{d} \sum_{i=1}^d p_i(q) \beta_i^2\]
        \noindent The second term is at most $C = \max \beta_i^2$. The first term is sufficiently negative on
        \[F := \left\{Q \in \N^d : \max_i Q_i - \min_i Q_i > R, \quad Q_d = 0\right\}.\]
        \noindent To see why, suppose $M = \arg\max_i Q_i$ and $m = \arg\min_i Q_i$.
        \begin{align*}
            \sum_{i=1}^d Z_i p_i\beta_i & = \frac{1}{d} \sum_{i < j} (q_i - q_j)(p_i \beta_i - p_j\beta_j).\\
            & = \frac{1}{d \sum_j v_j e^{-q_j}} \sum_{i < j} (q_i - q_j)(e^{-q_i} -  e^{-q_j}).\\
            & \leq \frac{(q_m - q_M)(e^{-q_m} -  e^{-q_M})}{d \sum_j v_j e^{-q_j}} \\
            & \leq \frac{(q_m - q_M)(e^{-q_m} -  e^{-q_M})}{d^2 e^{-q_m} \max_i v_i} \\
            & \leq - \frac{R(1-e^{-R})}{d^2 \max_i v_i} \cdot \\
        \end{align*}

        \noindent where in the first inequality we observed every term in the sum is nonpositive. The complement $F^c$ is a finite subset of $\Z^{d-1}$, 
        so we have the desired drift condition. 
    \end{proof}

\proofappendixheading{Proof of Lemma~\ref{lem:sampling_at_stationarity_gives_pstar}}
\proofappendixstatement{Lemma}{Suppose $\pi$ is $v$-invariant and the associated projection-centered chain has stationary distribution $\eta$. If $y\sim\eta$ and $X\mid y\sim\pi(y)$, then $\P(X=i)=\frac{v_i}{\sum_j v_j}$ for each $i\in[d]$.}
\begin{proof}
    Suppose $\tilde{Y}^v \sim \eta$. Let $p_i := \P(X = i)$ and define $\Delta_n \tilde{Y}^v := \tilde{Y}^v(n+1) - \tilde{Y}^v(n)$. If $\tilde{Y}^v(n) \sim \eta$, then 
    $\tilde{Y}^v(n+1) \sim \eta$ as well. Hence $\E \Delta_n \tilde{Y}^v = 0 \in \Z^{d-1}$. As discussed above, we can view $\tilde{Y}^v(n)$ as the image of $N(n)$ under the map
    \[x \mapsto (v_dx_i - v_ix_d)_{i=1}^{d-1}.\]
    \noindent Hence:
    \[(\Delta_n \tilde{Y}^v)_i = \begin{cases}
        v_d & w.p. \quad p_i,\\
        -v_i & w.p. \quad p_d,\\
        0 & w.p. \quad 1 - p_i - p_d.
    \end{cases}\]
    \noindent Hence the $i^{th}$ coordinate of this expectation is:
    \[(\E \Delta_n \tilde{Y}^v)_i = v_dp_i - v_ip_d = 0.\]
    \noindent Thus $p_i = \frac{v_i}{v_d}p_d$. Since $\sum_{i=1}^d p_i = 1$, we see:
    \[\sum_{i=1}^d \frac{v_i}{v_d} = \frac{1}{p_d} \to p_d = \frac{v_d}{\sum_{j=1}^d v_j},\]
    \noindent which yields the desired result. 
\end{proof}

\newpage 
\subsection{Section 2.3 Proofs: Convergence Rates}
\label{appendix:2_3_convergence_rate_proofs}


\proofappendixheading{Proof of Proposition~\ref{prop:empirical_averages}}
\proofappendixstatement{Proposition}{For every sequential sampling rule,
\[
\E\left[\max_{1\le i\le d}\left|\mu_i(n)-\frac1n\sum_{m=0}^{n-1}p_i(m)\right|\right]\lesssim \frac{d}{\sqrt n}.
\]
If the projection-centered count process is irreducible and positive recurrent, then both $\mu_i(n)$ and $\frac1n\sum_{m=0}^{n-1}p_i(m)$ converge almost surely.}

\begin{proof}
    For a given $1 \leq i \leq d$, consider the sequence $M_{i}(0) = 0$, $M_{i}(n) \coloneqq N_{i}(n) - \sum_{m=0}^{n-1} p_{i}(m)$. This is a martingale adapted to $\mathcal F_n \coloneqq \sigma(X_1,\dots,X_n)$, as
    $$\mathbb E[M_{i}(n+1) - M_{i}(n) \mid \mathcal F_n] = \mathbb E[\mathbf 1_{\{X_{n+1} = i\}} - p_{i}(n) \mid \mathcal F_n] = p_{i}(n) - p_{i}(n) = 0.$$
    Moreover, these martingales each have bounded differences $|M_i(n+1) - M_i(n)| \leq 1$, so the Azuma-Hoeffding inequality gives
    $$\P(|M_i(n)| \geq n\varepsilon) \leq 2 e^{-n \varepsilon^2/2}.$$

    This yields exponential concentration for $\mu_i(n) - \frac{1}{n} \sum_{m=0}^{n-1} p_i(m) = M_i(n)/n$ around $0$, so by the Borel-Cantelli theorem, the difference converges almost surely. And if the count process for a sequential sampling rule is irreducible and positive recurrent, then the ergodic theorem for irreducible positive-recurrent countable-state Markov chains, applied to the projection-centered count process, guarantees almost sure convergence of $\frac{1}{n} \sum_{m=0}^{n-1} p_i(m)$.
    
    For the expectation bound, first, a union bound gives
    $$\P \left(\max_{1 \leq i  \leq d} \left |\mu_{i}(n) - \frac{1}{n} \sum_{m=0}^{n-1} p_{i}(m) \right| \geq \varepsilon \right) \leq 2d e^{-n\varepsilon^2/2}.$$
    We conclude that
    \begin{align*}
        \mathbb E \left[\max_{1 \leq i  \leq d} \left |\mu_{i}(n) - \frac{1}{n} \sum_{m=0}^{n-1} p_{i}(m) \right|  \right] &= \int_0^\infty \P \left(\max_{1 \leq i  \leq d} \left |\mu_{i}(n) - \frac{1}{n} \sum_{m=0}^{n-1} p_{i}(m) \right| \geq \varepsilon \right) \, d\varepsilon \\
        &\leq \int_0^\infty 2d e^{-n\varepsilon^2/2} \, d\varepsilon \\
        &= d \sqrt{\frac{2\pi }{n}}.\qedhere
    \end{align*}
\end{proof}

\proofappendixheading{Proof of Theorem~\ref{thm:average_case_conv_rate}}
\proofappendixstatement{Theorem}{The empirical distribution $\mu_n$ converges at a rate of $1/n$:
    $$\E\| \mu(n) - p^* \|_{\operatorname{TV}} = O(1/n).$$
    More granularly, for $\max_i \beta_i \leq 1$,
    $$\mathbb E \| \mu(n) - p^*\|_{\operatorname{TV}} \lesssim \begin{cases}
        \frac{1}{n}\sum_i \frac{1}{\beta_i} & \text{if $p_0 \neq p^*$,} \\
        \frac{1}{n}\sum_i \frac{1}{\sqrt{\beta_i}} & \text{if $p_0 = p^*$.}
    \end{cases}$$
    where the constant depends only on $p(0)$, $p^*$, $\kappa := \frac{\max_i \beta_i}{\min_i \beta_i}$, and $d$.}

\noindent The key idea is that
    \[\E ||\mu_{n} - p^*||_{TV} = \frac{1}{2n} \sum_i |N_i(n) - np_i^*|| = \frac{||Y(n)||_1}{2n}\]
\noindent is a function of the moments of the mean-centered counts $Y(n)$.

To control these moments, we will develop a Foster-Lyapunov drift condition for some suitable potential. This requires a quantitative version of Theorem 1 of \cite{PemantleRosenthal1999} which will allow us to bound these moments in terms of the level of drift. This is the content of the following Lemma. Since the drift of $Y(n)$ is controlled by $\beta$, this uncovers the desired $\beta$ dependence.

\begin{lemma} \label{lem:drift_lemma}
    Let $U_n \geq 0$ be $\cF_n$-adapted for $n \geq 0$. Suppose there exists $a,b,c > 0$ with $a \leq c$ so
    \begin{enumerate}
        \item[(i)] $\E[U_{n+1} - U_n \mid \mathcal F_n] \leq - a$ on $\{U_n \geq b\}$,
        \item[(ii)] $|U_{n+1} - U_n| \leq c$ a.s.
    \end{enumerate}
    Then
    $$\sup_{n \geq 0} \E[U_n] \leq \E[U_0] + b + c + \frac{c^2}{2a}.$$
\end{lemma}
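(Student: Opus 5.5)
The plan is to treat the excursions of $U$ above the level $b$ like a reflected random walk with drift $-a$. The constant $c^2/(2a)$ is exactly Kingman's bound $\sigma^2/(2a)$ on the mean of a Lindley (reflected) walk with drift $-a$ and increments of size at most $c$. So I would set up a Lindley-type comparison and close it with a quadratic Lyapunov computation.

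First I localize above $b$. Set $R_n := (U_n - b)^+$ and $\xi_{n+1} := U_{n+1} - U_n$, so $|\xi_{n+1}| \leq c$ and $U_n \leq b + R_n$. On $\{U_n \geq b\}$ we have $R_{n+1} \leq (R_n + \xi_{n+1})^+$. On $\{U_n < b\}$ we have $R_{n+1} \leq c$, since $U_{n+1} < b + c$. This gives the pathwise domination
\[
R_{n+1} \leq \max\bigl\{(R_n + \xi_{n+1})^+,\; c\,\mathbf 1_{\{U_n<b\}}\bigr\}.
\]
The second term will produce the additive $c$ in the bound, and $\E[U_0]$ enters through $R_0 \leq U_0$.

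Next comes the quadratic drift computation. On $\{U_n \geq b\}$, squaring the Lindley inequality gives
\[
\E[R_{n+1}^2 - R_n^2 \mid \cF_n] \leq 2R_n\,\E[\xi_{n+1}\mid\cF_n] + c^2 \leq -2aR_n + c^2,
\]
using hypothesis (i), and on $\{U_n<b\}$ we have $R_n=0$ and $R_{n+1}^2 \leq c^2$. Hence $\E R_{n+1}^2 \leq \E R_n^2 - 2a\E R_n + c^2$ for every $n$. This is the Kingman estimate, and summing it already yields the Cesàro bound $\frac1n\sum_{k<n}\E R_k \leq \frac{c^2}{2a} + \frac{\E R_0^2}{2an}$.

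The main obstacle is upgrading this averaged bound to a bound on $\sup_n \E[U_n]$ with $\E[U_0]$ appearing linearly rather than through $\E[U_0^2]$. I would handle it by a last-exit decomposition at a fixed time $n$. Let $\sigma_n := \max\{m \leq n : U_m < b\}$, with the convention that $\sigma_n = -1$ if no such $m$ exists. Then split $\E[U_n]$ according to the value of $\sigma_n$:
\begin{itemize}
\item On $\{\sigma_n = m \geq 0\}$, the walk restarts from height at most $b + c$ at time $m+1$ and stays at least $b$ through time $n$.
\item On $\{\sigma_n = -1\}$, the walk stays at least $b$ throughout and starts from $U_0$.
\end{itemize}
During such an excursion, $U_k + a k$ is a supermartingale with increments bounded by $c$.

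In each case I would compare the excursion with a reflected walk started at the excursion's entrance value. Applying the quadratic identity above to this comparison walk started from $0$ (which is the monotone, Kingman-stationary case) bounds the expected excursion height above its entrance by $c^2/(2a)$ uniformly in its length. The entrance value contributes at most $b + c$ in the first case and $\E[U_0]$ in the second. Summing the two cases gives $\sup_n \E U_n \leq \E U_0 + b + c + \frac{c^2}{2a}$.

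If making the last-exit conditioning rigorous is delicate, because $\{\sigma_n = m\}$ depends on the future, I would fall back on a different route. Run the recursion for $R_n$ started from $R_0$, use $(R_0 + x)^+ \leq R_0 + x^+$ so that the initial value separates additively, and prove by induction on $n$ that $\E R_n \leq \E R_0 + c + c^2/(2a)$. The induction step would use the drift inequality together with the monotonicity of the Lindley map in its starting point.
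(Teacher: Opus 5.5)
Your localization $R_n=(U_n-b)^+$ and the quadratic computation $\E R_{n+1}^2\le \E R_n^2-2a\,\E R_n+c^2$ are correct. As you say, though, they only give a Ces\`aro bound, and only when $\E U_0^2<\infty$. The upgrade to $\sup_n \E U_n$ is a genuine gap, and neither of your routes closes it.

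The Kingman/Loynes facts you want to use (a reflected walk started from $0$ is stochastically increasing in time, with stationary mean at most $\E\xi^2/(2a)$) hold for walks driven by i.i.d.\ or stationary increments. Your increments $\xi_{n+1}=U_{n+1}-U_n$ are only adapted: their conditional law depends on the whole past, and the drift bound holds only on $\{U_n\ge b\}$. ``Monotonicity of the Lindley map in its starting point'' is a pathwise statement for a \emph{fixed} driving sequence, so it does not let you replace these increments by i.i.d.\ ones.

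Concretely, in the last-exit route the case $\sigma_n=-1$ is fine: apply optional stopping to the nonnegative supermartingale $U_{n\wedge\tau}$, where $\tau:=\inf\{k:U_k<b\}$. For $\sigma_n=m\ge 0$, however, you need $\E[(U_n-b)\mathbf 1_{\{\sigma_n=m\}}]\lesssim (c+c^2/(2a))\,\P(\sigma_n=m)$. That is a statement about an excursion \emph{conditioned to survive} to time $n$, and the supermartingale property does not give it. If you instead sum the unconditional bounds $\E[U_n\mathbf 1_{\{\sigma_n=m\}}]\le \E[U_{m+1}\mathbf 1_{\{U_m<b\le U_{m+1}\}}]$ (for $m<n$) over $m$, you get $(b+c)$ times the expected number of upcrossings of $b$, which can grow linearly in $n$.

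In the fallback route, an induction whose hypothesis is only $\E R_n\le K$ cannot close. The one-step hypotheses allow $R_n=0$ with probability $1-\epsilon$ and $R_n=K/\epsilon$ with probability $\epsilon$. They then only give $\E R_{n+1}\le K+(1-\epsilon)c-\epsilon a$, which exceeds $K$ for small $\epsilon$. The first moment is not propagated by the dynamics.

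The missing idea, which is the core of the paper's proof, is to propagate a whole class of test functions. You do this by comparing, in the increasing convex order, with the extremal two-point chain. Set $X_n=(U_n-b)_+$, $\theta=(c-a)/(2c)$ and $TF(x)=\theta F(x+c)+(1-\theta)F(\max\{c,x-c\})$. One shows $\E[F(X_{n+1})\mid\cF_n]\le TF(X_n)$ for every increasing convex $F$:
\begin{itemize}
\item on $\{X_n=0\}$ this is just $X_{n+1}\le c$;
\item on $\{X_n>0\}$, apply the chord inequality $G(z)\le\frac{c+z}{2c}G(c)+\frac{c-z}{2c}G(-c)$ for $|z|\le c$ to $G(z)=F(\max\{c,X_n+z\})$, and use (i) together with $G(c)\ge G(-c)$.
\end{itemize}
Since $T$ preserves increasing convex functions, iterating with $F(x)=x$ gives $\E X_n\le \E[T^nF(X_0)]\le \E Q_n$. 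Here $Q$ is a genuine time-homogeneous birth--death chain on $\{c,2c,3c,\dots\}$ with steps $\pm c$ (probabilities $\theta,1-\theta$), reflected at $c$ and started at $\max\{c,X_0\}$.

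Only at this point does your stationarity idea apply. $Q$ has a geometric stationary law with mean $c/2+c^2/(2a)$. A monotone coupling with common increments then gives $\E Q_n\le \E\max\{c,X_0\}-c/2+c^2/(2a)$, and the stated bound follows.
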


\begin{proof}[Proof of Lemma~\ref{lem:drift_lemma}]
    The idea is to compare (a shifted version of) $U_n$ to the ``worst case'' birth-death process with properties (i) and (ii), which will satisfy the desired bound. To this end, define the shifted version $X_n := (U_n - b)_+$ and the following birth-death process which makes jumps of size $c$:
        $$Q_0 := \max \{ c,X_0\}, \qquad Q_{n+1} := \max \{ c, Q_n + Z_{n+1}\},$$
    where $Z_1,Z_2,\dots$ are iid with
        $$\P(Z_n = c) = \theta, \qquad \P(Z_n = -c) = 1-\theta, \qquad \theta := \frac{c-a}{2c} \in [0,1].$$
    \noindent Note $Z_i$ is chosen so $\E Z_i = -a$ and $|Z_i| \leq c$, so $(i)$ and $(ii)$ are satisfied by $Q_n$. Moreover, $Z_i$ essentially maximizes the variance among all such random variables. Since the downside of $Q_n$ is limited to $c$, this choice of increment in a sense maximizes the expectation of $Q_n$.

    To compare $X_n$ and $Q_n$, we claim for any increasing, convex $F$
    \begin{equation}
        \E[F(X_{n+1}) \mid \mathcal F_n] \leq \theta F(X_n + c) + (1-\theta)  F(\max \{c, X_n - c \}) = \E[F(Q_{n+1}) \mid \mathcal F_n].
        \label{claim:operator_T_increasing_convex_order}
    \end{equation}
    \noindent In words, this says $(Q_{n+1} \mid \cF_n)$ stochastically dominates $(X_{n+1} \mid \cF_n)$ in the order $\leq_{icx}$ induced by increasing, convex functions. Define an operator $T$ by
        \[(TF)(x) := \theta F(x + c) + (1-\theta)F(\max \{ c,x-c\}),\]
    \noindent so that $TF(Q_n) = \E[F(Q_{n+1}) \mid \mathcal F_n]$.

    To prove the claim, first note that if $X_n = 0$, then the inequality follows from (ii) as
    $$F(X_{n+1})  = F((U_{n+1} - b)_+)\leq F(c).$$
    When $X_n > 0$, let $G(z) := F(\max \{ c,X_n +z\})$. Then $G$ is increasing and convex, which gives
    $$G(z) \leq \frac{c+z}{2c}G(c) + \frac{c-z}{2c} G(-c), \qquad \forall |z| \leq c.$$
    In particular, we know that $|U_{n+1} - U_n| \leq c$ almost surely, so applying this inequality and taking conditional expectations gives
    \begin{align*}
        \E[G(U_{n+1} - U_n) \mid \mathcal F_n] &\leq \frac{c + \E[U_{n+1} - U_n \mid \mathcal F_n]}{2c}(G(c) - G(-c)) + G(-c).
        \intertext{Since $G$ is increasing, $G(c) - G(-c) \geq 0$. We also know that $\E[U_{n+1} - U_n \mid \mathcal F_n] \leq -a$, so}
        &\leq \theta (G(c) - G(-c)) + G(-c) \\
        &= \theta G(c) + (1-\theta) G(-c) \\
        &= \theta F(X_n + c) + (1-\theta) F(\max \{ c,X_n - c\}).
    \end{align*}
    Now, since $X_n > 0$, we have $X_n = U_n -b$, so that
    $$G(U_{n+1} - U_n) = F( \max \{ c, X_n + U_{n+1} - U_n\}) = F( \max \{ c, U_{n+1} - b\}) \geq F(X_{n+1})$$
    by the monotonicity of $F$; this proves claim~\eqref{claim:operator_T_increasing_convex_order}.

     Taking expectations of both sides in the claim inequality gives $\E[F(X_{n+1})] \leq \E[TF(X_n)]$. Since $T$ maps the set of increasing convex functions onto itself, we can apply claim~\eqref{claim:operator_T_increasing_convex_order} iteratively starting with $F(x) = x$ to get
    \begin{equation*}
        \E[X_n] = \E[F(X_n)] \leq \E[TF(X_{n-1})] \leq ... \leq \E[T^nF(X_0)].
    \end{equation*}
    \noindent Since $T^nF$ is increasing and $X_0 \leq Q_0$ we see
        \[\leq \E T^n F(Q_0) = \E Q_n.\]
    It now only remains to bound $\sup_n \E[Q_n]$. If we couple $Q_n$ with $Q_n'$, a version started at $Q_0' = c$ but with the same increments, then we get
    $$\E[Q_n] \leq \E[Q_n'] + \E[Q_0 - c].$$
    The stationary distribution of the $Q_n'$ chain is
    $$\pi(kc) := (1-r)r^{k-1}, \qquad r := \frac{\theta}{1-\theta} = \frac{c-a}{c+a},$$
    and the mean of the chain at stationarity is therefore
    \begin{equation*}
        \sum_{k=1}^\infty ck(1-r)r^{k-1} = \frac{c}{1-r} = \frac{c}{2} + \frac{c^2}{2a}.
    \end{equation*}
    Since $Q_0'$ is less than a version of the chain started at stationarity and coupled to have the same increments, we get
    $$\E[Q_n'] \leq \frac{c}{2} + \frac{c^2}{2a}.$$
    So, in total, we get
    \begin{align*}
        \E[Q_n] &\leq \E[Q_n'] + \E[Q_0 - c] \leq \E[Q_0] - \frac{c}{2} + \frac{c^2}{2a} \leq \E[Q_0] + \frac{c^2}{2a}.
    \end{align*}
    Unwrapping the definitions gives the final bound
    $$\E[U_n] \leq b + \E[(U_n - b)_+] \leq b + \E[\max\{c,U_0-b\}] + \frac{c^2}{2a} \leq \E[U_0] + b + c + \frac{c^2}{2a},$$
    uniformly in $n$.
\end{proof}

\begin{proof}[Proof of Theorem \ref{thm:average_case_conv_rate}]
    We treat the general case by essentially applying shifts to all of the counts to compensate for the ``wrong'' initial distribution. In particular, let
        $$r_i := \log \frac{p_i(0)}{p_i^*}, \qquad \overline r := \sum_j p_j^* r_j, \qquad c_i := r_i - \overline r, \qquad \xi_i := \beta_i Y_i - c_i.$$
    Then
        $$\| Y(n) \|_1 = \sum_i \left| \frac{c_i + \xi_i(n)}{\beta_i}\right| \leq \sum_i \frac{|c_i|}{\beta_i} + \sum_i \frac{|\xi_i(n)|}{\beta_i}.$$
    When $p_i(0) = p_i^*$, the $c_i$ terms disappear, so it suffices to show that $\E[|\xi_i(n)|] \lesssim \sqrt{\beta_i}$ when $p(0) = p^*$ and $\lesssim 1$ otherwise.

    To handle all the $\xi_i(n)$ at once, consider $V_n := \sum_i \frac{\xi_i^2}{\beta_i}$; we will control $\E[V_n]$ via a drift bound. If $X_{n+1} = k$, then
    $$\Delta \xi_i(n) := \xi_i(n+1) - \xi_i(n) = \beta_i( \mathbf 1_{\{i = k\}} - p_i^*),$$
    so we may compute (keeping in mind that $\sum_i p_i^* \xi_i(n) = 0$ for all $n$)
        \[V_{n+1} - V_n =\sum_{i} \frac{\Delta \xi_i(n) \left(\xi_i(n+1) + \xi_i(n)\right)}{\beta_i} = 2\xi_{X_{n+1}}(n) + \beta_{X_{n+1}}(1-p_{X_{n+1}}^*).\]
    \noindent and hence
    \begin{align*}
        \E[V_{n+1} - V_n \mid N(n)] &= \sum_{i} p_i(n) \left[2\xi_i(n) + \beta_i(1-p_i^*)\right]\\[5pt]
        & \leq 2\sum_i  \xi_i(n) p_i(n) + \sum_i \beta_i p_i(n)\\[5pt]
        & \leq 2\sum_i \xi_i(n) (p_i(n) - p_i^*) + \max_i \beta_i.
    \end{align*}

    To upper bound the first term in terms of (negative) $V_n$, recognize that it is an expectation with respect to a $p_i^*$, tilted in the direction of $-\xi$; in particular, $p_i(n) = \frac{p_i^* e^{-\xi_i(n)}}{\sum_j p_j^* e^{-\xi_j(n)}}$, and the term is $\E[\Xi]$, where $\Xi = \xi_i$ with probability $p_i(n)$. The more we tilt the distribution in the direction of $-\xi$, the lower the expectation of $\Xi$ will be. To make this intuition precise, define the following interpolation for $0 \leq t \leq 1$:
        $$Z(t) := \sum_i p_i^* e^{-t \xi_i}, \qquad q_i(t) := \frac{p_i^* e^{-t \xi_i}}{Z(t)}, \qquad f(t) := \log Z(t).$$
    Then $q(0) = p^*$, $q(1) = p(n)$, and if $\Xi(t) = \xi_i$ with probability $q(t)$ we have
        $$f'(t) = - \sum_i \xi_i q_i(t) = -\E[\Xi(t)], \qquad f''(t) = \Var(\Xi(t)) \geq 0.$$
    So we can calculate the desired expectation by tracking this interpolation:
        $$\sum_i \xi_i (p_i(n)-p_i^*) = - (f'(1) - f'(0)) = -\int_0^1 f''(t) dt \leq -\int_0^{\min\{1,1/\|\xi(n)\|_\infty\}} f''(t) \, dt.$$
    For $0 \leq t \leq \min\{1,1/\|\xi(n)\|_\infty\}$, $p_i^*$ and $q_i(t)$ are readily comparable:
        $$\frac{1}{e} \leq Z(t) \leq e,$$
    which gives
        $$\frac{p_i^*}{e^2} \leq q_i(t) \leq e^2 p_i^*.$$
     For such $t$, by the variational characterization of variance
    \begin{align*}
        f''(t) &= \min_a \sum_i q_i(t) (\xi_i(n)- a)^2 \\
        &\geq \frac{1}{e^2} \min_a \sum_i p_i^* (\xi_i(n)-a)^2 \\
        &= \frac{1}{e^2} \Var(\Xi(0)) \\
        &= \frac{1}{e^2} \sum_i \xi_i^2 p_i^* \\
        &= \frac{1}{e^2} S_\beta^{-1} V_n.
    \end{align*}
    for $S_\beta = \sum \beta_i^{-1}$. Putting it all together, we get a bound of
    \begin{align*}
        \E[V_{n+1} - V_n \mid N(n)] &\leq -\frac{2}{e^2} \min \{ 1, 1/\|\xi(n)\|_\infty\}S_\beta^{-1} V_n +  \max_i \beta_i.
        \intertext{We know that $\|\xi(n)\|_\infty \leq \sqrt{V_n\max_i \beta_i}$, so this is}
        &\leq -\frac{2}{e^2} S_\beta^{-1} \min \left\{V_n, \sqrt {\frac{V_n}{\max_i \beta_i} } \right\} +  \max_i \beta_i.
        \intertext{We have $S_\beta = \sum_j \beta_j^{-1} \leq \frac{d}{ \min_i \beta_i} = d \kappa \frac{1}{\max_i \beta_i} $, so that for $\max_i \beta_i \leq 1$,}
        &\leq -\frac{2}{e^2d \kappa} (\max_i \beta_i) \min \{V_n, \sqrt {V_n} \} + \max_i \beta_i.
    \end{align*}
    Since $\sqrt V_n \leq V_n$ for $V_n \geq 1$, we will actually establish our drift bound in terms of $U_n := \sqrt{1+V_n}$. Notice that by the tangent-line inequality for the concave function $x \mapsto \sqrt{1+x}$,
    \begin{align*}
        \E[U_{n+1} - U_n \mid N(n)] &\leq \frac{\E[V_{n+1} - V_n \mid N(n)]}{2 \sqrt{1+V_n}} \\
        &\leq \frac{-\frac{2}{e^2d \kappa} (\max_i \beta_i) \min \{V_n, \sqrt {V_n} \} + \max_i \beta_i}{2\sqrt{1+ V_n}}.
        \intertext{For $U_n \geq \sqrt{1 + e^4d^2 \kappa^2}$, we have $V_n \geq 1$ and $\frac{2}{e^2d \kappa} \sqrt V_n \geq 2$, so we get}
        &\leq - \frac{\max_i \beta_i}{2\sqrt{2}e^2d \kappa}.
    \end{align*}
    We also need a bound on the jump size for $U_n$: The map $x \mapsto \sqrt{1+\|x\|_\beta^2}$ is 1-Lipschitz with respect to the norm $\|x\|_\beta := \sqrt{\sum_i x_i^2/\beta_i}$, so again as $\sum_i \xi_i p_i^* = 0$ we get
    \begin{align*}
        |U_{n+1} - U_n| &\leq \sqrt{\sum_i \frac{(\xi_i(n+1) - \xi_i(n))^2}{\beta_i}} \\
        &= \sqrt{\sum_i (\mathbf 1_{\{X_{n+1} = i\}} - p_i^*) \Delta \xi_i(n)} \\
        &\leq \sqrt{ \max_i \beta_i}.
    \end{align*}
    \noindent Applying Lemma~\ref{lem:drift_lemma} with
        \[a=\frac{\max_i\beta_i}{2\sqrt{2}e^2d\kappa}, \qquad b=\sqrt{1+e^4d^2\kappa^2},\qquad c=\sqrt{\max_i\beta_i},\]
    \noindent and using $\xi_i(0) = -c_i$, we obtain
    \begin{align*}
        \sup_n \E[\sqrt{V_n}]
        &\leq \sup_n \E[U_n] \\
        &\leq \E[U_0]
            +\sqrt{1+e^4d^2\kappa^2}
            +\sqrt{\max_i\beta_i}
            +\sqrt{2}e^2d\kappa \\[5pt]
        &=
        \sqrt{1+\sum_j\frac{c_j^2}{\beta_j}}
            +\sqrt{1+e^4d^2\kappa^2}
            +\sqrt{\max_i\beta_i}
            +\sqrt{2}e^2d\kappa \\[5pt]
        &\leq
        \sqrt{1+\frac{\kappa}{\max_i\beta_i}
            \sum_j c_j^2}
            +\sqrt{1+e^4d^2\kappa^2}
            +1+\sqrt{2}e^2d\kappa \\
        &\lesssim
        \begin{cases}
            1,&\text{if }p(0)=p^*,\\[3pt]
            1/\sqrt{\max_i\beta_i},&\text{if }p(0)\neq p^*.
        \end{cases}
    \end{align*}
    where we have absorbed constants that depend only on $p(0),p^*,\kappa$, and $d$. Note, in particular, that the specific form of the constants in Lemma~\ref{lem:drift_lemma} makes the $\max_i \beta_i$ dependence cancel in the last term.
    
    We finish by applying this bound to $\xi_i$:
    \begin{align*}
        \E[|\xi_i(n)|] &= \sqrt{\beta_i} \E \left[ \frac{|\xi_i|}{\sqrt{\beta_i}} \right] \leq \sqrt{\beta_i} \E[\sqrt{V_i}] \lesssim \begin{cases}
        \sqrt{\beta_i} & \text{if $p(0) = p^*$} \\
        1 & \text{if $p(0) \neq p^*$}
        \end{cases}
    \end{align*}
    for every $i$.
\end{proof}

\proofappendixheading{Proof of Theorem~\ref{thm:time-dep-conv-rate} (Convergence Rates for Time-Inhomogeneous Biasing)}
\proofappendixstatement{Theorem}{If $\beta_i(n) = a(n)b_i$ for all $i$, where $a(n) \downarrow 0$ as $n \to \infty$ and $\sup_n a(n+1)^{-1} - a(n)^{-1} < \infty$, then
    $$\mathbb E \| \mu(n) - p^*\|_{\operatorname{TV}} \lesssim \begin{cases}
        \frac{1}{n} \sum_i \frac{1}{a(n)} & \text{if $p(0) \neq p^*$,} \\
        \frac{1}{n} \sum_i \frac{1}{\sqrt{a(n)}} & \text{if $p(0) = p^*$,}
    \end{cases}$$
    where the constant depends only on $p(0),d, \sup_n a(n+1)^{-1} - a(n)^{-1}$, and the $b_i$.}

\begin{lemma}[Tilting bound] \label{lem:tilting-bound}
    Let $q_i > 0$ with $\sum_i q_i = 1$. Then there exists a constant $c = c(q) > 0$ such that for every $x \in \R^d$ with $\sum_i q_i x_i = 0$,
    $$\frac{\sum_i q_i e^{-3tx_i/4}}{\sum_j q_j e^{-tx_j}} \leq e^{-c \min \{ t^2\sum_i q_i x_i^2,1 \}}$$
    for all $t > 0$.
\end{lemma}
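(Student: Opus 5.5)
The plan is to write the ratio as $\exp\bigl(f(3t/4)-f(t)\bigr)$, where $f(s):=\log Z(s)$ and $Z(s):=\sum_i q_i e^{-s x_i}$. We then show $f(t)-f(3t/4)\geq c\min\{t^2\sigma^2,1\}$, with $\sigma^2:=\sum_i q_i x_i^2$. This is the same exponential-tilting interpolation used in the proof of Theorem~\ref{thm:average_case_conv_rate}. Let $q_i(s):=q_i e^{-s x_i}/Z(s)$. Then
\[
f'(s)=-\sum_i x_i q_i(s), \qquad f''(s)=\Var_{q(s)}(x)\geq 0,
\]
and the centering $\sum_i q_i x_i=0$ gives $f'(0)=0$. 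So $f$ is convex with $f'\geq 0$ on $[0,\infty)$, and $f'$ is nondecreasing there. Hence
\[
f(t)-f(3t/4)=\int_{3t/4}^{t} f'(s)\,ds \;\geq\; \frac t4\, f'(3t/4) \;=\; \frac t4\int_0^{3t/4} f''(u)\,du .
\]

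Next we lower-bound $f''(u)$ for small $u$, exactly as in the earlier proof. For $0\leq u\leq 1/\|x\|_\infty$ we have $e^{-1}\leq Z(u)\leq e$, and therefore $q_i(u)\geq e^{-2}q_i$. The variational formula for the variance, together with the centering of $x$ under $q$, then gives
\[
f''(u)=\min_a\sum_i q_i(u)(x_i-a)^2\;\geq\; e^{-2}\min_a \sum_i q_i (x_i-a)^2 \;=\; e^{-2}\sigma^2 .
\]
Integrating only over $u\leq \min\{3t/4,1/\|x\|_\infty\}$ and using $f''\geq 0$ elsewhere yields
\[
f(t)-f(3t/4)\;\geq\; \frac{t\sigma^2}{4e^2}\min\Bigl\{\tfrac{3t}{4},\,\tfrac{1}{\|x\|_\infty}\Bigr\}.
\]

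The remaining step, which is the only delicate one, converts the $\|x\|_\infty$ branch into the form $\min\{t^2\sigma^2,1\}$. Here we use the comparison $\|x\|_\infty\leq \sigma/\sqrt{q_{\min}}$ with $q_{\min}:=\min_i q_i$, which holds because $q_{\min}x_i^2\leq\sigma^2$ for each $i$. There are two cases.
\begin{itemize}
\item If $t\|x\|_\infty\leq 1$, the minimum is $3t/4$, and the bound is $\frac{3}{16e^2}t^2\sigma^2$.
\item Otherwise the bound is at least $\frac{t\sigma^2}{4e^2\|x\|_\infty}\geq \frac{\sqrt{q_{\min}}}{4e^2}\,t\sigma$. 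Since $t\sigma\geq\min\{t\sigma,1\}\geq \min\{t^2\sigma^2,1\}$, this is at least a constant multiple of $\min\{t^2\sigma^2,1\}$.
\end{itemize}
Taking $c:=e^{-2}\min\{3/16,\sqrt{q_{\min}}/4\}$, which depends only on $q$, and exponentiating gives the claim for all $t>0$.
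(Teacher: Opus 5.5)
Your proof is correct, and it differs from the paper's in the one step that matters.

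\textbf{What is shared.} Both arguments write the ratio as $e^{f(3t/4)-f(t)}$. Both use convexity of $f$ with $f'(0)=0$ to handle large $t$. Your inequality $f(t)-f(3t/4)\ge \frac t4 f'(3t/4)$ is the integrated form of the paper's $g'(u)\ge \frac14 f'(3u/4)$.

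\textbf{What differs.} The difference is the small-$t$ regime.
\begin{itemize}
\item The paper normalizes $y=x/\sigma$ and uses compactness of $K_q=\{y:\sum_i q_iy_i=0,\ \sum_i q_iy_i^2=1\}$ to get uniformly bounded third derivatives. This gives $f(u)-f(3u/4)=\frac{7}{32}u^2+O(u^3)$, and hence non-explicit constants $\delta, c_0$.
\item You instead reuse the bounded-tilt variance comparison from the proof of Theorem~\ref{thm:average_case_conv_rate}, obtaining $f''(u)\ge e^{-2}\sigma^2$ on $[0,1/\|x\|_\infty]$. You combine this with $\|x\|_\infty\le\sigma/\sqrt{q_{\min}}$, which is the same fact behind the paper's compactness of $K_q$.
\end{itemize}

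\textbf{What each buys.} Your route gives an explicit constant $c=e^{-2}\min\{3/16,\sqrt{q_{\min}}/4\}$. It depends on $q$ only through $q_{\min}$, and there is no Taylor remainder to control. The paper's route gives the sharp local coefficient $7/32$ as $t\sigma\to 0$, at the cost of an inexplicit $c$.

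\textbf{Two cosmetic fixes.}
\begin{itemize}
\item Split the cases at $t\|x\|_\infty\le 4/3$ rather than $t\|x\|_\infty\le 1$. For $1<t\|x\|_\infty<4/3$ the minimum is still $3t/4$, so your claim that the bound is at least $\frac{t\sigma^2}{4e^2\|x\|_\infty}$ is off by up to a factor $3/4$ there. Your conclusion still holds, because the first-case estimate $\frac{3}{16e^2}t^2\sigma^2$ covers that window.
\item Set aside $x=0$ explicitly. There $1/\|x\|_\infty$ is undefined, and the inequality holds with equality.
\end{itemize}
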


\begin{proof}[Proof of Lemma~\ref{lem:tilting-bound}]
    Denote $\sigma^2 := \sum_i q_i x_i^2$, and let $y_i := x_i/\sigma$; we may rule out the case $\sigma = 0$ since the desired inequality holds with equality when all $x_i = 0$. Then $\sum_i q_i y_i = 0$ and $\sum_i q_i y_i^2 = 1$, and we want to show that
    $$\log \sum_i q_i e^{-t \sigma y_i} - \log \sum_i q_i e^{-3t \sigma y_i/4} \geq c \min \left \{ t^2 \sigma^2 , 1\right \}.$$
    Define $f(u) := \log \sum_i q_i e^{-u y_i}$, so we wish to show that
    $$f(u) - f(3u/4) \geq c \min \{ u^2,1\}.$$
    Observe that
    $$f(0) = 0, \qquad f'(0) = -\sum_i q_i y_i = 0, \qquad f''(0) = \sum_i q_i y_i^2 - \left(\sum_i q_i y_i \right)^2 = 1,$$
    and consider the set
    $$K_q:= \left\{ y \in \R^d: \sum_i q_i y_i = 0, \sum_i q_i y_i^2 = 1 \right \}.$$
    This set is compact: it is closed, and for every $y \in K_q$, $q_i y_i^2 \leq \sum_j q_j y_j^2 = 1$, so $|y_i| \leq q_i^{-1/2}$ for every $i$.

    Moreover, the third derivatives of $f$ are uniformly bounded for $y \in K_q$ and $u$ in a fixed neighborhood of zero. Therefore, uniformly over $y \in K_q$, $f(u)=\frac{u^2}{2}+O(u^3)$. Consequently,
    \begin{align*}
        f(u) - f(3u/4) &= \frac{u^2}{2} -\frac12\left(\frac{3u}{4}\right)^2 +O(u^3) = \frac{7}{32} u^2 + O(u^3).
        \end{align*}
    Hence, there exist constants $\delta \in (0,1]$ and $c_0 > 0$, depending only on $q$, such that
    $$f(u) - f(3u/4) \geq c_0 u^2$$
    for every $y \in K_q$ and every $u \in [0,\delta]$.

    To deal with larger values of $u$, we show that $g(u) := f(u) - f(3u/4)$ is increasing. Observe that $f$ is strictly convex with $f'(0) = 0$, so we have $f'(u) > 0$ for all $u > 0$. Then
    $$g'(u) = f'(u) - \frac{3}{4} f'(3u/4) \geq f'(3u/4) - \frac{3}{4} f'(3u/4) = \frac{1}{4} f'(3u/4) \geq 0.$$
    Thus $g$ is increasing on $[0,\infty)$. Set $c:=c_0 \delta^2$. If $0 \leq u \leq \delta$, then
    $$g(u)\geq c_0u^2\geq cu^2.$$
    If $\delta \leq u \leq 1$, then monotonicity gives
    $$g(u) \geq g(\delta) \geq c_0 \delta^2=c \geq cu^2.$$
    Finally, if $u\geq1$, then
    $$g(u)\geq g(\delta)\geq c
=c\min\{u^2,1\}.$$
Therefore, $g(u)\geq c\min\{u^2,1\}$ for every $u \geq 0$. Substituting $u = t \sigma$ and exponentiating completes the proof.
\end{proof}

\begin{proof}[Proof of Theorem~\ref{thm:time-dep-conv-rate}]
    First we treat the case where $p(0) \neq p^*$. The idea is to show that all the odds ratios between outcomes are bounded in expectation. In particular, if we let $\mathcal O_{i,j}(n) := \frac{p_j(n)}{p_i(n)} = \frac{p_j(0)}{p_i(0)} e^{a(n)(b_iN_i(n) - b_jN_j(n))}$ and $V_n := \sum_{i,j} \mathcal O_{i,j}(n)$, then we claim that $\sup_n \E[V_n] < \infty$.

    Given this claim, we can control the weighted differences between counts because
    \begin{align*}
        V_n &\geq \max_{i,j} \mathcal O_{i,j}(n) \\
        &\geq \frac{\min_j p_j(0)}{\max_i p_i(0)} e^{a(n)(\max_{i,j} b_iN_i(n) - b_j N_j(n))},
    \end{align*}
    which we can rearrange as
    $$\max_{i,j} b_iN_i(n) - b_j N_j(n) \leq \frac{1}{a(n)} \log \left( \frac{\max_j p_j(0)}{\min_i p_i(0)} V_n \right).$$
    This directly bounds the total variation distance, as
    \begin{align*}
        \| \mu(n) - p^* \|_{\operatorname{TV}} &= \frac{1}{2n} \sum_i | N_i(n) - np_i^*| \\
        &= \frac{1}{2n} \sum_i \frac{1}{b_i} \left| b_i N_i(n) - \frac{n}{\sum_j 1/b_j} \right|.
        \shortintertext{Notice that $n/\sum_j 1/b_j = \sum_i p_i^* (b_i N_i(n))$ is an average of the $b_i N_i$, so these distances are upper bounded by the diameter of the set of $b_i N_i$.}
        &\leq \frac{1}{2n} \left( \sum_i \frac{1}{b_i} \right) \max_{i,j} b_i N_i(n) - b_j N_j(n) \\
        &\leq \frac{1}{2na(n)} \left( \sum_i \frac{1}{b_i} \right) \left( \log \frac{\max_j p_j(0)}{\min_i p_i(0)}  + \log V_n \right).
    \end{align*}
    Taking expectations and applying Jensen's inequality, we get
    $$\mathbb E\|\mu(n) - p^*\|_{\operatorname{TV}} \leq \frac{1}{2na(n)} \left( \sum_i \frac{1}{b_i} \right) \left( \log \frac{\max_j p_j(0)}{\min_i p_i(0)}  + \log \mathbb E[V_n] \right).$$

    To prove the claim, we prove a recursive drift bound. To compare the odds ratios at times $n$ and $n+1$, we compare both to an intermediate version where we pretend that $a(n+1) = a(n)$:
    \begin{align*}
        &\E[\mathcal O_{i,j}(n+1) - \mathcal O_{i,j}(n) \mid N(n)] \\
        &\qquad = \E \left [\mathcal O_{i,j}(n+1) - \frac{p_j(0)}{p_i(0)} e^{a(n)(b_iN_i(n+1) - b_j N_j(n+1))} \mid N(n) \right] \\
        &\qquad \qquad + \E \left [\frac{p_j(0)}{p_i(0)} e^{a(n)(b_iN_i(n+1) - b_j N_j(n+1))} - \mathcal O_{i,j}(n) \mid N(n) \right].
        \intertext{We can bound the former expectation in two ways. First, when $b_i N_i(n+1) \geq b_j N_j(n+1)$, $e^{a(n+1)(b_i N_i(n+1) - b_j N_j(n+1))} \leq e^{a(n)(b_i N_i(n+1) - b_j N_j(n+1))}$, meaning we can bound those terms by 0. On the other hand, when $b_i N_i(n+1) < b_j N_j(n+1)$, we can use the mean value theorem in the form $e^{-a(n+1)c} - e^{-a(n)c} \leq \sup_{a(n+1) \leq x \leq a(n) } (a(n) - a(n+1)) ce^{-c x} \leq \frac{a(n) - a(n+1)}{e a(n+1)}$ for $c > 0$. The latter expectation can be evaluated as usual.}
        &\leq \frac{p_j(0)}{p_i(0)} \frac{a(n)-a(n+1)}{a(n+1)} + p_i(n) \mathcal O_{i,j}(n) (e^{a(n)b_i} - 1) + p_j(n) \mathcal O_{i,j}(n) (e^{-a(n)b_j} - 1) \\
        &= \frac{p_j(0)}{p_i(0)} a(n) (a(n+1)^{-1} - a(n)^{-1}) + p_j(n)(e^{a(n)b_i} - 1) + p_j(n) \mathcal O_{i,j}(n) (e^{-a(n)b_j} - 1).
    \end{align*}
    So for all sufficiently large $n$, there exist constants $C,C',C'' >0$, depending only on the $p_i(0), b_i$, and $\sup_n a(n+1)^{-1} - a(n)^{-1}$ for all $i$, such that
    $$\E[\mathcal O_{i,j}(n+1) - \mathcal O_{i,j}(n) \mid N(n)] \leq C a(n) + C' p_j(n) a(n) - C'' p_{j}(n) \mathcal O_{i,j}(n) a(n).$$
    Summing over all $i,j$, we get
    \begin{align*}
        \E[V_{n+1} - V_n \mid N(n)] &\leq C d^2 a(n) + C' d a(n) - C'' a(n) \sum_j p_j(n)^2  \sum_i \frac{1}{p_i(n)} \\
        &= C d^2 a(n) + C' d a(n) - C'' a(n) V_n \sum_j p_j(n)^2.
        \shortintertext{Using Cauchy-Schwarz, we get}
        &\leq C d^2 a(n) + C' d a(n) - \frac{C''}{d} a(n) V_n.
    \end{align*}
    Taking expectations of both sides, we get that
    $$\E[V_{n+1}] \leq (Cd^2 + C'd) a(n) + \left( 1 - \frac{C''}{d}a(n) \right) \E[V_n].$$
    Therefore, $\sup_n \E[V_{n}] < \infty$, as claimed.

    Now, we treat the case where $p(0) = p^*$. We begin similarly to before but now apply Cauchy-Schwarz:
    \begin{align*}
        \E \| \mu(n) - p^* \|_{\operatorname{TV}} &= \frac{1}{2n} \sum_i \frac{1}{b_i} \left| b_i N_i - \frac{n}{\sum_j 1/b_j} \right| \\
        &\leq \frac{1}{2n} \sqrt{\sum_i \frac{1}{b_i}} \sqrt{\sum_i \frac{1}{b_i} \left( b_i N_i - \frac{n}{\sum_j 1/b_j} \right)^2}.
        \shortintertext{Denote $Q_n := \sum_i \frac{1}{b_i} ( b_i N_i - \frac{n}{\sum_j 1/b_j})^2$. We can bound $Q_n$ by looking at an exponential of it: $a(n) Q_n/4 \leq e^{a(n) Q_n/4}$.}
        &\leq \frac{1}{n \sqrt{a(n)}} \sqrt{\sum_i \frac{1}{b_i}} e^{a(n) Q_n/8}.
    \end{align*}
    So it only remains to show that for $W_n := e^{a(n) Q_n/8}$, $\sup_n \E[W_n] < \infty$. We will do so via another drift argument.

    The first step is to show that if outcome $k$ is chosen, then
    \begin{align*}
        Q_{n+1} - Q_n &= \frac{1}{b_k} \left( \left( b_k(N_k(n)+1) - \frac{n+1}{\sum_j 1/b_j} \right)^2 - \left( b_k N_k(n) - \frac{n}{\sum_j 1/b_j} \right)^2 \right) \\
        & \qquad +\sum_{i \neq k} \frac{1}{b_i} \left( \left(b_i N_i(n) - \frac{n+1}{\sum_j 1/b_j} \right)^2 - \left( b_i N_i(n) - \frac{n}{\sum_j 1/b_j} \right)^2 \right) \\
        &= \frac{1}{b_k} \left( 2 \left( b_k N_k(n) - \frac{n}{\sum_j 1/b_j} \right) \left( b_k - \frac{1}{\sum_j 1/b_j} \right) + \left( b_k - \frac{1}{S} \right)^2 \right) \\
        & \qquad + \sum_{i \neq k} \frac{1}{b_i} \left( 2 \left( b_i N_i(n) - \frac{n}{\sum_j 1/b_j} \right) \left( - \frac{1}{\sum_j 1/b_j} \right) + \frac{1}{(\sum_j 1/b_j)^2} \right) \\
        &= -2 \sum_i \left( b_i N_i - \frac{n}{\sum_j 1/b_j} \right) p_i^* + 2 \left( b_k N_k(n) - \frac{n}{\sum_j 1/b_j} \right) + b_k - \frac{2 - p_k^*}{\sum_j 1/b_j} + \sum_{i \neq k} \frac{p_i^*}{\sum_j 1/b_j} \\
        &= 2 \left( b_k N_k(n) - \frac{n}{\sum_j 1/b_j} \right) + b_k - \frac{1}{\sum_j 1/b_j} - 2 \sum_i \left( \frac{N_i}{\sum_j 1/b_j} - \frac{n}{\sum_j 1/b_j} p_i^* \right) \\
        &= 2 \left( b_k N_k(n) - \frac{n}{\sum_j 1/b_j} \right) + b_k - \frac{1}{\sum_j 1/b_j}.
    \end{align*}
    Thus, we can upper bound the one step drift. Denoting $A_i := b_i N_i(n) - \frac{n}{\sum_j 1/b_j}$,
    \begin{align*}
        \E[W_{n+1} \mid N(n)] &\leq \E[e^{a(n) Q_{n+1}/8} \mid N(n)] \\
        &= e^{a(n) Q_n/8} \sum_i p_i(n) e^{(a(n)/8)(2A_i + b_i - 1/\sum_j 1/b_j)}.
        \shortintertext{Using $p_i(n) = \frac{p_i^* e^{-a(n) A_i}}{\sum_j p_j^* e^{-a(n) A_j}}$,}
        &\leq W_n e^{C a(n)} \frac{\sum_i p_i^* e^{-a(n) A_i} e^{(a(n)/4)A_i}}{\sum_j p_j^* e^{-a(n) A_j}}.
        \shortintertext{Apply Lemma~\ref{lem:tilting-bound} with $q_i = p_i^*$, $x_i = A_i$, and $t = a(n)$.}
        &\leq W_n e^{C a(n) - C' \min\{ a(n)^2 Q/\sum_j 1/b_j, 1\}}.
    \end{align*}
    Now we split into two cases: Let $K > 0$ be large enough such that $C'K/\sum_j 1/b_j \geq C + 1$.
    \begin{enumerate}
        \item[(i)] If $Q_n \geq K/a(n)$, then for all large enough $n$, we get
        \begin{align*}
            \E[W_{n+1} \mid N(n)] & \leq W_n e^{C a(n) - C'a(n) K/\sum_j 1/b_j} \\
            &\leq W_n e^{-a(n)} \\
            &\leq W_n \left(1- \frac{a(n)}{2} \right).
        \end{align*}

        \item[(ii)] If $Q_n < K/a(n)$, then for all large enough $n$, we get
        \begin{align*}
            \E[W_{n+1} \mid N(n)] &\leq W_n e^{C a(n)} \\
            &\leq W_n(1 + 2 C a(n)) \\
            &= \left( 1 - \frac{a(n)}{2} \right) W_n + \left( \frac{a(n)}{2} + 2 C a(n) \right) W_n \\
            &\leq \left( 1 - \frac{a(n)}{2} \right) W_n + \left( \frac{1}{2} + 2 C \right) e^{K/8} a(n).
        \end{align*}
    \end{enumerate}
    So we get an overall bound of
    $$\E[W_{n+1} \mid N(n)] \leq \left( 1 - \frac{a(n)}{2} \right) W_n + \left( \frac{1}{2} + 2 C \right) e^{K/8} a(n)$$
    for all sufficiently large $n$, and taking expectations gives
    $$\E[W_{n+1}] \leq \left( 1 - \frac{a(n)}{2} \right) \E[W_n] + \left( \frac{1}{2} + 2 C \right) e^{K/8} a(n)$$
    for all sufficiently large $n$. As before, we get $\sup_n \E[W_n] < \infty$.
\end{proof}

\proofappendixheading{Proof of Theorem~\ref{thm:limit_probabilities} (Pathwise Convergence Rate)}
\proofappendixstatement{Theorem}{For the self-balancing sampler,
\[
\|\mu(n)-p^*\|_{\operatorname{TV}}\xrightarrow{a.s.}0,
\qquad
\|\mu(n)-p^*\|_{\operatorname{TV}}\lesssim \frac{\log n}{n} \sum_{i=1}^d \frac{1}{\beta_i}
\quad\text{a.s.}
\]}
\begin{proof}
    The idea is to show that the log odds ratios of the sampling probabilities never get too large; this will force $\beta_i N_i \approx \beta_j N_j$, which will make $N_i/n \propto 1/\beta_i$. In particular, let
    $$R_{i,j} \coloneqq -\log \frac{p_i(n)}{p_j(n)} + \log \frac{p_i(0)}{p_j(0)} = \beta_i N_i - \beta_j N_j.$$
    It is very unlikely for $R_{i,j}$ to exceed $2\log n$, as the following one-step analysis shows:
    \begin{align*}
        \mathbb P(X_{n+1} = i, R_{i,j}(n) \geq 2 \log n) &= \E[ \mathbb P(X_{n+1} = i, R_{i,j}(n) \geq 2 \log n \mid p(n))] \\
        &= \mathbb E[p_i(n) \mathbf 1_{\{R_{i,j}(n) \geq 2 \log n\}}] \\
        &\leq \E \left[p_j(n) \frac{p_j(0)}{p_i(0)} \frac{1}{n^2} \mathbf 1_{\{R_{i,j}(n) \geq 2 \log n\}} \right] \\
        &\lesssim \frac{1}{n^2}.
    \end{align*}
    Also note that $R_{i,j}$ can only jump by at most $\max_k \beta_k$ at a time. Therefore, 
    \[\P(R_{i,j}(n) \geq 2\log(n) + 2\max_k \beta_k \quad \textrm{i.o.}) \leq \P(X_{n+1} = i, R_{i,j}(n) \geq 2\log(n) \quad \textrm{i.o.}) = 0,\]
    \noindent by the Borel-Cantelli theorem. Applying this argument to $R_{j,i} = -R_{i,j}$, we get $|R_{i,j}(n)| \leq 2 \log n + 2\max_k \beta_k$ all but finitely often with probability one.

    Now analyze the counts in terms of the $R_{i,1}$:
    $$\frac{N_i}{n} = \frac{1}{\beta_i} \cdot \frac{\beta_1 N_1}{n} + \frac{R_{i,1}}{\beta_i n},$$
    and summing over $i$ gives
    $$1 = \sum_{k=1}^d \frac{N_k}{n} = \frac{\beta_1 N_1}{n} \left( \sum_{k=1}^d \frac{1}{\beta_k} \right) + \frac{1}{n} \sum_{k=1}^d \frac{R_{k,1}}{\beta_k},$$
    which we rearrange into
    $$\frac{\beta_1 N_1}{n} = \frac{1}{\sum_{k=1}^d 1/\beta_k} \left(1 - \frac{1}{n} \sum_{k=1}^d \frac{R_{k,1}}{\beta_k} \right).$$
    Plugging this back into the expression for $N_i/n$ gives
    $$\frac{N_i}{n} = \underbrace{\frac{1/\beta_i}{\sum_{k=1}^d 1/\beta_k}}_{p_i^*} \left(1 - \frac{1}{n} \sum_{k=1}^d \frac{R_{k,1}}{\beta_k} \right) + \frac{R_{i,1}}{\beta_i n}.$$
    So the total variation distance is
    \begin{align*}
        \| \mu(n) - p^* \|_{\operatorname{TV}} &= \frac{1}{2} \sum_{i=1}^d \left| \frac{N_i}{n} - p_i^* \right| \\
        &= \frac{1}{2n} \sum_{i=1}^d \left| \frac{R_{i,1}}{\beta_i} - p_i^*\sum_{k=1}^d \frac{R_{k,1}}{\beta_k} \right| \\
        &\lesssim \frac{\log n}{n} \sum_{i=1}^d \frac{1}{\beta_i}.\qedhere
    \end{align*}
\end{proof}

\newpage
\subsection{Section 2.4 Proofs: Optimization Formulation}
\label{appendix:2_4_optimization_proofs}

\proofappendixheading{Proof of Theorem~\ref{thm:general_optimization_problem} (Entropy-Regularized + Squared-Error Optimization)}
\proofappendixstatement{Theorem}{Let $L_\beta(n)=\frac12\sum_{i=1}^d \beta_i(N_i(n)-np_i^*)^2$ and let $D_\beta(q)=\E[L_\beta(n+1)-L_\beta(n)\mid N(n),q]$. Then the optimization problem
\[
\argmax_{q\in\Delta^d}\bigl(H(q)-H(q,p(0))-D_\beta(q)\bigr)
=
\argmin_{q\in\Delta^d}\bigl(\operatorname{KL}(q\|p(0))+D_\beta(q)\bigr)
\]
has unique solution $q_i\propto p_i(0)e^{-\beta_i(N_i(n)+1/2)}$.}

\begin{proof}
    First, expand $D$ to isolate the terms depending on $q$:
    \begin{align*}
        D(q) &= \frac{1}{2}\sum_{i=1}^d \beta_i\E [N_i(n+1)^2 - 2(n+1) p_i^* N_i(n+1) + ((n+1) p_i^*)^2 \\
        &\qquad \qquad \qquad - N_i(n)^2 + 2n p_i^* N_i(n) - (np_i^*)^2  \mid N(n),q ] \\
        &= \frac{1}{2}\sum_{i=1}^d \beta_i[q_i (N_i(n) + 1)^2 + (1-q_i) N_i(n)^2 - N_i(n)^2 \\
        &\qquad \qquad \qquad- 2 p_i^*((n+1)(N_i(n) + q_i) - n N_i(n)) + (2n+1)(p_i^*)^2] \\
        &= \frac{1}{2}\sum_{i=1}^d \beta_i[2q_i N_i(n) + q_i - 2(n+1) p_i^* q_i - 2p_i^* N_i(n) + (2n+1)(p_i^*)^2] \\
        &= \sum_{i=1}^d \left( \beta_i N_i(n) q_i + \frac{1}{2} \beta_iq_i + \frac{2n+1}{2} \beta_i (p_i^*)^2 - \frac{N_i(n)}{\sum_{k=1}^d 1/\beta_k} \right) - \frac{n+1}{\sum_{k=1}^d 1/\beta_k},
    \end{align*}
    so it is equivalent to maximize $H(q) - \sum_{i=1}^d \beta_i N_i(n) q_i + \frac{1}{2} \beta_iq_i$. When all the $\beta_i$ are equal, this just looks like maximizing $H(q) - \beta \langle N(n),q \rangle$.

    Now solve the optimization problem using the Lagrange multiplier $\lambda$. The Lagrangian is
    $$\mathcal L(q) \coloneqq \sum_{i=1}^d \left(- q_i \log q_i + q_i \log p_i(0) - \beta_i N_i q_i - \frac{1}{2} \beta_i q_i + \lambda q_i \right) - \lambda.$$
    Setting $\nabla_q \mathcal L(q) = 0$ gives
    $$0 = -\log q_i - 1 + \log p_i(0) - \beta_i N_i - \frac{\beta_i}{2} + \lambda \qquad \implies \qquad q_i = p_i(0)e^{-\beta_i N_i - \beta_i/2 + \lambda - 1}.$$
    Therefore, $q_i \propto p_i(0)e^{-\beta_i (N_i(n)+1/2)}$, as claimed.
\end{proof}

\proofappendixheading{Proof of Theorem~\ref{thm:instantaneous_optimization} (Instantaneous Optimization)}
\proofappendixstatement{Theorem}{If $L_\beta(x)=\frac12\sum_{i=1}^d \beta_i(x_i-np_i^*)^2$, then
\[
\argmin_{q\in\Delta^d}\Bigl(\langle \nabla L_\beta(N(n)),q\rangle+\operatorname{KL}(q\|p(0))\Bigr)
\]
has unique solution $q_i\propto p_i(0)e^{-\beta_iN_i(n)}$.}
\begin{proof}
    Calculate the gradient:
    \begin{align*}
        \frac{\partial L_\beta}{\partial x_i}(x) &= \beta_i(x_i - n p_i^*).   
    \end{align*}
    so we aim to optimize
    $$\sum_{i=1}^d \beta_i(N_i(n)-np_i^*)q_i + \sum_{i=1}^d q_i \log q_i - q_i \log p_i(0).$$
    Define the Lagrangian
    $$\mathcal L(q) \coloneqq \left(\sum_{i=1}^d \beta_i(N_i(n)-np_i^*)q_i + q_i \log q_i - q_i \log p_i(0) + \lambda q_i\right) - \lambda$$
    with Lagrange multiplier $\lambda$. Setting $\nabla_q \mathcal L(q) = 0$ gives
    \begin{align*}
    0 &= \beta_i(N_i(n)-np_i^*) + \log q_i + 1 - \log p_i(0) + \lambda, \\
    &= \beta_i N_i(n) - \frac{n}{\sum_{j=1}^d 1/\beta_j} + \log q_i + 1 - \log p_i(0) + \lambda,
    \end{align*}
    which we solve to get
    \begin{equation*}
        q_i \propto p_i(0) e^{-\beta_i N_i(n)}.\qedhere
    \end{equation*}
\end{proof}

\proofappendixheading{Proof of Theorem~\ref{thm:stochastic_mirror_descent} (Stochastic Mirror Descent)}
\proofappendixstatement{Theorem}{The optimization problem
\[
\argmin_{q\in\Delta^d}\left(h\left\langle \beta_{X_n}e_{X_n}-\frac{1}{\sum_{j=1}^d\beta_j^{-1}}\mathbf 1,\;q\right\rangle+\operatorname{KL}(q\|p(n-1))\right)
\]
has unique solution $q=p(n)$, i.e. $q_i\propto p_i(0)e^{-h\beta_iN_i(n)}$.}
\begin{proof}
    Since $\langle \mathbf 1,q \rangle = 1$, it is equivalent to minimize
    $$h \langle \beta_{X_n} e_{X_n},q \rangle + \operatorname{KL}(q \| p(n-1)) = h \beta_{X_n} q_{X_n} + \sum_{i=1}^d q_i \log q_i - q_i \log p_i(n-1).$$
    Defining the Lagrangian
    $$\mathcal L(q) \coloneqq h \beta_{X_n} q_{X_n} - \lambda + \sum_{i=1}^d q_i \log q_i - q_i \log p_i(n-1) + \lambda q_i$$
    with Lagrange multiplier $\lambda$, the condition $\nabla_q \mathcal L(q) = 0$ gives
    $$0 = \begin{cases}
        h \beta_i + \log q_i + 1 - \log p_i(n-1) + \lambda &\text{if $X_n=i$}, \\
        \log q_i + 1 - \log p_i(n-1) + \lambda &\text{if $X_n \neq i$}.
    \end{cases}$$
    Solving for $q_i$ gives
    $$q_i \propto \begin{cases}
        p_i(n-1) e^{-h\beta_i} &\text{if $X_n = i$}, \\
        p_i(n-1) &\text{if $X_n \neq i$},
    \end{cases}$$
    which is precisely the update rule for the self-balancing sampler.
\end{proof}

\newpage
\subsection{Section 2.5 Proofs: Expected Cover Time}
\label{appendix:2_5_expected_cover_time}

\proofappendixheading{Proof of Theorem~\ref{thm:expected_cover_time} (Expected Cover Time)}
\proofappendixstatement{Theorem}{If $C=\max_{1\le i\le d}\min\{n\ge 1:X_n=i\}$ and $p_i(0)=1/d$, then
\[
\E[C]=\sum_{i=1}^d \frac{\log(1+\beta_i\log d)}{\beta_i}+O(d).
\]}
\begin{proof}
    Writing $C = C_1 + \cdots + C_d$, where $C_i \coloneqq |\{ 1 \leq n \leq C : X_n = i\}|$ is the number of times we see outcome $i$ before covering all the outcomes, we need only show 
    $$\E[C_i] = \frac{\log(1 + \beta_i \log d)}{\beta_i} + O(1).$$
    \noindent The key insight is to embed the self-balancing sampler into a continuous time stochastic process with the sampling order determined by independent exponential clocks. For $1 \leq i \leq d$ and $j \geq 0$, let $W_{i,j} \sim \operatorname{Exp}(\alpha_i^j)$ be independent, and let $Y_{i,j} \coloneqq \sum_{k=0}^j W_{i,k}$. Then the order of the $Y_{i,j}$ and their associated $i$ indices has the same distribution as the order of the $X_n$, e.g.\ $\P(Y_{1,0} < Y_{1,1} < Y_{6,0} < \cdots) = \P(X_1 = 1, X_2 = 1, X_3 = 6, \dots)$. Letting 
    
    $$\widetilde C_i(t) \coloneqq \min \{ j \geq 0 : Y_{i,j} > t \},$$
    
    \noindent be the number of times index $i$ is sampled by time $t$, we claim
    $$\frac{\log(1 + \beta_i t)}{\beta_i} \leq \E[\widetilde C_i(t)] \leq \frac{\log(1 + \beta_i t)}{\beta_i} + 1.$$

    The lower bound is equivalent to $1 + \beta_i t \leq e^{\beta_i \E[\widetilde C_i(t)]}$, which follows from
    \begin{align*}
        e^{\beta_i \E[\widetilde C_i(t)]} &= 1 + \int_0^t \frac{d}{du}e^{\beta_i \E[\widetilde C_i(u)]} \Bigg|_{u=s} \, ds \\
        &= 1 + \beta_i \int_0^t \frac{d}{du}\E[\widetilde C_i(u)] \Bigg|_{u=s} \cdot e^{\beta_i \E[\widetilde C_i(s)]} \, ds \\
        &= 1 + \beta_i \int_0^t \E[e^{-\beta_i \widetilde C_i(s)}] e^{\beta_i \E[\widetilde C_i(s)]} \, ds \\
        &\geq 1 + \beta_i \int_0^t e^{-\beta_i \E[\widetilde C_i(s)]} e^{\beta_i \E[\widetilde C_i(s)]} \, ds \\
        &= 1 + \beta_i t,
    \end{align*}
    where we used that the derivative of the expectation of a birth process is the expected birth rate. For the upper bound, we have
    \begin{align*}
        e^{\beta_i \E[\widetilde C_i(t)]} &\leq \E[e^{\beta_i \widetilde C_i(t)}] \\
        &= 1 + \int_0^t \frac{d}{du} \E[e^{\beta_i \widetilde C_i(u)}] \Bigg|_{u=s} \, ds.
        \shortintertext{By Kolmogorov's forward equation, $\frac{d}{du} \E[e^{\beta_i \widetilde C_i(u)}]$ is the expectation of the generator of the birth process $\widetilde C_i(t)$ applied to the function $f(z) = e^{\beta_i z}$.}
        &= 1 + \int_0^t \E[e^{-\beta_i \widetilde C_i(s)}(e^{\beta_i (\widetilde C_i(s)+1)} - e^{\beta_i \widetilde C_i(s)})] \, ds \\
        &= 1 + \int_0^t e^{\beta_i} - 1 \, ds \\
        &= 1 + (e^{\beta_i} - 1)t.
    \end{align*}
    This gives an upper bound of $\E[\widetilde C_i(t)] \leq \frac{\log(1 + (e^{\beta_i} - 1) t)}{\beta_i} \leq \frac{\log(1 + \beta_i t)}{\beta_i} + 1$.

    Now, we incorporate the random time in the continuous process which corresponds to the cover time in the self-balancing sampler. Let $M_{-i} \coloneqq \max \{ W_{j,0} : j \neq i \}$ be the maximum
    waiting time for all $j \neq i$ to be sampled at least once, and note
    $$C_i \overset{d}{=} \max \{ 1, \widetilde C_i(M_{-i}) \},$$ 
    \noindent so that
    $$\E[C_i] = \E[\widetilde C_i(M_{-i})] + \underbrace{\P(W_{i,0} > W_{j,0} \: \forall j \neq i)}_{=1/d}.$$
    Applying our previous inequalities gives
    $$\left|\E[C_i] - \E \left[\frac{\log(1 + \beta_i M_{-i})}{\beta_i}  \right] \right| \leq 1 + \frac{1}{d},$$
    and all that remains now is to show that $\E[\widetilde C_i(M_{-i})] \approx \E[\widetilde C_i(\log d)]$, where $\E[M_{-i}] = \sum_{k=1}^{d-1} \frac{1}{k} \approx \log d$. We have
    \begin{align*}
        \left|\E[C_i] - \frac{\log(1 + \beta_i \log d)}{\beta_i} \right| &\leq \left|\E[C_i] - \E \left[\frac{\log(1 + \beta_i M_{-i})}{\beta_i}  \right] \right| \\
        &\qquad \qquad + \left|\E \left[\frac{\log(1 + \beta_i M_{-i})}{\beta_i}  \right]  - \frac{\log(1 + \beta_i \log d)}{\beta_i} \right|.
        \intertext{The latter term can be bounded using the fact that $g(x) \coloneqq \frac{\log(1 + \beta_i x)}{\beta_i}$ is $1$-Lipschitz, which follows from $|g'(x)| = |\frac{1}{1+\beta x}| \leq 1$ for all $x \geq 0$.}
        &\leq 1 + \frac{1}{d} + \E[|M_{-i} - \log d|] \\
        &\leq 1 + \frac{1}{d} + \sqrt{\E[(M_{-i} - \log d)^2]} \\
        &= 1 + \frac{1}{d} + \sqrt{(\E[M_{-i} - \log d])^2 + \Var(M_{-i})} \\
        &= 1 + \frac{1}{d} + \sqrt{\left(\log d - \sum_{k=1}^{d-1} \frac{1}{k}\right)^2 + \sum_{k=1}^{d-1} \frac{1}{k^2}} \\
        &\leq 1 + \frac{1}{d} + \sqrt{1 + \frac{\pi^2}{6}}.\qedhere
    \end{align*}
\end{proof}

\newpage
\subsection{Section 3.1 Proofs: Diffusion Approximations}
\label{appendix:3_1_diffusion_limit_proofs}

\proofappendixheading{Proof of Proposition~\ref{prop:projection_centered_diffusion_limit} (Projection-Centered Diffusion Limit)}
\proofappendixstatement{Proposition}{Fix $i\in[d]$. If $Z_0^h\to z_0$, then $Z^h\Rightarrow Z$ in $D([0,T],\R)$ for each $T<\infty$, where $Z$ solves
\[
dZ_t=-\frac{\beta_i\beta_d}{\beta_i+\beta_d}Z_t\,dt+\sqrt{\beta_i\beta_d}\,dW_t,
\qquad Z_0=z_0.
\]}

\begin{proof}
For convenience, drop the subscript on $c_\beta$ and let $q_i := p_i(0)$. Note:
\[\frac{p_i^{\beta h}}{p_i^{\beta h} + p_d^{\beta h}} = \frac{q_ie^{-\beta_i h N_i}}{q_ie^{-\beta_i h N_i} + q_de^{-\beta_d h N_d}} = \frac{e^{-Y_i^{\beta h}}}{q_d/q_i + e^{-Y_i^{\beta h}}}\]
\noindent Since $Z_{nh}^h = z$ implies $Y^{h\beta}_i(n) = z \sqrt{h} + c_\beta$, we have increments:
\[\Delta Z_{nh}^h :=  Z_{(n+1)h}^h - Z_{nh}^h \mid \{Z_{nh}^h = z\} = \begin{cases}
    \beta_i\sqrt{h} & \text{w.p. } \frac{e^{-( z\sqrt{h} + c) }}{q_d/q_i + e^{-( z\sqrt{h} + c)}}, \\[5pt]
    -\beta_d\sqrt{h} & \text{w.p. } \frac{q_d/q_i}{q_d/q_i + e^{-( z\sqrt{h} + c)}}, \\ 
\end{cases}\]
\noindent so since $e^{-c} = \frac{\beta_dq_d}{\beta_iq_i}$, our process has drift:
\begin{align*}
    \mu_h(z) & := \E [ \Delta Z_{nh}^h \mid Z_{nh}^h = z]\\
    & = \frac{\sqrt{h}}{q_d/q_i + e^{- (z \sqrt{h} + c)}}\left[\beta_i e^{- (z \sqrt{h} + c)} - \beta_d q_d/q_i\right] \\
    & = \frac{\beta_d\sqrt{h}}{1 + \frac{\beta_d}{\beta_i} e^{- z \sqrt{h}}} \left[e^{- z \sqrt{h}} - 1\right].
\end{align*}
\noindent Hence:
\[\mu_h(z) = -\frac{\beta_d }{1 + \frac{\beta_d}{\beta_i}} zh + O(z^2h^{3/2}) = -\frac{z}{\beta_d^{-1} + \beta_i^{-1}} h + O(z^2h^{3/2}).\]
\noindent Hence $h^{-1} \mu_h(z)$ converges uniformly for $z$ in a compact set. Similarly, for the variance
\begin{align*}
    \sigma^2_h(z) & := \E [ (\Delta Z_{nh}^h)^2 \mid Z_{nh}^h = z] - \mu_h(z)^2 \\[5pt]
    & = \frac{h}{q_d/q_i + \frac{\beta_dq_d}{\beta_iq_i}e^{-z \sqrt{h}}} \left[\beta_i^2 \cdot \frac{\beta_dq_d}{\beta_iq_i} e^{- z \sqrt{h}} + \beta_d^2 \cdot \frac{q_d}{q_i}\right] + O(z^2h^2)\\[5pt]
    & = \frac{\beta_i\beta_d + \beta_d^2}{1 + \frac{\beta_d}{\beta_i}}h + O(z^2h^{3/2})\\[5pt]
    & = \beta_i \beta_d h +O(z^2h^{3/2}).
\end{align*}

\noindent Again $h^{-1} \sigma_h(z)$ converges uniformly for $z$ in a compact set. Lastly, since
    \[|\Delta Z_{nh}^h|~\leq \max(\beta_i, \beta_d) \sqrt{h} \to 0,\]
\noindent we have no large jumps. Namely, for every $\epsilon > 0$
    \[\P(|\Delta Z_{nh}^h| > \epsilon \mid Z_{nh}^h = z) = o(h), \]
independently of $z$, as this probability is zero for sufficiently small $h$. As the diffusion
\[dZ_t = -\frac{\beta_i \beta_d}{\beta_i + \beta_d} Z_t dt + \sqrt{\beta_i\beta_d} \cdot dW_t, \quad Z_0 = z_0,\]
has globally Lipschitz coefficients, there exists a unique strong solution, namely just the OU process. Standard theory shows the desired convergence. See for example Section 8.7 of \cite{durrett1996stochastic}. 
\end{proof}

\proofappendixheading{Proof of Proposition~\ref{prop:mean_centered_diffusion_limit} (Mean-Centered Diffusion Limit)}
\proofappendixstatement{Proposition}{If $Z_0^h\to z_0$, then $Z^h\Rightarrow Z$ as $h\downarrow 0$, where
\[
dZ_t=-\frac{1}{S_\beta}Z_t\,dt+\Sigma^{1/2}dW_t,
\quad
S_\beta=\sum_j\beta_j^{-1}, \quad D_\beta = \operatorname{diag}(\beta)
\quad
\Sigma=S_\beta^{-1}D_\beta-S_\beta^{-2}\mathbf 1\mathbf 1^T,
\]
and the limiting diffusion is supported on $\{z:\langle z,p^*\rangle=0\}$.}

\begin{proof}
For ease, drop the subscript of $c_\beta$ and let $q_i := p_i(0)$. Since $\beta_i p_i^*$ is independent of $i$
\[p^{h\beta}_i = \frac{q_ie^{-h\beta_i N_i}}{\sum_j q_je^{-h\beta_j N_j}} = \frac{q_ie^{-M_i^{h \beta}}}{\sum_j q_je^{-M_j^{h\beta}}},\]
\noindent so the transitions are a function of our mean-centered counts alone. The value $c_\beta$ is chosen so that $e^{- c_i} = \kappa (q_i\beta_i)^{-1}$ for $\kappa > 0$ independent of $i$, and so $\langle c_\beta, p^* \rangle = 0$ satisfies the same constraint as $M^\beta$. Since $Z_{nh}^h = z$ implies $M^{\beta h}_n = z\sqrt{h} + c$, note for standard basis vector $e_i$
    \[\P\left(|\Delta Z_{nh}^h| =  \sqrt{h} D_\beta(e_i - p^*) \right) = \frac{q_ie^{- (z_i \sqrt{h} + c_i)} }{\sum_{j=1}^d q_je^{- (z_j \sqrt{h} + c_j)}} = \frac{e^{- z_i \sqrt{h}} }{\sum_{j=1}^d \frac{1}{\beta_j}e^{- z_j \sqrt{h} }}.\]
Hence this process has drift
\begin{align*}
    (\mu_h(z))_i & := \E [(\Delta Z_{nh}^h)_i \mid Z^h_{nh} = z] \\[5pt]
    & = \sqrt{h} \left[\frac{e^{- z_i \sqrt{h}} }{\sum_{j=1}^d \frac{1}{\beta_j}e^{- z_j \sqrt{h} }} - \beta_ip_i^*\right]\\[5pt]
    & = \sqrt{h} \left[\frac{1- z_i \sqrt{h}}{\sum_{j=1}^d \frac{1}{\beta_j}(1 - z_j \sqrt{h})} - \beta_ip_i^* + O(h)\right]\\[5pt]
    & = \sqrt{h} \left[\frac{\left(\sum_{j=1}^d p_j^*z_j-z_i\right)\sqrt{h}}{\sum_{j=1}^d \frac{1}{\beta_j}(1- z_j \sqrt{h})}  + O(h)\right]\\[5pt]
    & = \frac{1}{\sum_{j=1}^d \frac{1}{\beta_j}} \left(\sum_{j=1}^d p_j^*z_j-z_i\right) h + o(h)
\end{align*}
\noindent As above, it is easy to see the convergence is uniform for $z$ in compact sets. Similarly
\begin{align*}
     \E [(\Delta Z_{nh}^h)_i^2 \mid Z_{nh}^h = z ] & = h \beta_i^2 \left[(p_i^*)^2 + \left(1-2p_i^* \right) \cdot \frac{\frac{1}{\beta_i}e^{- z_i \sqrt{h}}}{\sum_{j=1}^d \frac{1}{\beta_j}e^{- z_j \sqrt{h}}}\right]\\[5pt]
     & = h\beta_i^2 \left[(p_i^*)^2 + \left(1-2p_i^* \right) \cdot \left(p_i^* + O(\sqrt{h})\right)\right]\\[5pt]
     & = h \beta_i^2  p_i^*(1- p_i^*) + o(h)
\end{align*}

\noindent Moreover, for $i \neq j$:
\begin{align*}
     \E [(\Delta Z_{nh}^h)_i (\Delta Z_{nh}^h)_j \mid Z_{nh}^h = z  ] & = h \beta_i \beta_j\left[p_i^*p_j^* -  \left(\frac{\frac{p_i^*}{\beta_j}e^{- z_j \sqrt{h}} + \frac{p_j^*}{\beta_i}e^{-  z_i \sqrt{h}}}{\sum_{\ell = 1}^d \frac{1}{ \beta_\ell}e^{- z_\ell\sqrt{h}}}\right)\right] \\[5pt]
     & = h \beta_i \beta_j\left[p_i^*p_j^* - \left(\frac{\frac{p_i^*}{\beta_j} + \frac{p_j^*}{\beta_i}-O(\sqrt{h})}{\sum_{\ell=1}^d \frac{1}{\beta_\ell}-O(\sqrt{h})}\right)\right]\\[5pt]
     & = - \beta_i \beta_j p_i^*p_j^*h + o(h).
\end{align*}
\noindent Thus we have the desired covariance. As the coefficients of the limiting diffusion are globally Lipschitz, the associated SDE has a unique strong solution. Again standard theory, such as Section 8.7 of \cite{durrett1996stochastic}, gives the desired convergence result. This gives a limiting diffusion
\[dZ_t = -\frac{1}{S_\beta}(Z_t - \bar{Z_t}) dt + \Sigma^{1/2} dW_t, \qquad \bar{Z}_t := \langle p^*, Z_t \rangle.\]
\noindent Lastly, we show that the limiting diffusion lives in the hyperplane
\[Z_t \in \{z : \langle z, p^* \rangle = 0\}.\]
\noindent Note that by construction
\[\langle Y^\beta(n), p^*\rangle = \langle c_\beta, p^* \rangle = 0.\]
\noindent and hence $\langle Z^h_t, p^*\rangle = 0$ for all $h$ and $t$, from which the result follows. In particular, $\bar{Z}_t = 0$, so our above diffusion simplifies to the desired form.
\end{proof}

\newpage 
\subsection{Section~\ref{sec:adversarial_interpretation} Proofs: Adversarial Error-Rate}
\label{appendix:3_3_adversarial_error_rate_proof}

\proofappendixheading{Proof of Lemma~\ref{lem:2d_stationary_project_centered} ($d=2$ Stationary Distribution)}
\proofappendixstatement{Lemma}{When $d=2$ and $\beta_1=\beta_2=\beta>0$, the projection-centered chain $\tilde Y(n)$ is reversible on $\Z$ with stationary distribution
\[
\pi(z)\propto \frac{p_2(0)+p_1(0)e^{-\beta z}}{p_1(0)+p_2(0)}
\left(\frac{p_1(0)}{p_2(0)}\right)^z
e^{-\beta {z \choose 2}}.
\]}
\begin{proof}
    In the $d=2$ case then $\tilde{Y}_n$ is a simple birth-death chain with birth probability:
\[\lambda_n = \frac{p_1(0)e^{-\beta n}}{p_2(0) + p_1(0) e^{-\beta n}}\]
    \noindent and death otherwise. The detailed balance equations then imply:
    \[\frac{\pi(n)}{\pi(n-1)} = \frac{\lambda_{n-1}}{1-\lambda_{n}}.\]
    \noindent Hence:
    \[\pi(n) = \pi_0 \prod_{j=1}^n \frac{\lambda_{j-1}}{1-\lambda_j} = \pi_0 \frac{p_2(0) + p_1(0)e^{-\beta n}}{p_1(0) + p_2(0)} \left(\frac{p_1(0)}{p_2(0)}\right)^n e^{-\beta {n \choose 2}}\]
    \noindent If $\beta > 0$ then this is normalizable as it has discrete Gaussian tails.
\end{proof}

\proofappendixheading{Proof of Lemma~\ref{lem:small_beta_adversarial} ($d=2$ Adversarial Error-Rate)}
\proofappendixstatement{Lemma}{For $d=2$, and uniform $\beta$, as $\beta \to 0$:
\[
\limsup_{n\to\infty}\frac1n\sum_{k=0}^{n-1}\P\!\left(X_k\neq \arg\min_{i=1,2}N_i(k-1)\right)
=\frac12-\Theta(\sqrt{\beta}).
\]}
\begin{proof}
    Note in the $d=2$ case with uniform $\beta$, we have:
    \[\limsup_{n \to \infty} \frac{1}{n}\sum_{k=0}^{n-1}\P\!\left(X_k \neq \argmax_{i=1,2} \pi_i(N(k-1))\right) = \limsup_{n \to \infty} \frac{1}{n}\sum_{k=0}^{n-1} \E_{y \sim \tilde{Y}(k)} \left[\frac{1}{1+e^{\beta |y|}}\right].\]
    \noindent Since $Y(n)$ is ergodic, the ergodic theorem and Lemma \ref{lem:2d_stationary_project_centered} imply:
    \[\E_{y \sim \pi}\left[ \frac{1}{1+e^{\beta |y|}} \right]  = Z^{-1}\sum_{y \in \Z} \frac{\cosh\left(\frac{\beta y}{2}\right)}{1 + e^{\beta |y|}} \cdot e^{-\frac{\beta y^2}{2}} = \frac{1}{2Z}\sum_{y \in \Z}  e^{-\frac{\beta (y^2 + |y|)}{2} }.\]
    \noindent for $Z = \sum_{z \in \Z} \cosh(\beta z/2) \cdot e^{-\beta z^2/2}$. While no simple closed-form exists, the simple bound:
    \[\frac{1}{2} -\frac{x}{4}  \leq (1 + e^x)^{-1} \leq \frac{1}{2} -\frac{x}{4} + \frac{x^3}{48}, \quad \forall x \geq 0,\]
    \noindent implies for small $\beta$ our error-rate is of order:
    \[\E_{y \sim \pi}\left[ \frac{1}{1+e^{\beta |y|}} \right] = \frac{1}{2} - \frac{\beta}{4} \E_\pi |Y| + O\left(\beta^3 E_\pi |Y|^3\right).\]
    \noindent To control this, note the stationary distribution of $Y$ is a mixture of two (discrete) Gaussians
    \[\cosh\left(\frac{\beta y}{2} \right) e^{-\beta y^2/2} \propto \frac{1}{2} \left[e^{-\frac{\beta}{2}(y-1/2)^2} + e^{-\frac{\beta}{2}(y+1/2)^2} \right].\]
    \noindent centered at $\pm \frac{1}{2}$ with variance $\beta^{-1}$. For small $\beta$, the standard Riemann sum comparison implies the distribution of $Y$ is well-approximated by the continuous counterpart. Hence $Y \approx \frac{\xi}{2} + \beta^{-1/2}G$ for a standard discrete gaussian $G$ and Rademacher $\xi \sim \mathrm{Unif}\{\pm 1\}$.
    
    We can then estimate $\E |Y|$ and $\E |Y|^3$ using the standard moments of the folded normal. In particular, as $\beta \downarrow 0$ the gaussian portion $G$ dominates and we get the moments of the half-normal distribution:
    \[\E |Y| \asymp \sqrt{\frac{2}{\pi \beta}}, \qquad \E |Y|^3 = O(\beta^{-3/2}).\]
    \noindent Combining this with the above yields:
    \[\E_{y \sim \pi}\left[ \frac{1}{1+e^{\beta |y|}} \right] = \frac{1}{2} - c \sqrt{\beta} + O(\beta^{3/2}) = \frac{1}{2} - \Theta(\sqrt{\beta}).\]
    \noindent as desired.
 \end{proof}

\proofappendixheading{Proof of Proposition~\ref{prop:average_case_optimal_control}}
\proofappendixstatement{Proposition}{Suppose $\pi_i(x)\propto a_i e^{-H_i(x)}$, that $\pi(x)=\pi(x')$ whenever $(I-p^*1^T)x=(I-p^*1^T)x'$, and that there exist $c,C>0$ such that for all $y$ with $\sum_i y_i=0$ and $y_j\le y_i$,
\[
H_i(y)-H_j(y)\ge c(y_i-y_j)-C.
\]
Then the empirical distribution associated to $\pi$ satisfies $\E\|\mu_n-p^*\|_{\operatorname{TV}}=O(n^{-1})$.}

\begin{proof}
    Let \(Y(n):=N(n)-np^*\) and \(W_n:=\|Y(n)\|_1\). By the invariance assumption,
        \[\pi(N(n))=\pi(Y(n)),\]
    so the transition probabilities may be viewed as functions of the centered counts. Define
    \[h_i(u):=|u-p_i^*|-|u|,\qquad g_i(u):=|u+1-p_i^*|-|u-p_i^*|.\]
    Then
        \[\E[W_{n+1}-W_n\mid Y(n)=y]=\sum_i h_i(y_i)+\sum_i\pi_i(y)g_i(y_i).\]
    Let \(p_{\min}^*:=\min_i p_i^*\) and \(p_{\max}^*:=\max_i p_i^*\). Since
    \[\sum_{i:y_i>0}y_i=\frac{\|y\|_1}{2},\]
    whenever \(\|y\|_1\ge 2dp_{\max}^*\), some \(y_j\ge p_j^*\), and hence
        \[\sum_i h_i(y_i)\le 1-2p_{\min}^*.\]
    Now let \(m:=\min_i y_i\) and \(S:=\{i:y_i>-1\}\). Since \(\sum_i y_i=0\),
        \[m\le -\frac{\|y\|_1}{2d}.\]
    Choosing $j$ such that $y_j=m$, for $i \in S$ the assumption on $H$ gives
    \[\pi_i(y)\le \frac{a_{\max}}{a_{\min}}e^{-c(y_i-m)+C}\le\frac{a_{\max}}{a_{\min}} e^{-c\|y\|_1/(2d)+C+c}.
    \]
    Therefore
    \[\sum_{i\in S}\pi_i(y)\le d\frac{a_{\max}}{a_{\min}}e^{-c\|y\|_1/(2d)+C+c} =:L(\|y\|_1).\]
    Since $g_i(y_i)=-1$ for $i\notin S$ and $g_i(y_i)\le 1$ otherwise,
    \[\sum_i\pi_i(y)g_i(y_i)\le 2L(\|y\|_1)-1.\]
    Thus, outside a sufficiently large compact set,
    \[
        \E[W_{n+1}-W_n\mid Y(n)=y]\le 2L(\|y\|_1)-2p_{\min}^*\le -p_{\min}^*.
    \]
    Finally,
    \[
        |W_{n+1}-W_n|
        \le
        \|e_{X_{n+1}}-p^*\|_1
        \le 2.
    \]
    Lemma~\ref{lem:drift_lemma} therefore implies
    \[
        \sup_n \E\|Y(n)\|_1<\infty.
    \]
    Since
    \[
        \|\mu_n-p^*\|_{\operatorname{TV}}=\frac{\|Y(n)\|_1}{2n},
    \]
    the result follows.
\end{proof}

\end{document}